 \newtheorem{thm}{Theorem}[subsection]
 \newtheorem{cor}[thm]{Corollary}
 \newtheorem{prop}[thm]{Proposition}
 \theoremstyle{definition}
 \newtheorem{defn}[thm]{Definition}
 \theoremstyle{remark}
 \newtheorem{rem}[thm]{Remark}
 \newtheorem{ejem}[thm]{Example}
 \newtheorem{ejems}[thm]{Examples}
\numberwithin{equation}{section}
\newcommand{\OO}{{\mathcal O}}
\newcommand{\M}{{\mathcal M}}
\newcommand{\A}{{\mathcal A}}
\newcommand{\U}{{\mathcal U}}
\newcommand{\PP}{{\mathcal P}}
\newcommand{\Nc}{{\mathcal N}}
\newcommand{\KK}{{\mathbb K}}
\newcommand{\ZZ}{{\mathbb Z}}
\newcommand{\Pj}{{\mathbb P}}
\newcommand{\Lc}{{\mathcal L}}
\newcommand{\E}{{\mathcal E}}
\newcommand{\Q}{{\mathcal{Q}}}
\newcommand{\G}{{\mathbb G}}
\newcommand{\GL}{{\mathbb G}{\mathrm L}}
\newcommand{\bGL}{{\mathbf {GL}}}
\newcommand{\AAA}{{\mathbb A}}
\newcommand{\bA}{{\mathbf {A^{1}}}}
\newcommand{\bAn}{{\mathbf {A^{n}}}}
\newcommand{\bAalg}{{\mathbf {A^{1}_{\text{\rm alg}}}}}
\newcommand{\bAalgn}{{\mathbf {A^{n}_{\text{\rm alg}}}}}
\newcommand{\bAtop}{{\mathbf {A^{1}_{\text{\rm top}}}}}
\newcommand{\bAtopn}{{\mathbf {A^{n}_{\text{\rm top}}}}}
\DeclareMathOperator{\Spec}{{Spec}}
\DeclareMathOperator{\Qcoh}{{Qcoh}}
\DeclareMathOperator{\SSpec}{{\mathbb{S}pec}}
\DeclareMathOperator{\Proj}{{Proj}}
\DeclareMathOperator{\Grass}{{Gr}}
\newcommand{\HHom}{{\mathbb H}{\mathrm {om}}}
\newcommand{\EEpim}{{\mathbb E}{\mathrm {pim}}}
\DeclareMathOperator{\limi}{{lim}}
\newcommand{\ilim}[1]{\,\underset{#1}{\underset{\rightarrow}{\limi}}\,}
\DeclareMathOperator{\Ker}{{Ker}} 
\DeclareMathOperator{\Hom}{{Hom}}
\DeclareMathOperator{\Epim}{{Epim}}
\DeclareMathOperator{\Aut}{{Aut}}
\DeclareMathOperator{\Open}{{Open}}
\DeclareMathOperator{\Cov}{{Cov}}
\DeclareMathOperator{\Maps}{{Maps}}
\DeclareMathOperator{\id}{{id}}
\begin{document}

\title[Universal ringed spaces]
 {Universal ringed spaces}

\author{ J. S\'anchez Gonz\'alez}
\address{Departamento de Matem\'{a}ticas, Universidad de %Salamanca,
Plaza de la Merced 1-4, 37008 Salamanca, Spain}
\email{javier14sg@usal.es}

\author{ F. Sancho de Salas}

\address{Departamento de Matem\'{a}ticas and Instituto Universitario de F\'isica Fundamental y Matem\'aticas (IUFFyM), Universidad de %Salamanca,
Plaza de la Merced 1-4, 37008 Salamanca, Spain}
\email{fsancho@usal.es}

\subjclass[2020]{14A20,  05E99, 06A11, 13F55 }
\keywords{ringed space, finite poset, affine space, projective space, grassmannian}

\thanks {The  authors were supported by research project MTM2017-86042-P (MEC)}

%\thanks{The author was also supported in part by the Research
% Council of Slovenia.}

%\commby{Daniel J. Rudolph}

%%% ----------------------------------------------------------------------

\begin{abstract} We construct affine spaces, projective spaces and grassmannians in the ca\-te\-gory of ringed spaces. We show how finite posets and sheaves of rings on them appear in a natural way.
\end{abstract}

%%% ----------------------------------------------------------------------
\maketitle
%%% ----------------------------------------------------------------------

\section*{Introduction}

Finite ringed spaces are ringed spaces with a particularly simple behaviour, and they have been proved to be useful in the study of other ringed spaces, specially schemes. The use of sheaves over finite posets to describe certain sheaves on general spaces is not a new idea, see for example \cite{DGM}. Certain finite ringed spaces appear also in the algebraic combinatorics literature (see for example \cite{BBFK,Karu} and the references therein). A more systematic treatment of finite ringed spaces and their applications to schemes or other ringed spaces may be found in  \cite{Sancho,Sancho2,Sancho3,Sancho4}. Other formulations in terms of finite quivers and their representations may be found in \cite{EstradaEnochs}. In this paper, rather than using finite ringed spaces as a tool for the study of other spaces, we show that they play their own role in the category of ringed spaces and provide a framework where distinct algebro-geometric and topological methods  do interplay. Let us be more precise.

Our  main aim  is to construct, in the category of ringed spaces, some standard objects such as affine spaces, projective spaces or grassmannians. By this we mean constructing ringed spaces that have analogous universal properties to those of the affine, projective or grassmannian schemes (or differentiable manifolds or analytic spaces). It turns out that these ringed spaces are finite ringed spaces. Let us  motivate why these objects are of some interest.

{\em Ringed spaces versus locally ringed spaces}. The category of ringed spaces shares certain aspects with that of locally ringed spaces. Both of them contain, in a fully faithful way, the category of rings (or the category of affine schemes), but there are also serious differences between them. From out point of view, the category of ringed spaces has two advantages over the category of locally ringed spaces. The first one is that fiber products are very natural and easy to describe: the underlying topological space is the ordinary fiber product of topological spaces and the sheaf of rings is  the tensor product of their corresponding sheaves of rings. On locally ringed spaces, fiber products are much more difficult (see \cite{Gillam}); even for schemes, their existence   is not immediate. The second and major advantage is that the category of ringed spaces contains, fully faithfully, that of topological spaces, since any topological space may be considered as a ringed space by taking the constant sheaf $\ZZ$ as the sheaf of rings. Many general concepts or constructions on ringed spaces are significant at the topological level. Just to give an example, on any ringed space $(S,\OO_S)$ one can consider the category of quasi-coherent modules; in the topological case (i.e., $\OO_S=\ZZ$) this category is (under mild topological  hypothesis) the category of representations of the fundamental group of $S$. As a general philosophy, ringed spaces are a framework where techniques  from both algebraic geometry (schemes) and topology come to interplay. On the contrary, the category of topological spaces is not a fully faithful subcategory of that of locally ringed spaces. Despite all this, one may argue that the main geometric spaces (as affine or projective spaces) are locally ringed spaces (in their different contexts: algebraic, differentiable, analytic) and that they represent  well described functors (functions, invertible quotients), where morphisms are those of locally ringed spaces. Our aim is to show that there exist natural ringed spaces that deserve the name of affine space or projective space.

In order to illustrate this, let us focus our attention by the moment on the projective $k$-scheme $\Pj^n_{k-\text{sch}}:=\Proj k[x_1,\dots,x_{n+1}]$, where $k$ is a ring. This scheme represents the functor, over the category of $k$-schemes, of invertible quotients of the free module of rank $n+1$; moreover, it is not difficult to prove that it represents this functor over the category of all locally ringed spaces over $k$. Furthermore, this functor still makes sense over the category of all ringed spaces over $k$, hence we may ask if there is  a ringed space $\Pj^n_k$ representing this functor. The answer is no, and this could convince someone of the advantage of the category of locally ringed spaces over that of ringed spaces; but  there is a slight and natural modification of the functor that makes it representable. For this,  let us  first see at a glance  why one cannot expect this functor to be representable on ringed spaces. Assume that $S$ is a ringed space representing the functor of invertible quotients of the free module of rank $n+1$. Then, there would exist the universal quotient $\epsilon\colon \OO_S^{n+1}\to\Lc$, which may be thought as $n+1$ sections $s_i$ of $\Lc$. Each of these sections define an open subset $U_{s_i}$ of $S$ (the open subset where $s_i$ is invertible). Now let $h\colon S'\to S$ be a morphism of ringed spaces, which should be equivalent to the invertible quotient $h^*(\epsilon)\colon \OO_{S'}^{n+1}\to h^*\Lc$ given by the sections $h^*(s_i)$. We have two collections of open subsets on $S'$. On the one hand, the open subsets $U_{h^*(s_i)}$; on the other hand, the open subsets $h^{-1}(U_{s_i})$. If $h$ is a morphism of locally ringed spaces, these two coincide,  i.e., $U_{h^*(s_i)}=h^{-1}(U_{s_i})$; but if $h$ is just a morphism of ringed spaces, one only has the inclusion $h^{-1}(U_{s_i})\subseteq U_{h^*(s_i)}$; this means that the open subsets $h^{-1}(U_{s_i})$ are not determined by the sections $h^*(s_i)$ and thus $h$ cannot be equivalent to $h^*(\epsilon)$. This indicates how to modify the functor in order to expect representability: we have to add a topological datum  ($n+1$ open subsets) to the algebraic datum given by the invertible quotient. To be more precise, one must consider the functor $F$ on ringed spaces defined as follows: $F(S)$ is the set of couples $([\Lc],\U)$ where $[\Lc]$ is an invertible quotient of $\OO_S^{n+1}$, given by $n+1$ sections $s_i$ of $\Lc$, and $\U=\{ U_1,\dots,U_{n+1}\}$ is a covering of $S$ satisfying $U_i\subseteq U_{s_i}$. This functor {\em is representable} by a ringed space $\Pj^n_k$, which is a ¡finite ringed space!, i.e., the underlying topological space is finite (a finite poset). Moreover, one has a natural morphism of ringed spaces $\pi\colon \Pj^n_{k-\text{sch}}\to\Pj^n_k$ and the induced morphism $\pi^*\colon \Qcoh(\Pj^n_k)\to \Qcoh(\Pj^n_{k-\text{sch}})$ between the categories of quasi-coherent modules is an {\em equivalence}. Finally, the non-representable original functor $F'$ (of invertible quotients) is $\pi_0$-representable, in the sense that $F'(S)$ is identified with the set of connected components of $F(S)$ (this set has a natural partial order, hence a topology).
 
An analogous digress can be made in order to construct ringed spaces $\AAA^n_k$, $\AAA^n_k \hskip -1pt {\scriptstyle -\{0\}}$, $\Grass_k(r,n)$ that have analogous behaviour in the category of ringed spaces to that of the affine scheme, the punctured affine scheme or the grassmannian scheme in the category of schemes (or locally ringed spaces). All these ringed spaces have, as underlying topological space, either $\PP_n=$ poset of subsets of a finite set $\Delta_n=\{1,\dots,n\}$, or the poset $\PP_n^*$ of non-empty subsets of $\Delta_n$. Taking into account that $\PP_n$ is the realm of abstract simplicial complexes (in the sense that  finite abstract simplical complexes are   closed subsets of $\PP_n$), this gives a new light of the interplay between simplicial complexes an algebro-geometric spaces. We see in section \ref{Stanley-Reisner} how Stanley-Reisner theory fits naturally in this context.

Let us give now a more detailed description of the different parts of the paper. Section 1 is devoted to give elementary general facts about ringed spaces, locally ringed spaces, distributive lattices and the space of parts. There are no original results. If there is any proof is just for the reader convenience.

Section 2 is devoted to the affine ringed space that we have denoted $\AAA^n_k$. It also could be named the $n$-dimensional Stanley-Reisner ringed space, since the underlying topological space is $\PP_n$, the standard combinatorial $n-1$-simplex, and the sheaf of rings is given by square-free monomial localizations of the polinomial ring $k[x_1,\dots,x_n]$. We also introduce what we have called the algebraic and the topological $n$-dimensional affine ringed spaces, respectively denoted by $\AAA^n_\text{alg}$ and $\AAA^n_\text{top}$. The concepts underlying these spaces are what we have called algebraic, topological and algebro-topological functions. An algebraic function on a ringed space $(S,\OO_S)$ is just an element of $\OO_S(S)$; a topological function is an open subset of $S$ (equivalenty, a continous map $S\to \PP_1$); an algebro-topological function is a couple $(f,U)$ where $f$ is an algebric function, $U$ an open subset and $f_{\vert U}$ is invertible. This elementary notion plays a central role in the paper. Throughout the paper, an algebro-topological concept  means a couple made of the usual algebraic concept plus a topological datum (usually a covering) related to it. The ringed spaces $\AAA^n_\text{top}, \AAA^n_\text{alg}, \AAA^n_k$ represent, respectively, the functor of $n$ topological, algebraic and algebro-topological functions. The comparison between them and with the  $n$-dimensional affine $k$-scheme is made in subsection \ref{An-comparisons}. We also introduce in subsection \ref{Gm} the multiplicative group ringed space $\G_m$ and its actions on ringed spaces. The following subsection \ref{punctured} is devoted to  the punctured affine ringed space $\AAA^n_k \hskip -1pt {\scriptstyle -\{0\}}$. It represents the functor of $n$ non simultaneouly vanishing algebro-topological functions. It is the first example where we meet the problem that we outlined above for the projective scheme: The functor on locally ringed spaces represented by the punctured affine scheme is no longer representable on ringed spaces. In this case, the necessary modification of the functor is just to replace algebraic functions by algebro-topological ones. 

Section 3 is devoted to the projective $n$-dimensional ringed space $\Pj_k^n$. It is constructed as the quotient of the ringed space $\AAA^n_k \hskip -1pt {\scriptstyle -\{0\}}$ by the multiplicative group $\G_m$. We prove in Theorem \ref{Pj^n} that it represents the functor of algebro-topological invertible quotients of the free module of rank $n+1$ (see Definition \ref{algtopInvQuot}).

Section 4 deals with the grassmannian ringed space $\Grass_k(r,n)$ representing the functor of algebro-topological quotients of rank $r$ of the free module of rank $n$ (see Definition \ref{algtopQuot}). For its construction, we first construct the ringed space of algebro-topological epimorphisms $k^n\to k^r$; the grassmannian is defined as the quotient of this ringed space by the linear group $\GL_r$ (this group and its actions is studied in subsection \ref{GL}). We conclude this section by constructing the Pl\"ucker embedding for the grassmannian and showing its compatibility with the Pl\"ucker embedding of the grassmannian scheme.

Section 5 is only devoted to show how Stanley-Reisner theory may be reinterpreted in a natural way from our perspective. We do not give new results; we just mention some natural new problems in Stanley-Reisner theory  arising from this perpective.

Section 6 is intended to show some elementary functors on ringed spaces which are representable by a ringed space  which does not have an analogous locally ringed space; in other words,   the above examples (affine space, projective space, grassmannian) may be viewed as finite ringed spaces associated to the affine, projective or grassmannian schemes with respect to standard coverings on them; in this section we give examples of finite ringed spaces representing natural functors and that are not the finite ringed space associated to any scheme.

\section{Generalities}

\subsection{Representable functors}

In this paper we shall make without mention an extensive use of Yoneda's lemma; thus, we shall frequently define a morphism between ringed spaces by defining the morphism between the functors that they represent. This is Grothendieck's standard technique of the functor of points of  schemes.

Let $\mathcal C$ be a category. Recall that a contravariant functor $F\colon\mathcal C\to \{\text{Sets}\}$ is called {\em representable} if there exists an object $R$ of $\mathcal C$ and an element $\xi\in F(R)$ such that the map
\[ \aligned \Hom_{\mathcal C}(A,R)&\to F(A)\\ f&\mapsto F(f)(\xi)\endaligned \] is bijective for any $A\in\mathcal C$. In this case we say that $F$ is representable by $R$ and that $\xi$ is the universal object on $R$. Yoneda's lemma states that if $F,F'$ are two representable functors (with representants $R$ and $R'$ respectively), then one has a bijection
\[\Hom (F,F')=\Hom_{\mathcal C}(R,R')\] where $\Hom(F,F')$ denotes the set of morphisms of functors bewtween $F$ and $F'$.

\subsection{Ringed spaces over $k$}

Throughout this paper $k$ denotes a commutative ring with unit. The topological space with one element shall be denoted by $*$. 

\begin{defn} A {\em ringed space over $k$} is a ringed space $(S,\OO_S)$ such that $\OO_S$ is a sheaf of $k$-algebras. In other words, it is a ringed space over the ringed space $(*,k)$. A morphism of ringed spaces over $k$ is a morphism of ringed spaces $(f,f_\#)\colon (S',\OO_{S'})\to (S,\OO_S)$ such that $f_\#\colon \OO_S\to f_*\OO_{S'}$ is a morphism of sheaves of $k$-algebras. In other words, it is a morphism of ringed spaces over $(*,k)$. Given two ringed spaces over $k$, $(S',\OO_{S'})$ and  $(S,\OO_S)$, we shall denote by $\Hom_k(S',S)$ the set of morphisms of ringed spaces over $k$ from $(S',\OO_{S'})$ to $(S,\OO_S)$.

Any open subset $U$ of a ringed space $S$ inherits a ringed space structure by taking $\OO_U={\OO_X}_{\vert U}$, and we say that it is an {\em open ringed subspace}.

The {\em fiber product} $S\times_TS'$ of two morphisms of ringed spaces $S\to T$ and $S'\to T$ over $k$ is the ringed space $$S\times_TS':=(S\times_T S',\OO_{S\times_T S'})$$ where $S\times_T S'$ is the topological fiber product  and $\OO_{S\times_T S'}=\pi_S^{-1}\OO_S\otimes_{\pi_T^{-1}\OO_T}\pi_{S'}^{-1}\OO_{S'}$, with $\pi_S\colon S\times_T S'\to S$ (resp. $\pi_{S'}$, $\pi_T$) the natural map. For any ringed space $Z$ over $k$ one has
\[ \Hom_k(Z,S\times_TS')=  \Hom_k(Z,S ) \times_{\Hom_k(Z,T)}  \Hom_k(Z, S').\]
\end{defn}

\begin{ejems}\label{top} $\,$\medskip

(1) Let us consider the forgetful functor
\[ \aligned \{\text{Ringed spaces over }k\} &\to \{\text{Topological spaces}\} \\ (T,k)&\mapsto T\endaligned.\]
This functor has a right adjoint: any topological space $T$ may be viewed as a ringed space over $k$, by taking $\OO_T=k$ the constant sheaf on $T$. We have a fully faithful functor
\[ \aligned \{\text{Topological spaces}\}&\to \{\text{Ringed spaces over }k\}\\ T&\mapsto (T,k)\endaligned\] 
which is a right adjoint of the forgetful functor; thus, for any ringed space $S$ and any topological space $T$, one has
\[ \Hom_k(S, (T,k))=\Hom_{\text{cont}}(S,T).\]
The forgetful functor also has a left adjoint: any topological space $T$ may be viewed as a ringed space over $k$, by taking $\OO_T=0$. We have a fully faithful functor
\[ \aligned \{\text{Topological spaces}\}&\to \{\text{Ringed spaces over }k\}\\ T&\mapsto (T,0)\endaligned\] 
which is a left adjoint of the forgetful functor; thus, for any ringed space $S$ and any topological space $T$, one has
\[ \Hom_k( (T,k), S)=\Hom_{\text{cont}}(T,S).\]

(2) Any $k$-algebra $A$ may be viewed as a ringed space over $k$: the punctual ringed space $(*,A)$. We have a fully faithful functor
\[ \aligned \{ k-\text{algebras} \}&\to \{\text{Ringed spaces over }k\}\\ A&\mapsto (*,A)\endaligned\] which has a left adjoint:   $(S,\OO_S)\mapsto \OO_S(S)$. Thus, for any ringed space $S$ and any $k$-algebra $A$ one has
\[ \Hom_k(S,(*,A))=\Hom_{k\text{-alg}}(A,\OO_S(S)).\]
This says that  the punctual ringed space  $(*,A)$ plays the role in the category of ringed spaces that the affine scheme $\Spec A$ does in the category of locally ringed spaces. Moreover, one has a canonical morphism of ringed spaces $\Spec A\to (*,A)$.\medskip

(3) Ringed posets. Let $(S,\OO_S)$ be a ringed space such that $S$ is a poset, i.e., the topology on $S$ is given by a partial order $\leq $ on $S$ (a subset $U$ of $S$ is open if and only if $s\in U$ and $s'\geq s$ implies $s'\in U$). A sheaf of rings $\OO_S$ on $S$ is equivalent to the following data:

- a ring $\OO_s$ for each $s\in S$,

- a morphism of rings $r_{ss'}\colon \OO_s\to\OO_{s'}$ for any $s\leq s'$,

\noindent such that $r_{ss}=\id_{\OO_s}$ and $r_{s's''}\circ r_{ss'}=r_{ss''}$ for any $s\leq s'\leq s''$.

When $S$ is a finite poset, then we say that $(S,\OO_S)$ is a finite ringed space. These appear in a natural way when considering a finite covering of a ringed space:\medskip

(3.1) {\em Finite ringed space associated to a finite covering of a ringed space} (see \cite{Sancho3}, \cite{Sancho4}). Let $(S,\OO_S)$ be a ringed space and let $\U=\{U_1,\dots,U_n\}$ be a finite open covering of $S$. For each $s\in S$, let $U^s=\underset{ U_i\ni s}\cap U_i$ and let us define the equivalence relation $\sim$ on $S$ by:
\[ s\sim s'\Leftrightarrow U^s=U^{s'}\] Let $X$ be the (finite) quotient set $X:=S/\sim$, and let us consider the topology  on $X$ given by the partial order $[s_1]\leq [s_1]\text { iff } U^{s_1}\supseteq U^{s_2}$; The quotient map $\pi\colon S\to X$ is continuous and we define $\OO_X:=\pi_*\OO_S$; thus $(X,\OO_X)$ is a finite ringed space and $\pi$ is a morphism of ringed spaces. If $\U=\{S\}$, then the associated finite ringed space is $(*,\OO_S(S))$.
\end{ejems}

\begin{defn} A {\em quasi-coherent module}  on a ringed space $(S,\OO_S)$ is an $\OO_S$-module $\M$ which is locally a cokernel of free modules; i.e., there exists an open covering $\{ U_i\}$ of $S$ and, for each $i$, an exact sequence of $\OO_{U_i}$-modules
\[ \underset{j\in J}\oplus\OO_{U_i} \to  \underset{j\in J'}\oplus\OO_{U_i} \to\M_{\vert U_i}\to 0\] for some sets of indexes $J,J'$.
\end{defn}

\begin{defn} \label{locallyfree}A {\em locally free $\OO_S$-module of rank $r$} on a ringed space $S$ is an $\OO_S$-module $\E$ which is locally isomorphic to $\OO_S^r$, i.e., there exists an open covering $\{U_i\}$ of $S$ such that $\E_{\vert U_i}\simeq \OO_{U_i}^r$ (isomorphism of $\OO_{U_i}$-modules). If $r=1$, we say that it is an {\em invertible} $\OO_S$-module.
\end{defn}

\begin{defn} For each $\OO_S$-module $\M$, $\wedge^r\M$ denotes its $r$-th exterior power. If $\E$ is locally free of rank $r$, then $\wedge^r\E$ is an invertible $\OO_S$-module. A morphism $\phi\colon \M\to\Nc$ of $\OO_S$-modules induces a morphism $\wedge^r\phi\colon \wedge^r\M\to\wedge^r\Nc$ between their exterior powers.
\end{defn}

\subsection{Locally ringed spaces}
\begin{defn} A {\em locally ringed space} over $k$ is a ringed space $(X,\OO_X)$ over $k$  such that  $\OO_{X,x}$ is a local ring for any $x\in X$. A morphism of locally ringed spaces over $k$ is a morphism of ringed spaces $f\colon X\to Y$ over $k$ such that $\OO_{Y,f(x)}\to \OO_{X,x}$ is a local morphism between local rings for any $x\in X$. Any $k$-scheme  is a locally ringed space over $k$, and a morphism of $k$-schemes is just a morphism of locally ringed spaces over $k$.
\end{defn}

For any locally ringed spaces $S,S'$ over $k$, we denote by $\Hom_{k-\text{loc}}(S,S')$ the set of morphisms of locally ringed spaces over $k$ between them. If they are schemes, it is also denoted by $\Hom_{k-\text{sch}}(S,S')$.

\subsubsection{Affine schemes} Let $A$ be a $k$-algebra and $\Spec A$ its associated affine scheme. For any scheme $S$ over $k$ one has
\[ \Hom_{k-\text{sch}}(S,\Spec A)=\Hom_{k-\text{alg}}(A,\OO_S(S)).\] In other words, $\Spec A$ represents the functor $S\mapsto \Hom_{k-alg}(A,\OO_S(S))$ over the category of $k$-schemes. In fact, $\Spec A$ represents this functor over the category of all locally ringed spaces over $k$; that is:
\begin{prop}[EGA I, 1.6.3]\label{espectroalgebraico} For any locally ringed space $(S,\OO_S)$ over $k$ one has
\[ \Hom_{k-\text{\rm loc}}(S,\Spec A)=\Hom_{k-\text{\rm alg}}(A,\OO_S(S)).\]
\end{prop}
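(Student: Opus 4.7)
The plan is to exhibit mutually inverse maps between the two sides. One direction is immediate: any morphism of locally ringed spaces $f\colon S\to \Spec A$ over $k$ induces, by taking global sections, a $k$-algebra homomorphism $A=\OO_{\Spec A}(\Spec A)\to \OO_S(S)$. The work lies in constructing the inverse.

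Given a $k$-algebra homomorphism $\varphi\colon A\to \OO_S(S)$, I would first define the underlying continuous map $f\colon S\to \Spec A$ as follows. For each $x\in S$, compose $\varphi$ with the restriction $\OO_S(S)\to\OO_{S,x}$ to obtain $\varphi_x\colon A\to \OO_{S,x}$; since $\OO_{S,x}$ is local with maximal ideal $\mathfrak{m}_x$, its preimage $\varphi_x^{-1}(\mathfrak{m}_x)$ is a prime ideal of $A$, which I take to be $f(x)$. Continuity is checked on the basis of distinguished opens: for any $a\in A$ one has $f^{-1}(D(a))=\{x\in S : \varphi(a)_x\notin \mathfrak{m}_x\}$, which is precisely the open locus where the section $\varphi(a)\in\OO_S(S)$ is invertible, hence open in $S$.

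Next I would define the sheaf morphism $f_\#\colon \OO_{\Spec A}\to f_*\OO_S$ on the basis of distinguished opens. On $D(a)$ we have $\OO_{\Spec A}(D(a))=A_a$, and the composition $A\xrightarrow{\varphi}\OO_S(S)\to \OO_S(f^{-1}(D(a)))$ sends $a$ to an invertible element (by the description of $f^{-1}(D(a))$ above), so it extends uniquely to a $k$-algebra homomorphism $A_a\to \OO_S(f^{-1}(D(a)))$. Compatibility with restrictions in the basis is routine, so this extends to a morphism of sheaves of $k$-algebras. By construction, the induced map on stalks $\OO_{\Spec A, f(x)}=A_{\varphi_x^{-1}(\mathfrak{m}_x)}\to \OO_{S,x}$ is the localisation of $\varphi_x$ at $\varphi_x^{-1}(\mathfrak{m}_x)$ and is therefore a local homomorphism; hence $(f,f_\#)$ is a morphism of locally ringed spaces over $k$.

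Finally, I would verify that the two assignments are mutually inverse. That $\varphi\mapsto f\mapsto \Gamma(f_\#)$ recovers $\varphi$ is immediate from the definition of $f_\#$ on $D(1)=\Spec A$. Conversely, given $g\colon S\to \Spec A$ of locally ringed spaces with associated $\varphi=\Gamma(g_\#)$, one must check that the recipe above reproduces $g$: on points, this uses exactly the local character of $g_\#$ at each stalk (the preimage of $\mathfrak{m}_x$ along $\OO_{\Spec A,g(x)}\to\OO_{S,x}$ equals the maximal ideal $g(x)\OO_{\Spec A,g(x)}$, hence the preimage along $A\to\OO_{S,x}$ equals $g(x)$); on sheaves, both morphisms agree on the dense set of sections coming from $A$ and are determined by this data. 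The main obstacle is this last bookkeeping step, where locality is used in an essential way, and where the fact that $\OO_{\Spec A}$ is generated (as a sheaf) by the global sections coming from $A$ is what forces uniqueness.
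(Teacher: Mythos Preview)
The paper does not actually give a proof of this proposition; it is stated with a reference to EGA I, 1.6.3 and then used without further comment. Your argument is correct and is essentially the standard proof found in EGA: define the map on points by pulling back the maximal ideal along $A\to\OO_{S,x}$, check continuity on principal opens, build $f_\#$ on the basis $\{D(a)\}$ using the universal property of localisation, and verify locality on stalks. The mutual-inverse verification is also the standard one; your remark that locality of $g_\#$ is what forces $g(x)=\varphi_x^{-1}(\mathfrak m_x)$ is exactly the point, and the agreement of the sheaf maps follows because a $k$-algebra homomorphism out of $A_a$ is determined by its composite with $A\to A_a$.
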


In particular, the affine $n$-dimensional $k$-scheme $\AAA^n_{k-\text{sch}}:=\Spec k[x_1,\dots ,x_n]$ represents the functor $$S\mapsto \OO_S(S)\times\overset{n}\cdots\times\OO_S(S)$$ over the category of locally ringed spaces.

Analogously, the multiplicative group $k$-scheme $\G_m^{k-\text{sch}}:=\Spec k[t,t^{-1}]$ represents the functor
\[ S\mapsto \OO_S(S)^\times\] over the category of locally ringed spaces, where $\OO_S(S)^\times$ denotes the group of units of the ring $\OO_S(S)$. More generally, the scheme $\AAA^n_{k-\text{sch}}\hskip -1pt {\scriptstyle -\{0\}}$ (the complement of the zero section $0\colon \Spec k\to \AAA^n_{k-\text{sch}}$) represents the functor over locally ringed spaces over $k$:
$$ \aligned   S &\mapsto  \{\text{Epimorphisms of modules }\OO_S^n\to\OO_S\}\\ &=\{(f_1,\dots,f_n)\in \OO_S(S)^n: U_{f_1},\dots, U_{f_n} \text{ is a covering}\}\endaligned $$ where, for any $f\in \OO_S(S)$, $U_f$ denotes the (open) subset of $S$ where $f$ is a unit.

\subsubsection{Projective  and grassmannian schemes} Let us denote $\Pj^n_{k-\text{sch}}:=\Proj k[x_0,\dots,x_n]$ the $n$-dimensional projective scheme over $k$. For any $k$-scheme $S$ one has
\[ \Hom_{k-\text{sch}}(S,\Pj^n_{k-sch})=\{\text{Invertible quotients of }\OO_S^{n+1}\}.\] Moreover, $\Pj^n_{k-\text{sch}}$ represents the functor of invertible quotients of $k^{n+1}$ over the category of all locally ringed spaces over $k$; that is:
\begin{prop}\label{proj-scheme1} For any locally ringed space $(S,\OO_S)$ over $k$ one has
\[ \Hom_{k-\text{\rm loc}}(S,\Pj^n_{k-\text{\rm sch}})=\{\text{\rm Invertible quotients of }\OO_S^{n+1}\}.\]
\end{prop}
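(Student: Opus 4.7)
The plan is to reduce the statement to Proposition \ref{espectroalgebraico} by covering $\Pj^n_{k-\text{sch}}$ with its standard affine opens $V_i = D_+(x_i) \simeq \Spec k[x_0/x_i,\dots,\widehat{x_i/x_i},\dots,x_n/x_i]$, and to exploit the fact that on a locally ringed space, the locus where a section generates an invertible module is a well-behaved open subset under pullback.

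First, I would construct the map from invertible quotients to morphisms. Given an invertible quotient $\epsilon\colon \OO_S^{n+1}\twoheadrightarrow\Lc$ defined by sections $s_0,\dots,s_n$, let $U_i\subseteq S$ be the set of points $x$ at which $(s_i)_x$ generates $\Lc_x$. This is open, and since the $s_i$ generate $\Lc$ globally, at every $x\in S$ at least one $(s_i)_x$ must lie outside $\mathfrak{m}_x\Lc_x$ (otherwise the surjection $\OO_{S,x}^{n+1}\twoheadrightarrow\Lc_x$ would factor through $\mathfrak{m}_x\Lc_x$, contradicting Nakayama's lemma); hence $\{U_i\}$ is an open covering. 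On each $U_i$, $s_i$ trivializes $\Lc_{\vert U_i}$, so one obtains $n$ global sections $s_j/s_i\in\OO_S(U_i)$ ($j\neq i$). By Proposition \ref{espectroalgebraico}, these define a morphism $f_i\colon U_i\to V_i$ of locally ringed spaces. The standard transition identities $(s_l/s_i)=(s_l/s_j)\cdot(s_j/s_i)$ on $U_i\cap U_j$ show that $f_i$ and $f_j$ agree on overlaps, so they glue to a morphism $f\colon S\to \Pj^n_{k-\text{sch}}$.

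Conversely, given $f\in\Hom_{k-\text{loc}}(S,\Pj^n_{k-\text{sch}})$, pulling back the universal surjection $\OO_{\Pj^n}^{n+1}\twoheadrightarrow \OO(1)$ yields an invertible quotient of $\OO_S^{n+1}$. To check these two constructions are mutually inverse, the decisive point---and the reason the locally ringed hypothesis cannot be dropped, as explained in the introduction---is that the open sets $U_i$ extracted from the sections $s_i$ coincide with $f^{-1}(V_i)$ for the morphism $f$ associated with $\epsilon$. Since $f$ is a morphism of \emph{locally} ringed spaces, the local ring map $\OO_{\Pj^n,f(x)}\to \OO_{S,x}$ sends maximal ideal to maximal ideal; therefore $f(x)\in V_i$ precisely when the stalk image of $x_i$ generates $\OO(1)_{f(x)}$, which by pullback is equivalent to $(s_i)_x$ generating $\Lc_x$. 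Without the locality condition one would only get $f^{-1}(V_i)\subseteq U_i$, which is exactly the obstruction the authors outline in the introduction.

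The main obstacle is verifying that the two constructions are mutually inverse in a clean way. In one direction, the composition starts with an invertible quotient, constructs the cover $\{U_i\}$, builds the morphisms $f_i$ via Proposition \ref{espectroalgebraico}, glues them to $f$, and one has to check that the pullback of the universal quotient recovers the original $\epsilon$ up to the canonical equivalence of invertible quotients. This is a stalkwise verification that reduces, on each $U_i$, to an identification between the universal quotient on $V_i$ and $(1,s_0/s_i,\dots,s_n/s_i)$ on $U_i$. In the other direction, one must show that $f$ is determined by the pulled-back quotient; this again follows by restricting to the preimages of the affine opens $V_i$ and invoking Proposition \ref{espectroalgebraico} together with the coincidence $f^{-1}(V_i)=U_i$ established above.
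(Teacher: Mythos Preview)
The paper does not actually supply a proof of this proposition: immediately after stating it (together with the grassmannian analogue), the authors write that these results ``are probably known\dots\ We leave the proofs---rather easy anyway---to the reader.'' So there is no argument to compare against.

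Your proposal is the standard argument and is correct. The only step that deserves a little more care is the gluing: before appealing to the transition identities you should note that $f_i(U_i\cap U_j)\subseteq V_i\cap V_j$. This is exactly the place where the locally ringed hypothesis first enters (a special case of the point you make later): on $U_i\cap U_j$ the function $s_j/s_i$ is a unit, and since $f_i$ is a morphism of locally ringed spaces by Proposition~\ref{espectroalgebraico}, units pull back from units, so $x_j/x_i$ is invertible at every image point. Once both $f_i$ and $f_j$ land in the affine open $V_i\cap V_j$, Proposition~\ref{espectroalgebraico} reduces the comparison to the identity on ring maps, and the transition identities do the rest. With that remark made explicit, the argument is complete.
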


Analogous result holds for grassmannians. Let $\Grass_{k-\text{sch}}(r,n)$ the scheme representing (over the category of $k$-schemes) the functor of locally  free of rank $r$ quotients  of $k^n$. Then
\begin{prop}\label{grass-scheme1} For any locally ringed space $(S,\OO_S)$ over $k$ one has
\[ \Hom_{k-\text{\rm loc}}(S,\Grass_{k-\text{\rm sch}}(r,n))=\{\text{\rm Locally free of rank $r$ quotients of }\OO_S^{n}\}.\]
\end{prop}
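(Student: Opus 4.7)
The plan is to reduce to the representability of $\Grass_{k-\text{sch}}(r,n)$ on $k$-schemes by working on a suitable affine open cover, in the same spirit in which Proposition~\ref{espectroalgebraico} is used to extend representability of affine schemes from schemes to arbitrary locally ringed spaces. In one direction, a morphism of locally ringed spaces $f\colon S\to \Grass_{k-\text{sch}}(r,n)$ produces a locally free rank $r$ quotient $f^*\OO^n\twoheadrightarrow f^*\Q_{\mathrm{univ}}$ by pulling back the universal quotient on the grassmannian scheme; this association is clearly functorial in $S$.

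In the other direction, start with a quotient $\epsilon\colon\OO_S^n\twoheadrightarrow\Q$ with $\Q$ locally free of rank $r$. Recall the standard affine open cover $\{U_I\}_{I\subset\{1,\dots,n\},\,|I|=r}$ of $\Grass_{k-\text{sch}}(r,n)$, where $U_I\simeq \AAA^{r(n-r)}_{k-\text{sch}}$ is the locus on which the composition $\OO^I\hookrightarrow \OO^n\twoheadrightarrow \Q_{\mathrm{univ}}$ is an isomorphism. On $S$, define $V_I\subset S$ as the subset on which $\OO_S^I\hookrightarrow \OO_S^n\overset{\epsilon}\twoheadrightarrow \Q$ is an isomorphism. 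The key point, which I would establish first, is that each $V_I$ is open and that the $V_I$ jointly cover $S$. Openness is the locus where a certain $r\times r$ minor (the determinant of a morphism between locally free modules of the same rank over $V_I$, with respect to a local trivialization) is a \emph{unit}; on a locally ringed space this is open. To see that the $V_I$ cover, fix $s\in S$; reducing modulo $\mathfrak m_s$, $\epsilon$ becomes a surjection $\kappa(s)^n\twoheadrightarrow \Q(s)$ of vector spaces, where $\Q(s)$ has dimension $r$; hence some $I$ gives a basis of $\Q(s)$, and by Nakayama this propagates to a neighborhood of $s$, i.e.\ $s\in V_I$.

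On each $V_I$, the quotient $\epsilon_{|V_I}$ is determined by the matrix of $\OO_S^{\{1,\dots,n\}\setminus I}\to \OO_S^I\simeq \Q_{|V_I}$ obtained by composing with the inverse of the $I$-trivialization, i.e.\ by an element of $\OO_S(V_I)^{r(n-r)}$. Since $U_I\simeq \AAA^{r(n-r)}_{k-\text{sch}}$ is affine, Proposition~\ref{espectroalgebraico} gives a canonical morphism of locally ringed spaces $f_I\colon V_I\to U_I\hookrightarrow \Grass_{k-\text{sch}}(r,n)$ corresponding to this datum, and moreover the pullback of the universal quotient under $f_I$ recovers $\epsilon_{|V_I}$. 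The next step is verifying that the $f_I$ agree on overlaps $V_I\cap V_J$: by construction both $f_I$ and $f_J$ factor through $U_I\cap U_J$, and on this open they correspond to the \emph{same} quotient $\epsilon_{|V_I\cap V_J}$; applying Proposition~\ref{espectroalgebraico} on any affine open of $U_I\cap U_J$ forces $f_I=f_J$ there. Hence the $f_I$ glue to a morphism of locally ringed spaces $f\colon S\to \Grass_{k-\text{sch}}(r,n)$ with $f^*\Q_{\mathrm{univ}}=\Q$.

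Finally, one checks that these two constructions are mutually inverse. The nontrivial composition to verify is that, starting with $f$, pulling back $\Q_{\mathrm{univ}}$ and then reconstructing a morphism yields $f$ again; this is local on $S$, so it reduces to the corresponding statement on each $V_I$, where it is exactly the uniqueness part of Proposition~\ref{espectroalgebraico}. The main technical obstacle, as anticipated, is the openness of the $V_I$ and the fact that they cover, which is precisely where the locally ringed hypothesis enters in an essential way; on a mere ringed space, the invertibility of an $r\times r$ minor is not an open condition, which is exactly the phenomenon that the authors' \emph{algebro-topological} version of the quotient functor is designed to correct.
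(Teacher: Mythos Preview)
The paper does not supply its own proof of this proposition: immediately after stating Propositions~\ref{proj-scheme1} and~\ref{grass-scheme1} it says ``We leave the proofs---rather easy anyway---to the reader.'' So there is nothing to compare against except the implicit claim that the argument is routine.

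Your argument is the standard one and is essentially correct. A couple of small remarks. First, openness of each $V_I$ does \emph{not} in fact require the locally ringed hypothesis: $V_I$ is the locus where the morphism $\wedge^r\phi_I\colon \OO_S\to\wedge^r\Q$ of invertible modules is an isomorphism, and this is open on any ringed space (cf.\ the paper's definition of $U_e$). The locally ringed hypothesis enters only in showing that the $V_I$ \emph{cover} $S$ (your Nakayama argument) and in checking that $f_I(V_I\cap V_J)\subset U_I\cap U_J$: for the latter, one uses that a local homomorphism of local rings reflects units, so the $J$-minor, whose pullback along $f_I$ is a unit on $V_I\cap V_J$, is already a unit at the image point. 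Second, in the gluing step you can simplify slightly: $U_I\cap U_J$ is itself affine (a basic open of $U_I$), so a single application of Proposition~\ref{espectroalgebraico} to $U_I\cap U_J$ suffices, rather than passing to ``any affine open'' of it. With these adjustments your proof goes through cleanly and is exactly the kind of argument the authors had in mind.
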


Propositions \ref{proj-scheme1} and \ref{grass-scheme1}  are not necessary for the rest of the paper and they are probably known. They are more intended as a motivation. We leave the proofs---rather easy anyway---to the reader. 

\subsection{Distributive lattices}\label{distributivelattices}

All the results of this subsection may be found in \cite{TeresaSancho}. They are not necessary for the rest of the paper, but we think that they give a very interesting point of view.

\begin{defn} A {\em distributive lattice} is a set $A$ with two binary composition laws (addition and product) satisfying:

(1) $A$ is a commutative semigroup under both operations (we denote by $0$ and $1$ the neutral element relative to the addition and the product respectively).

(2) The product is distributive over the addition.

(3) One has $a\cdot 0=0$, $a^2=a$, $1+a=a$ for any $a\in A$.

A morphism $f\colon A\to B$ of distributive lattices is a map satisfying $f(0)=0$, $f(1)=1$ and $f(a+a')=f(a)+f(a'), f(a\cdot a')=f(a)\cdot f(a')$ for any $a,a'\in A$. The set of morphisms of distributive lattices shall be denoted by $\Hom_{\text{d.l}}(A,B)$.
\end{defn}

Our main example is the following: for any topological space $S$, the set $\A(S)$ of all closed subsets in $S$, with $+$ and $\cdot$ interpreted as intersection and union respectively, is a distributive lattice. A continuous map $f\colon T\to S$ induces a morphism of distributive lattices $f^*\colon \A(S)\to\A(T), a\mapsto f^*(a):=f^{-1}(a)$. One can also consider the set $\Open(S)$ of all open subsets in $S$, with $+$ and $\cdot$ interpreted as union and intersection respectively; it is also a distributive lattice, which is isomorphic to $\A(S)$ (by sending each open subset to its  complementary closed subset).

On a distributtive lattice $A$, it makes sense to define ideals and prime ideals. We shall denote by $\SSpec A$ the set of prime ideals of $A$. It is a topological space with the Zariski topology. 

\begin{thm} (topological analog of Proposition \ref{espectroalgebraico}) For any topological space $S$ one has \[ \Hom_{\text{\rm cont}}(S,\SSpec A)=\Hom_{\text{\rm d.l.}}(A,\A(S)).\]
\end{thm}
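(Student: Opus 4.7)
The plan is to proceed in direct analogy with the proof of Proposition \ref{espectroalgebraico}, constructing the bijection explicitly in both directions by the same ``functor of points'' pattern but with rings replaced by distributive lattices.

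For the forward map, given a continuous $\phi\colon S\to \SSpec A$, I would use the canonical morphism of distributive lattices
\[ v\colon A\longrightarrow \A(\SSpec A),\qquad a\mapsto V(a):=\{\mathfrak p\in \SSpec A : a\in \mathfrak p\},\]
together with the pullback $\phi^*\colon \A(\SSpec A)\to \A(S)$ sending a closed subset to its preimage. The desired morphism is the composition $\phi^*\circ v$. That $v$ is a morphism of distributive lattices reduces to the identities $V(a+b)=V(a)\cap V(b)$ and $V(a\cdot b)=V(a)\cup V(b)$, both of which follow immediately from the ideal-theoretic and primality axioms of prime ideals in $A$, together with the normalizations $V(0)=\SSpec A$ and $V(1)=\emptyset$.

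For the backward map, given a morphism $f\colon A\to \A(S)$ of distributive lattices, I define $\phi_f\colon S\to \SSpec A$ pointwise by
\[ \phi_f(s):=\{\, a\in A : s\in f(a)\,\}.\]
The key verification is that $\phi_f(s)$ is indeed a prime ideal of $A$: closure under $+$ follows from $f(a+a')=f(a)\cap f(a')$; absorption under product from $f(a)\subseteq f(a)\cup f(b)=f(a\cdot b)$; properness from $f(1)=\emptyset$, so $1\notin\phi_f(s)$; and primality from $f(a\cdot b)=f(a)\cup f(b)$, which converts ``$s$ lies in the union'' into ``$s$ lies in one of the pieces.'' Continuity of $\phi_f$ is then immediate from the computation $\phi_f^{-1}(V(a))=\{s\in S : s\in f(a)\}=f(a)$, which is closed in $S$; since the $V(a)$ generate all closed subsets of $\SSpec A$ under arbitrary intersection, this suffices.

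Finally, one checks that the two assignments are mutually inverse. Starting from a continuous $\phi$ and producing $f:=\phi^*\circ v$, the associated map $\phi_f$ satisfies
\[ \phi_f(s)=\{a : s\in \phi^{-1}(V(a))\}=\{a : \phi(s)\in V(a)\}=\{a : a\in\phi(s)\}=\phi(s),\]
recovering $\phi$. Conversely, starting from $f$ and producing $\phi_f$, the identity $\phi_f^{-1}(V(a))=f(a)$ computed above shows that $\phi_f^*\circ v=f$. I expect the main obstacle to be entirely one of bookkeeping: keeping straight the paper's convention in which $+$ corresponds to intersection and $\cdot$ to union in $\A(S)$, and compatibly the dual roles of $+$ and $\cdot$ in the ideal/prime-ideal axioms for $A$. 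Beyond this translation, the argument is a formal transcription of Grothendieck's proof for the algebraic spectrum into the setting of distributive lattices.
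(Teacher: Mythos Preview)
The paper does not actually prove this theorem: the subsection on distributive lattices opens with ``All the results of this subsection may be found in \cite{TeresaSancho}'' and the statement is given without proof. Your argument is correct and is the standard one for this adjunction, mirroring the algebraic case exactly as you intended. One small point you might tighten: the implication $a+b\in\mathfrak p\Rightarrow a\in\mathfrak p$ (needed for $V(a+b)\supseteq V(a)\cap V(b)$ in the reverse direction, and really for $V(a+b)\subseteq V(a)$) is not a bare ideal axiom but uses the absorption identity $a\cdot(a+b)=a^2+ab=a+ab=a$ together with the absorbing property of ideals under $\cdot$; this is routine, but your phrase ``follow immediately from the ideal-theoretic and primality axioms'' hides it.
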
 This theorem says that the functor $A\mapsto \SSpec A$ is right adjoint of the functor $S\mapsto\A(S)$. In particular, one has a canonical morphism $S\to\SSpec \A(S)$. If $S$ is a finite poset, then $S\to\SSpec \A(S)$ is an isomorphism. Analogously, if $A$ is a finite distributive lattice, the natural morphism $A\to \A(\SSpec A)$ is an isomorphism.

 For any distributive lattices $A,B$, we shall denote $A\otimes B$ its tensor product; one has $$\SSpec(A\otimes B)=\SSpec A\times \SSpec B.$$
 We shall denote by $\KK$ the distributive lattice $\KK=\{0,1\}=\A( * )$; then one has
 \[ \SSpec A= \Hom_{\text{d.l.}}(A,\KK).\]  Let $\KK[x]$ be the ``free'' distributive lattice generated by an element $x$; it is a distributive lattice with 3 elements: $\KK[x]=\{0,x,1\}$. This plays the role of polynomials in one variable for distributive lattices, because
\[ \Hom_{\text{d.l}}(\KK[x], A)=A\] for any distributive lattice $A$. Thus, for any topological space one has
\[ \Hom_{\text{cont}}(S,\SSpec \KK[x])=\A(S).\]
  
Now, let us denote $\KK[x_1,\dots,x_n]$ the ``free'' distributive lattice generated by $n$ elements; in other words
\[ \KK[x_1,\dots,x_n]=\KK[x_1]\otimes\cdots\otimes \KK[x_n].\] Then, for any topological space one has
\[ \Hom_{\text{cont}}(S,\SSpec \KK[x_1,\dots,x_n])=\A(S)\times\overset n\cdots\times \A(S).\]
 
\subsection{The space of parts}

For any set $X$, we shall denote by $\PP(X)$ the set of subsets of $X$ and by $\PP^*(X)$ the set of non-empty subsets of $X$. $\PP(X)$ is a poset  with the partial order given by inclusion:
\[ \delta \leq \delta'  \Leftrightarrow \delta \subseteq \delta' .\] Hence it has a topology (a subset $U$ of $\PP(X)$ is open if $\delta\in U$ and $\delta'\geq \delta$ implies $\delta'\in U$). It is clear that $\PP^*(X)$ is an open subset of $\PP(X)$. For our purposes, the set $X$ will be finite, and then $\PP(X)$ is a finite poset.

\begin{defn} The following notations  will be widely used in the paper: \[ \aligned \Delta_n&=\{ 1,\dots, n\}\\ \PP_n&= \PP(\Delta_n)\\ \PP^*_n&= \PP^*(\Delta_n).
\endaligned\]   
For each $i\in\Delta_n$, let $U_{\{i\}}$ be the smallest open subset of $\PP_n$ containing $\{i\}$; that is, $U_{\{i\}}$ is the set of subsets of $\Delta_n$ containing $i$. It is immediate that $U_{\{1\}},\dots, U_{\{n\}}$ is a covering of $\PP_n^*$.
\end{defn}

In  section \ref{Grassmannian-section} we shall also use the following notations: \[ \aligned \Delta_{r,n}&=\text{ set of subsets of } r \text{ elements of }   \Delta_n\\ \PP_{r,n}&= \PP(\Delta_{r,n})\\ \PP^*_{r,n}&= \PP^*(\Delta_{r,n}).
\endaligned\]   

The universal property of $\PP_n$ in the category of all topological spaces is given by the following theorem; for each topological space $S$, let us denote $\Open (S)$ the set of open subsets of $S$. A continuous map $f\colon S'\to S$ induces a map $f^*\colon\Open(S')\to\Open(S)$ such that $U\mapsto f^{-1}(U)$, hence we have a contravariant functor, $\Open$, from the category of topological spaces to the category of sets.

\begin{thm}\label{P_n} For any topological space $S$ one has a functorial isomorphism
\[ \Hom_{\text{\rm cont}}(S,\PP_n)= \Open(S)\times\overset n\cdots\times\Open(S).\] That is, one has an isomorphism of functors: $\Hom_{\text{\rm cont}}(\quad,\PP_n)=\Open\times\overset n\cdots \times\Open$.
\end{thm}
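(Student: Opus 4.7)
The plan is to exhibit mutually inverse maps between $\Hom_{\text{cont}}(S,\PP_n)$ and $\Open(S)^n$, and then check functoriality in $S$.

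First I would define the map from left to right. Recall that $U_{\{i\}}=\{\delta\in\PP_n:i\in\delta\}$ is open in $\PP_n$ (it is upward-closed). Given a continuous $f\colon S\to \PP_n$, associate the $n$-tuple $(V_1,\ldots,V_n)$ with $V_i:=f^{-1}(U_{\{i\}})\in\Open(S)$. This is clearly natural in $S$, since for $g\colon S'\to S$ continuous one has $g^{-1}(V_i)=(f\circ g)^{-1}(U_{\{i\}})$.

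Next I would construct the inverse. Given $(V_1,\ldots,V_n)\in\Open(S)^n$, define $\phi\colon S\to\PP_n$ by
\[ \phi(s):=\{i\in\Delta_n:s\in V_i\}\subseteq\Delta_n. \]
The key step is checking continuity of $\phi$. Since $\PP_n$ is a finite poset, a basis of its topology consists of the principal up-sets $U_\delta:=\{\delta'\in\PP_n:\delta'\supseteq\delta\}$ for $\delta\in\PP_n$; and a direct computation gives
\[ \phi^{-1}(U_\delta)=\{s\in S:\delta\subseteq\phi(s)\}=\bigcap_{i\in\delta}V_i, \]
which is open in $S$ as a finite intersection of opens (with $\phi^{-1}(U_\emptyset)=S$). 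Hence $\phi$ is continuous.

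Finally I would verify that the two assignments are mutually inverse. Starting from $(V_1,\ldots,V_n)$, the associated $\phi$ satisfies $\phi^{-1}(U_{\{i\}})=V_i$ by the displayed identity above, so we recover the original tuple. Conversely, starting from a continuous $f$ and setting $V_i:=f^{-1}(U_{\{i\}})$, then $i\in\phi(s)\Leftrightarrow s\in V_i\Leftrightarrow i\in f(s)$, hence $\phi=f$. Functoriality in $S$ is already noted above. I do not anticipate a real obstacle here; the only point requiring some care is identifying a basis of the topology on $\PP_n$ (the principal up-sets $U_\delta$) so that the continuity check reduces to the observation that these pull back to finite intersections of the chosen open sets $V_i$.
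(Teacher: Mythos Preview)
Your proof is correct and follows exactly the same approach as the paper's own proof: both directions of the bijection are defined identically, and the verification is the same. The only difference is that you spell out the continuity check via the basis of principal up-sets $U_\delta$ and the mutual-inverse verification explicitly, whereas the paper simply asserts these are ``easy to see.''
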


\begin{proof} For each continuous map $f\colon S\to\PP_n$, we have the $n$ open subsets $$f^{-1}(U_{\{1\}}),\dots ,f^{-1}(U_{\{n\}}).$$ Conversely, given $n$ open subsets $U_1,\dots,U_n$ of $S$, we define the map $f\colon S\to \PP_n$ by $f(s)=\{i\in\Delta_n: s\in U_i\}$. It is easy to see that $f$ is continous and that these assignations are functorial and mutually inverse.
\end{proof}

\begin{rem} (1) The proof of the Theorem \ref{P_n} shows that the open subsets $U_{\{1\}},\dots, U_{\{n\}}$ are the universal object of $\PP_n$.

(2)  For $n=1$, $\Delta_1=*$ and $\PP_1$ is the totally ordered poset of two elements $0<1$, where $0$ represents the empty subset of $*$ and $1$ the total subset. The equality
\[ \Hom_\text{cont}(S,\PP_1)=\Open(S)\] maps $f\in\Hom_\text{cont}(S,\PP_1)$ to $f^{-1}(1)$; conversely an open subset $U$ of $S$ corresponds with the continous map $f_U\colon S\to\PP_1$ defined by 
\[ f_U(s)=\left\{\aligned 1,\text{ if }s\in U \\ 0,\text{ if }s\notin U\endaligned\right.\]

(3) For $n\geq 1$  one has \[\PP_n=\PP_1\times\overset{n}\cdots\times \PP_1.\] 

(4) Let us denote by $\Cov^{n}$ the subfunctor of $\Open\times\overset n\cdots\times\Open$ defined as
 \[ \Cov^{n}(S)=\{ (U_1,\dots,U_n)\in\Open (S)^n:  U_1\cup\dots\cup U_n=S\}.\] Then
 \[ \Hom_\text{cont}(S,\PP_n^*)=\Cov^n(S).\] in other words, $\Cov^n$ is representable by $\PP_n^*$;  the covering $U_{\{1\}},\dots, U_{\{n\}}$ of $\PP_n^*$ is the universal object.
 
(5) Theorem \ref{P_n} can be generalized to any set $\Lambda$ instead of $\Delta_n$ in the following way: The functor
\[ \Open^{\Lambda}(S)=\Maps(\Lambda,\Open (S)),\] is representable by the topological space $\PP(\Lambda)=\text{ parts of }\Lambda$. Analogously,  $\Cov^{\Lambda}$ is representable by $\PP^*(\Lambda)=\text{ non empty parts of }\Lambda$.
\end{rem}
\begin{rem}\label{P_n=Spec} For any topological space $S$, one has an identification $\A(S)=\Open(S)$, by sending each closed subset to its complementary open subset. Hence, one has a natural homeomorphism (see subsection \ref{distributivelattices})
\[ \PP_n=\SSpec \KK[x_1,\dots, x_n]\] and an isomorphism of distributive lattices $\A(\PP_n)=\KK[x_1,\dots, x_n]$.
\end{rem}

\section{The affine space $\AAA^n_k$}

Let $S$ be a ringed space over $k$.

\subsection{Algebraic functions}
\begin{defn} An {\em algebraic function} on $S$ is an element of $\OO_S(S)$.
\end{defn}  A morphism of ringed spaces over $k$ $h\colon S'\to S$ induces a morphism of $k$-algebras $h^*\colon \OO_S(S)\to \OO_{S'}(S')$ between their algebraic functions; thus we have a functor
\[ \aligned \bAalg\colon \{\text{Ringed spaces over }k\} &\to \{ \text{$k$-algebras}\}\\ S&\mapsto \bAalg(S)=\OO_S(S)\endaligned\]

\begin{prop} $\bAalg$  is representable by the punctual ringed space over $k$ 
\[ \AAA^1_{\text{\rm alg}}:=(*,k[x]).\] The universal object is given by the algebraic function $x$.
\end{prop}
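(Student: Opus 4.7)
The plan is to reduce the statement to two already-available facts: the adjunction from Example~\ref{top}(2), and the universal property of the polynomial ring $k[x]$ as the free commutative $k$-algebra on one generator.

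First, I would invoke the adjunction recorded in Example~\ref{top}(2) with the specific $k$-algebra $A=k[x]$. This gives, for any ringed space $S$ over $k$, a natural bijection
\[
\Hom_k(S,(*,k[x]))=\Hom_{k\text{-alg}}(k[x],\OO_S(S)).
\]
So the question reduces to identifying the right-hand side with $\OO_S(S)=\bAalg(S)$ functorially.

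Next, I would use the universal property of the polynomial ring: the map
\[
\Hom_{k\text{-alg}}(k[x],\OO_S(S))\longrightarrow \OO_S(S),\qquad \varphi\mapsto \varphi(x),
\]
is a bijection, with inverse sending $f\in\OO_S(S)$ to the unique $k$-algebra morphism $k[x]\to\OO_S(S)$ determined by $x\mapsto f$. Both assignments are manifestly natural in $S$, since any morphism of ringed spaces $h\colon S'\to S$ over $k$ induces the $k$-algebra morphism $h^*\colon\OO_S(S)\to\OO_{S'}(S')$ and thus commutes with $\varphi\mapsto\varphi(x)$. Composing the two bijections yields the desired natural isomorphism $\Hom_k(S,\AAA^1_{\text{alg}})\cong\bAalg(S)$.

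Finally, to identify the universal object on $\AAA^1_{\text{alg}}=(*,k[x])$, one traces the identity morphism $\id\in\Hom_k(\AAA^1_{\text{alg}},\AAA^1_{\text{alg}})$ through the bijection. Under the adjunction it corresponds to $\id_{k[x]}\in\Hom_{k\text{-alg}}(k[x],k[x])$, which under $\varphi\mapsto\varphi(x)$ maps to the element $x\in k[x]=\OO_{\AAA^1_{\text{alg}}}(*)=\bAalg(\AAA^1_{\text{alg}})$. Hence $x$ is the universal algebraic function. There is no real obstacle here: the argument is a direct unravelling of two adjunctions, and both have already been stated in the paper (Example~\ref{top}(2)) or are standard algebra (the universal property of $k[x]$).
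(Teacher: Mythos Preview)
Your proof is correct and follows exactly the same route as the paper: the paper's proof is the one-line chain $\Hom_k(S,(*,k[x]))=\Hom_{k\text{-alg}}(k[x],\OO_S(S))=\OO_S(S)$, and you have simply spelled out each equality (the adjunction of Example~\ref{top}(2) and the universal property of $k[x]$) and additionally traced the identity to verify that the universal object is $x$.
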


\begin{proof} $\Hom_k(S,(*,k[x])=\Hom_{k-\text{alg}}(k[x],\OO_S(S))=\OO_S(S)$.
\end{proof}

The following remark is just to emphasize that $\AAA^1_\text{alg}$ is indeed the algebraic affine line: 

\begin{rem}   For any open subset $U$ of $S$, one has an isomorphism of $k$-algebras
\[ \OO_S(U)=\Hom_k(U,\AAA^1_\text{alg}).\]
\end{rem}

\begin{defn} The $n$-dimensional algebraic affine space over $k$ is
\[ \AAA^n_{\text{\rm alg}}:=\AAA^1_{\text{\rm alg}}\times_k\overset n\cdots\times_k \AAA^1_{\text{\rm alg}}= (*,k[x_1,\dots, x_n ]) \] and it represents the functor $\bAalgn=\bAalg\times\overset n\cdots\times\bAalg$. The universal object is given by the algebraic functions $x_1,\dots,x_n$.
\end{defn}

\begin{rem}\label{Lambda-affine} For any set $\Lambda$, one can define $\AAA^\Lambda_{\text{\rm alg}}$ as the punctual ringed space
\[ \AAA^\Lambda_{\text{\rm alg}}=(*,k[X_\Lambda])\] where $k[X_\Lambda]$ is the polynomial ring over $k$ in as many variables as elements of $\Lambda$. Then,
$$\Hom_k(S,\AAA^{\Lambda}_{\text{\rm alg}})=    \Maps(\Lambda,\OO_S(S)).$$ 
\end{rem}

\begin{defn} Each algebraic function $f\colon S\to \AAA^1_{\text{\rm alg}}$ (i.e., $f\in\OO_S(S)$), defines an open subset
\[ U_f:=\{ s\in S: f_s\text{ is a unit of }\OO_{S,s}\},\] where $f_s$ denotes the image of $f$ in $\OO_{S,s}$, i.e.,  $f_s$ is the germ of $f$ at $s$.
\end{defn}

\subsection{Topological functions}
\begin{defn} Let us denote $\A(S)$ the set of closed subsets of $S$, which is a distributive lattice. A {\em topological function} on $S$ is an element of $\A(S)$, i.e., a closed subset of $S$. A morphism of ringed spaces $h\colon S'\to S$ induces a morphism of distributtive lattices $h^*\colon \A(S)\to \A(S')$; thus we have a functor
\[ \aligned \bAtop\colon \{\text{Ringed spaces over }k\} &\to \{ \text{Distributive lattices}\}\\ S&\mapsto \bAtop(S)=\A(S)\endaligned\]
\end{defn}

\begin{prop} $\bAtop$  is representable by the ringed space over $k$ 
\[ \AAA^1_{\text{\rm top}}:=(\PP_1,k).\] By Remark \ref{P_n=Spec}, one has
  $\AAA^1_{\text{\rm top}}=(\SSpec \KK[x],k).$
\end{prop}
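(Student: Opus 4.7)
The plan is to unwind the claim into two ingredients that are already in place. By the right-adjoint adjunction in Example~\ref{top}(1), every morphism of ringed spaces over $k$ from $(S,\OO_S)$ to $(\PP_1,k)$ is determined by its underlying continuous map; concretely, for any ringed space $S$ over $k$ one has
\[ \Hom_k(S,(\PP_1,k))=\Hom_{\text{cont}}(S,\PP_1). \]
This reduces the task to a purely topological one: showing that $\bAtop$ is representable on the category of topological spaces by $\PP_1$.

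Next, I would invoke Theorem~\ref{P_n} with $n=1$, which supplies the functorial bijection $\Hom_{\text{cont}}(S,\PP_1)=\Open(S)$ sending $f$ to $f^{-1}(U_{\{1\}})$, and compose it with the natural complementation isomorphism of distributive lattices $\Open(S)\xrightarrow{\sim}\A(S)$ noted in Remark~\ref{P_n=Spec}. Chaining these natural bijections gives
\[ \Hom_k(S,(\PP_1,k))\;\xrightarrow{\sim}\;\A(S)=\bAtop(S), \]
which is the desired representability. Tracing through the identifications, the universal topological function on $\AAA^1_{\text{top}}$ is the closed point $\{0\}=\PP_1\setminus U_{\{1\}}\in\A(\PP_1)$, so that a general morphism $f\colon S\to\AAA^1_{\text{top}}$ is recovered as the closed subset $f^{-1}(\{0\})\subseteq S$.

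No step presents a real obstacle; the proof is essentially a composition of two adjunctions already established in the paper. The only point worth emphasizing is why the $k$-algebra datum in a morphism to $(\PP_1,k)$ carries no information: since every non-empty open subset of $\PP_1$ is connected, the structural morphism $k\to f_*\OO_S$ of sheaves of $k$-algebras is forced to be the canonical one, and this observation is precisely what underlies the cited adjunction of Example~\ref{top}(1).
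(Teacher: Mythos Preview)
Your proof is correct and follows essentially the same route as the paper: complementation $\A(S)\simeq\Open(S)$, Theorem~\ref{P_n} for $n=1$, and the adjunction of Example~\ref{top}(1), chained in the reverse order. One small comment: your closing justification invoking connectedness of the opens of $\PP_1$ is unnecessary---the adjunction of Example~\ref{top}(1) holds for any topological space $T$, since $k$ is initial among $k$-algebras and hence the sheaf map $k\to f_*\OO_S$ is forced regardless of the topology of $T$.
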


\begin{proof} Since taking complementary yields a functorial isomorphism $\A(S)=\Open(S)$, one concludes that $$\A(S)=\Hom_\text{cont}(S,\PP_1)  =\Hom_k(S,  (\PP_1, k)) $$ by Theorem \ref{P_n} and Remark \ref{top}, (1).
\end{proof}

Recall that $\PP_1$ is a totally ordered poset of two elements, and we write $\PP_1=\{ 0<1\}$.  Thus, a topological function $\kappa\colon S\to \AAA^1_{\text{top}}$ corresponds to the closed subset $C_\kappa=\kappa^{-1}(0)$ of $S$ or to the open subset $U_\kappa=\kappa^{-1}(1)$. The open subset $\PP_1^*$ of $\PP_1$ is the universal object.

\begin{rem} In order to emphasize the analogy with the algebraic case, let us mention that the open subset $U_\kappa$ can also be interpreted as the open subset of points where the germ of $\kappa$ is a unit: for each $s\in S$, let us denote $$\A_s:=\ilim{U} \A(U)=\ilim{U} \Hom_k(U,\AAA^1_{\text{top}})$$ where $U$ runs over the open neighbourhoods of $s$. For any $\kappa\colon S\to \AAA^1_{\text{top}}$, let us denote $\kappa_s$ the image of $\kappa$ in $\A_s$. Then
 \[ U_\kappa=\{ s\in S: \kappa_s \text{ is a unit}\}.\]\end{rem}
 
\begin{defn} We shall define
$$\AAA^n_{\text{top}}:= \AAA^1_{\text{top}}\times_k\overset n\cdots\times_k \AAA^1_{\text{top}}=(\PP_n,k).$$ It represents the functor $ \bAtopn=\bAtop\times\overset n\cdots\times\bAtop$. The universal object is given by the open subsets $U_{\{1\}},\dots, U_{\{n\}}$ of $\PP_n$. By Remark \ref{P_n=Spec}, one has
  $$\AAA^n_{\text{\rm top}}=(\SSpec \KK[x_1,\dots,x_n],k).$$
\end{defn} 
 
\begin{rem} For any set $\Lambda$ one can define $\AAA^\Lambda_{\text{top}}:=(\PP(\Lambda),k)$ and one has
\[ \Hom_k(S, \AAA^\Lambda_{\text{top}})=\operatorname{Maps}(\Lambda,\A(S)).\]
\end{rem}
 
\begin{defn} $\AAA^n_{\text{top}}$ has a unique closed point (the empty subset of $\Delta_n$ or the maximal ideal $(x_1,\dots,x_n)$), whose complement shall be denoted by $\AAA^n_{\text{top}}\hskip -1pt {\scriptstyle -\{0\}} $. Thus
\[ \AAA^n_{\text{top}}\hskip -1pt {\scriptstyle -\{0\}} =(\PP^*_n,k)\] and it represents the functor 
\[\aligned S\mapsto & \{(a_1,\dots,a_n)\in\A (S)^n:a_1+\cdots +a_n=1\}
 \\ =&\{ (U_1,\dots,U_n)\in\Open(S)^n: U_1\cup\dots\cup U_n=S\}.\endaligned \] The universal object is given by the covering $U_{\{1\}},\dots,U_{\{n\}}$ of $\PP^*_n$. 
\end{defn}
 
\subsection{ Algebro-topological functions}
 
 Let $S$ be a ringed space. An algebraic function $f\colon S\to \AAA^1_{\text{alg}}$ defines an open subset $ U_f$, hence  a topological function $\kappa_f\colon S\to \AAA^1_{\text{top}}$. However, the assignation $f\mapsto \kappa_f$ is not functorial: if $h\colon S'\to S$ is a morphism of ringed spaces, then the equality $h^{-1}(U_f)=U_{h^*(f)}$ does not hold (though it does if $h$ is a morphism of locally ringed spaces); only the inclusion  $h^{-1}(U_f)\subseteq U_{h^*(f)}$ is satisfied. Thus, in order to have a functorial behaviour, we shall define

\begin{defn} An {\em algebro-topological function} on $S$ is a couple $\theta=(f,\kappa)$, where $f$ is an algebraic function on $S$ and $\kappa$ is a topological function on $S$, such that $U_f\supseteq U_\kappa$. In other words, it is a couple $(f,U)\in \OO_S(S)\times\Open(S)$ such that $f_{\vert U}$ is a unit. We say that $f$ (resp. $U$) is the {\em algebraic part} (resp. the {\em topological part}) of $\theta$.
\end{defn}

Let us denote $\bA (S) $ the set of algebro-topological functions on $S$. It is a commutative monoid with multiplication
$$(f,\kappa)\cdot(f',\kappa')=(f\cdot f',\kappa\cdot\kappa')$$ and unit $(1,1)$. A morphism $h\colon S'\to S$ of ringed spaces induces a monoid homomorphism $$\aligned \bA (S)&\to\bA (S')\\ (f,\kappa)&\mapsto  (h^*(f),h^*(\kappa))\endaligned$$ because $U_{h^*(f)}\supseteq h^{-1}(U_f)\supseteq h^{-1}(U_\kappa)=U_{h^*(\kappa)}$.

\begin{thm} The functor $\bA$ is representable by a ringed space $$\AAA^1_k:=(\PP_1,\OO_{\AAA^1_k}),$$ where $\PP_1=\{ 0<1\}$ and $\OO_{\AAA^1_k}$ is the sheaf of rings on $\PP_1$  given by the rings
\[ \OO_{\AAA^1_k ,{\scriptscriptstyle 0}} =k[x],\quad \OO_{\AAA^1_k ,{\scriptscriptstyle 1}}=k[x,x^{-1}]\] and the natural inclusion $k[x]\to k[x,x^{-1}]$.
\end{thm}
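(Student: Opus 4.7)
The plan is to unwind the datum of a morphism of ringed spaces $h\colon (S,\OO_S)\to (\PP_1,\OO_{\AAA^1_k})$ and identify it with an algebro-topological function. Recall that $\PP_1=\{0<1\}$ has exactly three open subsets: $\emptyset$, $\{1\}$ and $\PP_1$ itself, and the sheaf $\OO_{\AAA^1_k}$ has $\OO_{\AAA^1_k}(\PP_1)=k[x]$, $\OO_{\AAA^1_k}(\{1\})=k[x,x^{-1}]$, with restriction the natural inclusion. A morphism of ringed spaces consists of a continuous map plus a morphism of sheaves of $k$-algebras, so I will treat these two pieces separately and then check that their compatibility is precisely the algebro-topological condition.

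First, I would apply Theorem \ref{P_n} with $n=1$ to identify continuous maps $h\colon S\to \PP_1$ with open subsets $U=h^{-1}(\{1\})$ of $S$. Next, fixing such an $h$, I would show that a morphism of sheaves of $k$-algebras $h_{\#}\colon \OO_{\AAA^1_k}\to h_*\OO_S$ is determined by its two nontrivial components
\[ h_\#^{\PP_1}\colon k[x]\to \OO_S(S),\qquad h_\#^{\{1\}}\colon k[x,x^{-1}]\to \OO_S(U), \]
subject to the compatibility square given by restriction. The top map is determined by $x\mapsto f$ for some $f\in\OO_S(S)$, and commutativity of the square forces $h_\#^{\{1\}}$ to send $x$ to $f_{\vert U}$; for $h_\#^{\{1\}}$ to be a well-defined $k$-algebra morphism out of $k[x,x^{-1}]$, the element $f_{\vert U}$ must be invertible in $\OO_S(U)$. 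So the morphism $h$ corresponds exactly to a pair $(f,U)$ with $f\in\OO_S(S)$, $U\in\Open(S)$, and $f_{\vert U}\in\OO_S(U)^\times$, which is the definition of an algebro-topological function.

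To close the proof I would note the easy equivalence: local invertibility, $U\subseteq U_f$, is equivalent to invertibility of $f_{\vert U}$ in $\OO_S(U)$ (the stalkwise inverses are unique and hence glue), so the condition matches both formulations given in the definition of $\bA(S)$. Finally I would check functoriality: for $h'\colon S'\to S$ the continuous part composes to $h\circ h'$, with associated open subset $(h')^{-1}(U)=U_{h'^*\kappa}$, and the sheaf part sends $x$ to $h'^*(f)$, so the bijection intertwines $h\mapsto (h\circ\_\,)^*$ with the monoid map $(f,\kappa)\mapsto(h'^*(f),h'^*(\kappa))$ on $\bA$. The main subtlety to handle carefully is the last implication in the sheaf-map step, namely that the compatibility square genuinely forces $f_{\vert U}$ to be a unit (rather than allowing any lift); this is exactly what distinguishes ringed-space morphisms from locally ringed-space ones and motivates the very notion of an algebro-topological function.
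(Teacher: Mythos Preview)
Your proof is correct and follows essentially the same approach as the paper: both unwind a morphism $S\to(\PP_1,\OO_{\AAA^1_k})$ into its continuous part (an open subset $U=U_\kappa$) and its sheaf part (the commutative square involving $k[x]\to\OO_S(S)$ and $k[x,x^{-1}]\to\OO_S(U)$), and observe that the latter amounts to an $f\in\OO_S(S)$ with $f_{\vert U}$ invertible. You supply a few extra details the paper leaves implicit (the equivalence $U\subseteq U_f\Leftrightarrow f_{\vert U}\in\OO_S(U)^\times$ and the functoriality check), but the argument is the same.
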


\begin{proof} A morphism of ringed spaces $S\to (\PP_1,\OO_{\AAA^1_k})$ is a couple $(\kappa,\kappa_\#)$, where $\kappa\colon S\to \PP_1$ is a continuous map (i.e., $\kappa$ is a topological function) and $\kappa_\#\colon\OO_{\AAA^1_k}\to \kappa_*\OO_S$ is a morphism of sheaves of $k$-algebras. Now, $\kappa_\#$ is equivalent to giving morphisms of $k$-algebras
\[ k[x]=\OO_{\AAA^1_k ,{\scriptscriptstyle 0}} \to (\kappa_*\OO_S)_0=\OO_S(S),\qquad k[x,x^{-1}]=\OO_{\AAA^1_k ,{\scriptscriptstyle 1}} \to (\kappa_*\OO_S)_1=\OO_S(U_\kappa)\] such that the diagram
\[ \xymatrix{ k[x]\ar[r]\ar[d] & \OO_S(S)\ar[d] \\ k[x,x^{-1}]\ar[r] & \OO_S(U_{\kappa})}\] is commutative; thus  $\kappa_\#$ is equivalent to giving  an element $f\in \OO_S(S)$ (the image of $x$) such that $f_{\vert U_\kappa}$ is a unit.
\end{proof}

\begin{rem} The universal object of $\AAA^1_k$ is given by the couple $(x,U_{\{ 1\}})$, and  one has $U_x=U_{\{1\}}$. Notice that $x$ is the universal object of $\AAA^1_{\text{alg}}$ and $U_{\{ 1\}}$ the universal object of $\AAA^1_{\text{top}}$.
\end{rem}

\begin{defn} We shall denote $$\AAA^n_k:=\AAA^1_k\times_k\overset n\cdots\times_k\AAA^1_k.$$  It represents the functor $\bAn=\bA\times\overset n\cdots\times\bA$.
By definition, $$\AAA^n_k=(\PP_n,\OO_{\AAA^n_k})$$ where $\OO_{\AAA^n_k}$ is the sheaf of rings on $\PP_n$ defined as follows: for each $\delta\in\PP_n$,
\[\OO_{\AAA^n_k,{\scriptscriptstyle\delta}}=k[x_1,\dots,x_n]_{x_{\delta}}\] where $x_{\delta}=\underset{i\in\delta}\prod x_i$, and for any $\delta\leq  \delta'$ the morphism $\OO_{\AAA^n_k,{\scriptscriptstyle\delta}}\to \OO_{\AAA^n_k,{\scriptscriptstyle \delta'}}$ is the  natural morphism $k[x_1,\dots,x_n]_{x_{\delta}}\to k[x_1,\dots,x_n]_{x_{\delta'}}$, $x_j\mapsto x_j$ for any $j$.
\end{defn}

\begin{rem} Since $\delta=\emptyset$ is a minimum of $\PP_n$, one has
\[ \OO_{\AAA^n_k}(\AAA^n_k)= \OO_{\AAA^n_k,{\scriptscriptstyle\emptyset}}=k[x_1,\dots,x_n].\] The universal object of $\AAA^n_k$ is given by the algebro-topological functions $$(x_1,U_{\{1\}}), \dots, (x_n,U_{\{n\}})$$ and one has $U_{x_i}=U_{\{i\}}$.  
\end{rem}

\begin{rem} For any set $\Lambda$ one can define $\AAA^\Lambda_k=(\PP(\Lambda ),\OO_{\AAA^\Lambda_k})$, where $\OO_{\AAA^\Lambda_k}$ is defined by
\[ \OO_{\AAA^\Lambda_k,{\scriptscriptstyle\delta}}=k[x_{\Lambda}]_{x_{\delta}} \] for each subset $\delta$ of $\Lambda$. Thus, one has  \[\Hom_k(S,\AAA^\Lambda_k)= \operatorname{Maps}(\Lambda, \bA(S)).\] 
\end{rem}

\subsection{Comparison between $\AAA^n_k$, $\AAA^n_{\text{\rm alg}}$, $\AAA^n_{\text{\rm top}}$ and $\AAA^n_{k-\text{\rm sch}}$}\label{An-comparisons}$\,$\medskip

The algebraic functions $x_1,\dots,x_n$ on $\AAA^n_k$ define a morphism $$\pi_\text{alg}\colon \AAA^n_k\to\AAA^n_{\text{alg}}$$ whose functorial description is $(\theta_1,\dots,\theta_n)\mapsto (f_1,\dots, f_n)$, with $f_i=$ algebraic part of $\theta_i$.  This morphism is  an homotopy equivalence (in the sense of \cite{Sancho3}). In particular,
\[\Qcoh(\AAA^n_{\text{\rm alg}})\simeq \Qcoh(\AAA^n_k).\]
 Moreover, let us see that $\AAA^n_{\text{alg}}$ may be viewed as the space of ``connected components'' of $\AAA^n_k$. For any ringed space $S$ over $k$, the set $\Hom_k(S,\AAA^n_k)$ has a natural partial order (essentially induced by the partial order of the underlying topological space $\PP_n$ of $\AAA^n_k$), hence a topology. The partial order is:
\[ (\theta_1,\dots,\theta_n)\leq (\theta'_1,\dots,\theta'_n) \Leftrightarrow f_i=f'_i\text{ and } U_i\subseteq U'_i\text{ for every } i=1,\dots,n,\] 
where $\theta_i=(f_i,U_i), \theta'_i=(f'_i,U'_i)$. Let us consider the map 
$$\aligned i(S)\colon \Hom_k(S,\AAA^n_{\text{alg}})&\to \Hom_k(S,\AAA^n_k)\\ (f_1,\dots,f_n)&\mapsto (\theta_1,\dots,\theta_n),\text{ with } \theta_i=(f_i,U_{f_i}).\endaligned$$ 
The composition $\pi_\text{alg}(S)\circ i(S)$ is the identity and $i(S)\circ \pi_\text{alg}(S)\leq \id$.   This means that $\Hom_k(S,\AAA^n_{\text{alg}})$, with the discrete topology, is a deformation retract of $\Hom_k(S,\AAA^n_k)$. The elements in the image of $i(S)$  are precisely the maximal elements of $\Hom_k(S,\AAA^n_k)$ (for the partial order) and each $(\theta_1,\dots,\theta_n)$ is dominated by a unique maximal element. Thus  $\Hom_k(S,\AAA^n_{\text{alg}})$ is identified   with both the set of irreducible components  and   the set of connected components of $\Hom_k(S,\AAA^n_k)$. In conclusion:
\[ \Hom_k(S,\AAA^n_{\text{alg}})=\pi_0\Hom_k(S,\AAA^n_k).\]

The open subsets $U_{\{1\}},\dots,U_{\{n\}}$ of $\AAA^n_k$ define a morphism 
$$\pi_\text{top}\colon \AAA^n_k\to\AAA^n_{\text{top}}$$   functorially described as $(\theta_1,\dots,\theta_n)\mapsto (U_1,\dots, U_n)$, with $U_i=$ topological part of $\theta_i$. This morphism is the identity at the topological level.

Now, let us consider the $k$-scheme $\AAA^n_{k-\text{sch}}:=\Spec[t_1,\dots,t_n]$. The algebro-topological functions $(t_i,U_{t_i})$, $i=1,\dots,n$, of $\AAA^n_{k-\text{sch}}$ define a morphism of ringed spaces over $k$
$$\pi\colon \AAA^n_{k-\text{sch}}\to\AAA^n_k.$$
For any locally ringed space $S$ over $k$, the induced  morphism (composition with $\pi$) $$\Hom_{k-\text{loc}}(S,\AAA^n_{k-\text{sch}})\to \Hom_k(S,\AAA^n_k)$$ is just the map $(f_1,\dots,f_n)\mapsto   (\theta_1,\dots,\theta_n)$, with $\theta_i=(f_i,U_{f_i})$. The morphism $\pi$ has the following universal property: For any locally ringed space $S$ and any morphism of ringed spaces $f\colon S\to \AAA^n_k$ there exists a unique morphism of locally ringed spaces $f_\text{loc}\colon S\to \AAA^n_{k-\text{sch}}$ such that $f\leq \pi\circ f_\text{loc}$.
 
Finally, $\AAA^n_k$ is the finite ringed space associated to the scheme $\AAA^n_{k-\text{sch}}$ and the open covering $\{ \AAA^n_{k-\text{sch}},U_{x_1},\dots,U_{x_n}\}$.  In particular (see \cite{Sancho3}), one obtains an equivalence
\[ \Qcoh(\AAA^n_k)\simeq \Qcoh(\AAA^n_{k-\text{sch}}).\]

\subsection{The multiplicative group}\label{Gm}

\begin{defn} The {\em multiplicative group} over $k$ is the ringed space over $k$:
\[ \G_m^k:=(*,k[x,x^{-1}]).\] The name is justified by the universal property:
\[ \Hom_k(S,\G_m^k)=\OO_S(S)^\times \] for any ringed space $S$ over $k$, where $\OO_S(S)^\times$ denotes the set of units of $\OO_S(S)$. For simplicity, we shall denote $\G_m=\G_m^k$, since $k$ is usually understood.
\end{defn}

\begin{rem} Since $\G_m$ represents the units  of the set $\OO_S(S)$ of algebraic functions, it would have been more natural, as before, to denote it by $\G_m^\text{alg}$ and name it the {\em algebraic} multiplicative group. Let us do so for the moment. One can also ask for the representability of the the functor  $S\mapsto \A(S)^\times$ of topological units and denote its representant by $\G_m^\text{top}$. In fact, since a distributive lattice has only one unit, one has $\A(S)^\times=\{1\}$, and this functor is representable by $\G_m^\text{top} :=(*,k)$. Finally, one might consider the functor $S\mapsto \bA(S)^\times$ of algebro-topological units, ask for its representability and denote by $\G_m$ its representant. But one has an isomorphism
\[ \aligned \OO_S(S)^\times &\overset\sim\to \bA(S)^\times\\ f&\mapsto (f,1)\endaligned \] and hence $\G_m=\G_m^\text{alg}\times_k\G_m^\text{top} = \G_m^\text{alg}$.
\end{rem}

\begin{rem} The functor $S\mapsto \OO_S(S)^\times$ is a functor in (commutative) groups. Thus, $\G_m$ has a (commutative) group structure: one has a multiplication law
\[ \nu\colon\G_m\times_k\G_m\to \G_m,\]   a rational point $(*,k)\to \G_m$,  and an inversion morphism $\G_m\to \G_m$, satisfying the (commutative) group axioms.
\end{rem}

It is well known that giving a $\ZZ$-gradation on a $k$-algebra $A$ is equivalent to giving an action of the multiplicative group scheme $\Spec k[t,t^{-1}]$ on the affine $k$-scheme $\Spec A$. Let us see that this also happens for ringed spaces.

\begin{defn} Let $S$ be a ringed space over $k$. An {\em action} of the multiplicative group $\G_m$ on $S$ is a morphism
\[ \mu\colon \G_m\times_k S\to S\] of ringed spaces over $k$ satisfying the standard axioms of an action; in other words, for any ringed space $T$ over $k$, the map $$\mu(T)\colon \OO_T(T)^\times\times \Hom_k(T,S)\to \Hom_k(T,S)$$ is an action of the group $\OO_T(T)^\times$ on the set $\Hom_k(T,S)$.
\end{defn}

\begin{rem}\label{action} Since $\G_m=(*,k[t,t^{-1}])$, one has $\G_m\times_k S=(S,\OO_S[t,t^{-1}])$ and the action morphism $\mu\colon (S,\OO_S[t,t^{-1}])\to (S,\OO_S)$ is the identity at the topological level (since it must be the identity when taking $t=1$). Thus, the morphism $\mu$  is equivalent to  a morphism of sheaves of $k$-algebras $\mu_\#\colon \OO_S\to \OO_S[t,t^{-1}]$ and the action axioms can be translated into the following conditions: 

(1) (neutral element acts by the identity): The composition $\OO_S\overset{\mu_\#}\to \OO_S[t,t^{-1}]\overset{t\mapsto 1}\to \OO_S$ is the identity. 

(2) (associativity): the diagram 
\[\xymatrix{ \OO_S\ar[r]^{\mu_\#}\ar[d]^{\mu_\#}   & \OO_S[x,x^{-1}]\ar[d]^{\mu_\#\otimes \id} \\ \OO_S[t,t^{-1}]\ar[r]^{\nu_\#\qquad\quad} &  \OO_S[y,y^{-1}]\otimes_{\OO_S}\OO_S[x,x^{-1}]}\] is commutative, where $(\mu_\#\otimes \id)(f x^i)=\mu_\#(f)\otimes x^i$ and $\nu_\#$ is the multiplication of $\G_m$, i.e., $\nu_\#(t)=y\cdot x$.
\end{rem}

\begin{defn} Let $A$ be a $k$-algebra. A $\ZZ$-graded structure on $A$ is a decomposition $A=\underset{i\in\ZZ}\oplus A_i$ of $k$-modules, such that $k\to A$ maps into $A_0$ and $a_i\cdot a_j\in A_{i+j}$ for any $a_i\in A_i,a_j\in A_j$. Let $S$ be a ringed space over $k$. A $\ZZ$-graded  structure on $\OO_S$ means  a $\ZZ$-graded structure on each $k$-algebra $\OO_S(U)$ ($U$ open subset of $S$) such that the restriction morphisms $\OO_S(U)\to\OO_S(V)$ preserve the gradation, for any $V\subseteq U$. Thus $\OO_S$ becomes a sheaf of $\ZZ$-graded $k$-algebras.
\end{defn}

The following proposition is now clear from Remark \ref{action}:
\begin{prop} An action of $\G_m$ on $S$ is equivalent to  a $\ZZ$-graded structure on $\OO_S$.
\end{prop}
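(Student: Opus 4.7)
The plan is to unpack the description in Remark \ref{action} and translate it directly into a graded decomposition. By that remark, giving an action $\mu\colon\G_m\times_k S\to S$ is the same as giving a morphism of sheaves of $k$-algebras $\mu_\#\colon\OO_S\to\OO_S[t,t^{-1}]$ satisfying the two commutative diagrams there (neutral element and associativity). So the proposition will follow once I match such $\mu_\#$'s with $\ZZ$-graded structures on $\OO_S$.

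First, given $\mu_\#$, I would define $k$-linear endomorphisms $\mu_i$ of $\OO_S$ (i.e., of each $\OO_S(U)$, compatibly with restrictions) by expanding
\[ \mu_\#(f)=\sum_{i\in\ZZ}\mu_i(f)\, t^i,\]
noting that only finitely many $\mu_i(f)$ are nonzero for each local section $f$. The neutral axiom (setting $t=1$) translates into $\sum_i\mu_i=\id_{\OO_S}$. The associativity diagram, applied to $f$, gives on the one hand
$(\mu_\#\otimes\id)\mu_\#(f)=\sum_{i,j}\mu_j(\mu_i(f))\, y^j x^i$,
and on the other hand
$\nu_\#(\mu_\#(f))=\sum_i\mu_i(f)(yx)^i=\sum_i\mu_i(f)\, y^i x^i$.
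Comparing coefficients of $y^jx^i$ yields $\mu_j\circ\mu_i=\delta_{ij}\mu_i$, so the $\mu_i$ are pairwise orthogonal idempotents summing to the identity. Setting $A_i:=\Ima\mu_i\subseteq\OO_S$, this gives a direct sum decomposition $\OO_S=\bigoplus_{i\in\ZZ}A_i$ of sheaves of $k$-modules, compatible with restrictions. Because $\mu_\#$ is a morphism of $k$-algebras, from $\mu_\#(fg)=\mu_\#(f)\mu_\#(g)$ one obtains $A_i\cdot A_j\subseteq A_{i+j}$, and from $\mu_\#(1)=1$ one gets $1\in A_0$, so the image of $k\to\OO_S$ lies in $A_0$. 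Hence the data $(A_i)$ form a $\ZZ$-graded structure on $\OO_S$ in the sense of the definition.

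Conversely, given a $\ZZ$-graded structure $\OO_S=\bigoplus_i A_i$, define $\mu_\#\colon\OO_S\to\OO_S[t,t^{-1}]$ on local sections by $\mu_\#(f)=\sum_i f_i\, t^i$, where $f=\sum_i f_i$ is the (locally finite) decomposition with $f_i\in A_i$. The graded conditions $A_i\cdot A_j\subseteq A_{i+j}$ and $k\subseteq A_0$ make $\mu_\#$ into a morphism of sheaves of $k$-algebras; setting $t=1$ recovers $f=\sum f_i$, so the neutral axiom holds; and the associativity diagram becomes the tautology $\sum_i f_i y^i x^i=\sum_i f_i (yx)^i$. These two constructions are clearly inverse to each other: starting from $\mu_\#$ one has $A_i=\{f:\mu_\#(f)=ft^i\}$, and starting from a grading the endomorphisms $\mu_i$ extracted are exactly the projections onto $A_i$.

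The only mildly delicate point is the sheaf-theoretic bookkeeping—that the projections $\mu_i$ defined locally are compatible with restrictions, and that the decomposition $\OO_S=\bigoplus A_i$ agrees section-by-section with the one the Definition asks for—but this is automatic from the fact that $\mu_\#$ is a morphism of sheaves and the equations $\mu_i\mu_j=\delta_{ij}\mu_i$ are pointwise. Hence no genuine obstacle arises and the equivalence is established.
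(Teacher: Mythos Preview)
Your argument is correct and is exactly the route the paper has in mind: the paper itself offers no proof beyond declaring the proposition ``clear from Remark \ref{action}'', and what you have written is precisely the standard unpacking of that remark (coaction $\mu_\#$ $\leftrightarrow$ orthogonal idempotent projections $\mu_i$ $\leftrightarrow$ grading). There is nothing to add.
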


\begin{defn}\label{invariants} Let us consider an action of $\G_m$ on $S$. The {\em sheaf of invariants} $\OO_S^{\G_m}$ is defined by:
\[ \OO_S^{\G_m}:=\Ker (\mu_\# - i)\] where $i\colon\OO_S\to \OO_S[t,t^{-1}]$ is the natural inclusion. In other words, for any open subset $U$ of $S$
\[ \OO_S^{\G_m} (U)= [\OO_S(U)]_0\] where $[\quad]_0$ means the subalgebra of degree zero elements.  We shall define the {\em quotient} $S/\G_m$ as the ringed space
\[  S/\G_m:=(S,\OO_S^{\G_m})\] and one has a natural morphism $S\to S/\G_m$ which is $\G_m$-equivariant (considering the trivial action of $\G_m$ on $S/\G_m$).
\end{defn}

\begin{rem} Let $\pi \colon \G_m\times_kS\to S$ be the natural projection (this is the trivial action of $\G_m$ on $S$). Notice that $\pi_\#\colon\OO_S\to \pi_*(\OO_{\G_m\times_kS})=\OO_S[t,t^{-1}]$ is the natural inclusion (denoted by $i$ in Definition \ref{invariants}). Then one has an exact sequence of ringed spaces over $k$ 
\[ \G_m\times_kS \,\aligned \overset\mu\longrightarrow\\[-10pt]\underset\pi\longrightarrow\endaligned \, S\to S/\G_m.
\]
\end{rem}

\begin{ejem} \label{actionGm} $\G_m$ acts on $\AAA^1_k=(\PP_1,\OO_{\AAA^1_k})$ by homotheties: For each ringed space $T$ over $k$ one has
\[ \aligned \OO_T(T)^\times \times \bA(T)&\to \bA(T)\\ (\lambda, (f,\kappa))&\mapsto \lambda\cdot (f,\kappa):=(\lambda f,\kappa)\endaligned \] which is clearly functorial on $T$. 

In the same way, $\G_m$ acts by homotheties on $\AAA^n_k$: since $\AAA^n_k=\AAA^1_k\times_k\cdots\times_k\AAA^1$, the action on $\AAA^n_k$ is induced by the above action on each  factor. Thus, for each $T$ the action is given by the map
\[ \aligned \OO_T(T)^\times \times \bAn(T) &\to \bAn(T) \\ (\lambda, (\theta_1,\dots,\theta_n))&\mapsto \lambda\cdot (\theta_1,\dots,\theta_n):=(\lambda \cdot \theta_1,\dots,\lambda\cdot\theta_n).\endaligned \]
\end{ejem}

\subsection{The punctured affine space $\AAA^n_k \hskip -1pt {\scriptstyle -\{0\}} $}\label{punctured} $\,$

\begin{defn}  We shall denote by $\AAA^n_k\hskip -1pt {\scriptstyle -\{0\}}$ the open ringed subspace of $\AAA^n_k$ whose underlying topological space is $\PP^*_n$. That is
\[ \AAA^n_k\hskip -1pt {\scriptstyle -\{0\}}=(\PP^*_n,\OO_{\AAA^n_k\hskip -1pt {\scriptstyle -\{0\}}}) \] where
$\OO_{\AAA^n_k\hskip -1pt {\scriptstyle -\{0\}}}$ is the restriction of $\OO_{\AAA^n_k}$ to $\PP^*_n$.
\end{defn}

\begin{rem} One has $\G_m=\AAA^1_k\hskip -1pt {\scriptstyle -\{0\}}$ (as one would desire).
\end{rem}

\begin{rem} By definition, $ \AAA^n_k\hskip -1pt {\scriptstyle -\{0\}}=\AAA^n_k\times_{\AAA^n_\text{top}}  \AAA^n_\text{top}\hskip -1pt {\scriptstyle -\{0\}}$. Hence $ \AAA^n_k\hskip -1pt {\scriptstyle -\{0\}}$ represents the functor of $n$ algebro-topological functions whose topological part is a covering; i.e., one has
\[\Hom_k(S, \AAA^n_k\hskip -1pt {\scriptstyle -\{0\}})= \{ (\theta_1,\dots,\theta_n)\in\bAn(S): U_{\theta_1},\dots, U_{\theta_n}\text{ is a covering of }S\}.\]
The universal object is  the restriction of the universal object of $\AAA^n_k$ to $ \AAA^n_k\hskip -1pt {\scriptstyle -\{0\}}$.
\end{rem}

\begin{rem}\label{re-punctured}[Non-representability]\label{norepresentable}
On locally ringed spaces, the functor $$F(S)=\{ (f_1,\dots,f_n)\in\OO_S(S)^n: U_{f_1},\dots, U_{f_n}\text{ is a covering of }S\}$$ coincides with the functor $$F'(S)=\{\text{Epimorphisms  }\OO_S^{n }\to\OO_S\}$$ and they are both representable by the punctured scheme $\AAA^n_{k-\text{sch}}\hskip -1pt {\scriptstyle -\{0\}}$ which is the complement of the zero section $\Spec k\to \AAA^n_{k-\text{sch}}$.   On ringed spaces, these functors do not coincide ($F$ is only a subfunctor of $F'$) and they are not longer representable (for $n>1$, since for $n=1$ both are representable by $\G_m$). Indeed, assume that $F'$ is representable by a ringed space $(X,\OO_X)$. For any topological space $T$ one has
\[ \Hom_{\text{cont}}(T,X)=\Hom_k((T,0),(X,\OO_X))=F'((T,0))= \{\text{Epimorphisms }0^{n }\to 0\}=*.\] Hence $X=*$, i.e., the representant of $F'$ should be a punctual ringed space $(*,A)$. Then, for any locally ringed space $S$
\[ \Hom_{k-\text{loc}}(S,\AAA^{n }_{k-\text{sch}}\hskip -1pt {\scriptstyle -\{0\}})= F'(S)=\Hom_k(S,(*,A))= \Hom_{k-\text{loc}}(S,\Spec A)\] and then $\AAA^{n }_{k-\text{sch}}\hskip -1pt {\scriptstyle -\{0\}}=\Spec A$, which is absurd because $\AAA^{n }_{k-\text{sch}}\hskip -1pt {\scriptstyle -\{0\}}$ is not an affine scheme for $n>1$. The proof for $F$ is completely analogous.
\end{rem}

\begin{rem}[$\pi_0$-representability]   The set $\Hom_k(S,\AAA^n_k\hskip -1pt {\scriptstyle -\{0\}})$ has a natural partial order, which is the restriction of the partial order given in $\Hom_k(S,\AAA^n_k)$ (see  \ref{An-comparisons}). In the same way as in \ref{An-comparisons},  one has a natural morphism $$\pi\colon \Hom_k(\quad,\AAA^n_k\hskip -1pt {\scriptstyle -\{0\}}) \to F$$ and an inclusion $$i(S)\colon F(S)\to 
 \Hom_k(S,\AAA^n_k\hskip -1pt {\scriptstyle -\{0\}})$$  such that the composition $\pi(S)\circ i(S)$ is the identity and  $i(S)\circ \pi(S)\leq \id$.  Again,   $F(S)$ is identified with both the set of irreducible components  and   the set of connected components of $\Hom_k(S,\AAA^n_k\hskip -1pt {\scriptstyle -\{0\}})$. In conclusion, $F$ is $\pi_0$-representable by $\AAA^n_k\hskip -1pt {\scriptstyle -\{0\}}$:
\[ F(S)=\pi_0\Hom_k(S,\AAA^n_k\hskip -1pt {\scriptstyle -\{0\}}).\]
\end{rem}

\begin{rem}[Comparison with the punctured scheme $\AAA^n_{k-\text{sch}}\hskip -1pt {\scriptstyle -\{0\}}$]  
  
The algebro-topological functions $(x_i,U_{x_i})$, $i=1,\dots,n$, of the scheme $\AAA^n_{k-\text{sch}}\hskip -1pt {\scriptstyle -\{0\}}$ define a morphism 
\[ \pi\colon \AAA^n_{k-\text{sch}}\hskip -1pt {\scriptstyle -\{0\}}\to \AAA^n_k\hskip -1pt {\scriptstyle -\{0\}}.\]
For any locally ringed space over $S$ $k$, the induced  morphism (composition with $\pi$) $$\Hom_{k-\text{loc}}(S,\AAA^n_{k-\text{sch}}\hskip -1pt {\scriptstyle -\{0\}})\to \Hom_k(S,\AAA^n_k\hskip -1pt {\scriptstyle -\{0\}})$$ is just the map $(f_1,\dots,f_n)\mapsto   (\theta_1,\dots,\theta_n)$, with $\theta_i=(f_i,U_{f_i})$. For each morphism of ringed spaces $f\colon S\to \AAA^n_k\hskip -1pt {\scriptstyle -\{0\}}$ there exists a unique morphism of locally ringed spaces $f_\text{loc}\colon S\to \AAA^n_{k-\text{sch}}\hskip -1pt {\scriptstyle -\{0\}}$ such that $f\leq \pi\circ f_\text{loc}$.

Finally, $ \AAA^n_k\hskip -1pt {\scriptstyle -\{0\}}$ is the finite ringed space associated to $ \AAA^n_{k-\text{sch}}\hskip -1pt {\scriptstyle -\{0\}}$ and the affine open covering $U_{x_1},\dots,U_{x_n}$. Hence one has an equivalence
\[ \Qcoh(\AAA^n_{k-\text{sch}}\hskip -1pt {\scriptstyle -\{0\}})\simeq \Qcoh(\AAA^n_{k}\hskip -1pt {\scriptstyle -\{0\}})\]
\end{rem}

\section{The projective space $\Pj^n_k$.}

\begin{defn}  Let us consider the action of $\G_m$ on $\AAA^{n+1}_k$ by homotheties (see Example \ref{actionGm}). It is immediate to see that the open ringed subspace $\AAA^n_k\hskip -1pt {\scriptstyle -\{0\}}$ is invariant by this action (notice that $U_{\lambda\theta}=U_\theta$ for any $\lambda\in\OO_T(T)^\times$, $\theta\in\bA(T)$). We shall define the {\em $n$-dimensional projective space over $k$} as:
\[ \Pj^n_k:=(\AAA^{n+1}_k\hskip -1pt {\scriptstyle -\{0\}})/\G_m.\]
By definition, $$\Pj^n_k=(\PP^*_{n+1},\OO_{\Pj^n_k} )$$ with $\OO_{\Pj^n_k}=(\OO_{\AAA^{n+1}_k\hskip -1pt {\scriptscriptstyle -\{0\}}})^{\G_m}$; thus, for each $\delta\in\PP^*_{n+1}$
\[ \OO_{\Pj^n_k,{\scriptscriptstyle\delta}} = (\OO_{\AAA^{n+1}_k\hskip -1pt {\scriptscriptstyle -\{0\}},{\scriptscriptstyle\delta} })^{\G_m}= [k[x_1,\dots,x_{n+1}]_{x_\delta}]_0,\] and for each $\delta\leq\delta'$ the morphism $ \OO_{\Pj^n_k,{\scriptscriptstyle\delta}}\to  \OO_{\Pj^n_k,{\scriptscriptstyle\delta'}}$ is obtained by taking $[\quad]_0$ in the morphism $k[x_1,\dots,x_{n+1}]_{x_\delta}\to k[x_1,\dots,x_{n+1}]_{x_{\delta'}}$.
\end{defn}

\subsubsection{The canonical invertible module $\OO_{\Pj^n_k}(1)$}  

\begin{defn} The canonical $\OO_{\Pj^n_k}$-module $\OO_{\Pj^n_k}(1)$ is defined as the 1-degree elements of $\OO_{\AAA^{n+1}_k\hskip -1pt {\scriptscriptstyle -\{0\}}}$; that is, for each $\delta\in\PP^*_{n+1}$:
\[ [\OO_{\Pj^n_k}(1)]_{\delta}:=[\OO_{\AAA^{n+1}_k\hskip -1pt {\scriptscriptstyle -\{0\},\delta}}]_1=[k[x_1,\dots,x_n]_{x_{\delta}}]_1\] and for each $\delta\leq\delta'$ the morphism $[\OO_{\Pj^n_k}(1)]_{\delta}\to [\OO_{\Pj^n_k}(1)]_{\delta'}$ is obtained by taking  $[\quad]_1$ in the morphism $\OO_{\AAA^{n+1}_k\hskip -1pt {\scriptscriptstyle -\{0\},\delta}} \to \OO_{\AAA^{n+1}_k\hskip -1pt {\scriptscriptstyle -\{0\},\delta'}}$. It is clear that $\OO_{\Pj^n_k}(1)$ is an $\OO_{\Pj^n_k}$-module.
\end{defn}

\begin{rem}\label{O(1)} Each variable $x_i$ defines by multiplication a morphism $\OO_{\Pj^n_k}  \overset{\cdot x_i}\to  \OO_{\Pj^n_k}(1)$, i.e., a global section of $\OO_{\Pj^n_k}(1)$. This morphism is an isomorphism when restricting to $U_{\{i\}}$, hence  $\OO_{\Pj^n_k}(1)$ is an invertible module (Definition \ref{locallyfree}). Moreover,  $U_{x_i}=U_{\{i\}}$. Thus, the triplet
\[ (\OO_{\Pj^n_k}(1),(x_1,\dots,x_{n+1}), \{ U_{\{1\}},\dots,U_{\{n+1\}}\})\] defines what we shall call an algebro-topological invertible quotient of $\OO_{\Pj^n_k}^{n+1}$. 
\end{rem}

\subsubsection{Invertible quotients and strict invertible quotients}\label{invertiblequotients}

\begin{defn} Let $e\in\Lc(S)$ be a global section of an invertible $\OO_S$-module $\Lc$; it  may be thought as a morphism of $\OO_S$-modules $e\colon\OO_S\to\Lc$. We define
\[ U_e:=\{ s\in S: e_s\colon\OO_{S,s}\to\Lc_s\text{ is an isomorphism}\}.\] Is is immediate to see that $U_e$ is an open subset.  More generally, giving a morphism of $\OO_S$-modules $\OO_S^{n+1}\to\Lc$ is equivalent to giving $n+1$ global sections  ${\underbar e}_{n+1} =(e_1,\dots,e_{n+1})$ of $\Lc$, and we shall denote  
$$\U_{{\underbar e}_{n+1}}:=\{ U_{e_1} ,\dots,U_{e_{n+1}}\}.$$
\end{defn}

\begin{defn}\label{epi-strict} We say that ${\underbar e}_{n+1}$ is an {\em epimorphism} if  for any $s\in S$, the morphism $\OO_{S,s}^{n+1}\to\Lc_s$ is an epimorphism. We say that ${\underbar e}_n$ is an  {\em strict epimorphism} if $\U_{{\underbar e}_{n+1}}$ is a covering of $S$. Any strict epimorphism is an epimorphism, and the converse holds if $S$ is a locally ringed space.
\end{defn}

\begin{defn} Let us consider couples $$(\Lc,{\underbar e}_{n+1})$$ where $\Lc$ is an invertible $\OO_S$-module and ${\underbar e}_{n+1}$ are $n+1$ global sections of $\Lc$. Let us define the equivalence relation:
\[ (\Lc,{\underbar e}_{n+1})\sim(\Lc',{\underbar e}_{n+1}')\] if there exists an isomorphism $\phi\colon \Lc\to\Lc'$ such that $\phi(e_i)=e'_i$. We shall denote $[\Lc,{\underbar e}_{n+1}]$ the equivalence class of a couple $(\Lc,{\underbar e}_{n+1})$.
\end{defn}

It is immediate to see that if $(\Lc,{\underbar e}_{n+1})$ and $(\Lc',{\underbar e}_{n+1}')$ are equivalent, then $ {\underbar e}_{n+1}$ is an epimorphism (resp. a strict epimorphism) if and only if ${\underbar e}_{n+1}'$ is so. Thus, the following definition makes sense
\begin{defn}\label{inv-quotient} An {\em invertible quotient} (resp. {\em a strict invertible quotient}) of $\OO_S^{n+1}$ is an equivalence class $[\Lc,{\underbar e}_{n+1}]$ such that ${\underbar e}_{n+1}$ is an epimorphism (resp. a strict epimorphism).
\end{defn}

 On locally ringed spaces, one has:
\begin{prop}\label{proj-scheme} Let $\Pj^n_{k-\text{\rm sch}}=\Proj k[x_1,\dots,x_{n+1}]$ be the $n$-dimensional projective scheme over $k$. For any locally ringed space $S$ over $k$ one has
\[\aligned  \Hom_{k-\text{\rm loc}}(S,\Pj^n_{k-\text{\rm sch}} )&= \{\text{\rm Invertible quotients of  } \OO_S^{n+1}\}\\ &= \{\text{\rm Strict invertible quotients  } \OO_S^{n+1} \}\\ &=\{ [\Lc,{\underbar e}_{n+1}]: \U_{{\underbar e}_{n+1}} \text{ \rm is a covering}\}.\endaligned \]
\end{prop}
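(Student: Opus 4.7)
The plan is to establish the three equalities in order, noting that the second one is essentially free: by Definition \ref{epi-strict}, on a locally ringed space any epimorphism $\OO_S^{n+1} \to \Lc$ onto an invertible module is automatically strict, i.e.\ the open subsets $U_{e_i}$ where $e_i$ generates $\Lc$ cover $S$ (just stalkwise: if $\OO_{S,s}^{n+1} \to \Lc_s$ is surjective and $\Lc_s \simeq \OO_{S,s}$, then by Nakayama at least one $e_i$ maps to a unit, so $s \in U_{e_i}$). The third equality is simply a reformulation of what it means for $\U_{\underline{e}_{n+1}}$ to be a covering. Hence everything reduces to proving
\[ \Hom_{k\text{-loc}}(S,\Pj^n_{k\text{-sch}}) = \{\text{invertible quotients of }\OO_S^{n+1}\}. \]

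For the forward direction, given a morphism of locally ringed spaces $\phi\colon S \to \Pj^n_{k\text{-sch}}$, I would pull back the canonical line bundle and its standard sections: set $\Lc := \phi^*\OO_{\Pj^n_{k\text{-sch}}}(1)$ and $e_i := \phi^*(x_i) \in \Lc(S)$. Since the sections $x_1,\dots,x_{n+1}$ generate $\OO_{\Pj^n_{k\text{-sch}}}(1)$ at every point and $\phi$ is a morphism of locally ringed spaces (so the stalk maps $\OO_{\Pj^n_{k\text{-sch}},\phi(s)} \to \OO_{S,s}$ are local), the induced map $\OO_{S,s}^{n+1} \to \Lc_s$ is surjective for every $s \in S$. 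Thus $[\Lc,\underline{e}_{n+1}]$ is an invertible quotient; independence of choices is clear.

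For the reverse direction, given a representative $(\Lc,\underline{e}_{n+1})$ of an invertible quotient, I would use the open covering $\{U_{e_i}\}$ of $S$ (available by the strictness observation above) to construct $\phi$ by gluing morphisms to the standard affine charts. On $U_{e_i}$ the section $e_i$ trivializes $\Lc$, so the quotients $e_j/e_i \in \OO_S(U_{e_i})$ are well-defined and furnish, by the universal property of $\Spec k[\tfrac{x_1}{x_i},\dots,\tfrac{x_{n+1}}{x_i}]$ in the category of locally ringed spaces (Proposition \ref{espectroalgebraico}), a morphism of locally ringed spaces $\phi_i\colon U_{e_i} \to D_+(x_i) \subset \Pj^n_{k\text{-sch}}$. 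The compatibility $\phi_i|_{U_{e_i}\cap U_{e_j}} = \phi_j|_{U_{e_i}\cap U_{e_j}}$ follows by direct computation of how the generators $x_\ell/x_i$ and $x_\ell/x_j$ are related on $D_+(x_ix_j)$, so the $\phi_i$ glue to a morphism $\phi\colon S \to \Pj^n_{k\text{-sch}}$. This construction is evidently invariant under the equivalence relation on couples.

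Finally, I would check that the two assignments are mutually inverse. One direction is immediate: starting from $\phi$ and recovering sections via pullback, the gluing of affine charts reproduces $\phi$ by the uniqueness in Proposition \ref{espectroalgebraico}. The other direction follows because pulling back $\OO(1)$ and $x_i$ under the glued $\phi$ recovers $\Lc$ and $e_i$ up to the equivalence relation. The main technical obstacle, which is why locality of $\phi$ is crucial throughout, is precisely the point already highlighted in the introduction and in Remark \ref{norepresentable}: without the local condition one cannot guarantee that $\phi^{-1}(U_{x_i}) = U_{\phi^*(x_i)}$, and the gluing construction would not be forced to respect the given covering. The hypothesis that $S$ is locally ringed removes this gap and makes both assignments functorial inverses.
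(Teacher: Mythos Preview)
Your argument is correct and is the standard one. The paper itself does not supply a proof of this proposition: the first equality is Proposition~\ref{proj-scheme1}, whose proof the paper explicitly leaves to the reader as ``rather easy anyway''; the coincidence of invertible and strict invertible quotients on locally ringed spaces is already noted in Definition~\ref{epi-strict}; and the last line is definitional. Your gluing construction via Proposition~\ref{espectroalgebraico} on the open sets $U_{e_i}$ is exactly the template the paper then adapts in the proof of Theorem~\ref{Pj^n}, so your approach is entirely in the spirit of the surrounding text.
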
 Our aim now is to show that an analogous proposition holds for $\Pj^n_k$ after replacing invertible quotients by algebro-topological invertible quotients.

\subsubsection{Algebro-topological invertible 	quotients}  We see here how to reproduce the results of subsection \ref{invertiblequotients} by replacing the sections of the invertible module by what we shall call the algebro-topological sections, in an analogous way as when we replaced algebraic functions by algebro-topological ones.

\begin{defn} An {\em algebro-topological section} of an invertible $\OO_S$-module $\Lc$ is a couple $$\theta=(e_,U)$$  where $e\in\Lc(S)$ and $U$ is an open subset such that $U_e\supseteq U$ (i.e., $e_{\vert U}\colon\OO_U\to\Lc_{\vert U}$ is an isomorphism).  We shall denote
$$U_\theta:=U=U_e\cap U$$ and name it the {\em topological part} of $\theta$, while $e$ is called the {\em algebraic part} of $\theta$.

If $h\colon S'\to S$ is a morphism of ringed spaces, then $h^*\theta:=(h^*(e),h^{-1}(U))$ is an algebro-topological section of $h^*\Lc$.
\end{defn}

\begin{defn} Let us consider couples $$(\Lc,{\underline \theta}_{n+1})$$ where $\Lc$ is an invertible $\OO_S$-module and  ${\underline \theta}_{n+1}=(\theta_1,\dots,\theta_{n+1})$ are $n+1$ algebro-topological sections of $\Lc$. Two couples $(\Lc,{\underline \theta}_{n+1}) $ and $(\Lc',{\underline \theta}_{n+1}')$ are said  to be {\em equivalent}  if they have the same topological part (i.e., $U_{\theta_i}=U_{\theta'_i}$ for every $i$) and they have equivalent algebraic parts (i.e., there exists an isomorphism $\phi\colon\Lc\to\Lc'$ such that $\phi(e_i)=e'_i$ for any $i$, with $e_i,e'_i$ the algebraic parts of $\theta_i,\theta'_i$). We shall denote
\[ [\Lc,{\underline \theta}_{n+1}]\] the equivalence class of a couple $(\Lc,{\underline \theta}_{n+1})$ under this equivalence relation. 
\end{defn} 
  
\noindent{\bf \ \ Notation.} For each $ {\underline \theta}_{n+1}=(\theta_1,\dots,\theta_{n+1})$ we shall denote
\[\U_{{\underline \theta}_{n+1}}:=\{ U_{\theta_1},\dots,U_{\theta_{n+1}}\}.\]
 
\begin{defn}\label{algtopInvQuot} An {\em algebro-topological invertible quotient} of $\OO_S^{n+1}$ is an equivalence class $[\Lc,{\underline\theta}_{n+1}]$ such that $\U_{{\underline\theta}_{n+1}}$ is a covering of $S$. In other words, it is a couple
\[ ([\Lc,{\underbar e}_{n+1}],{ \U}_{n+1})\] where $[\Lc,{\underbar e}_{n+1}]$ is a strict invertible quotient of $\OO_S^{n+1}$ (Definition \ref{inv-quotient}) and ${ \U}_{n+1}$ is a covering by $n+1$ open subsets refining ${ \U}_{{\underbar e}_{n+1}}$ (i.e., $U_i\subseteq U_{e_i}$ for every $i=1,\dots,n$).

If $h\colon S'\to S$ is a morphism of ringed spaces and $[\Lc,{\underline\theta}_{n+1}]$ is an algebro-topological quotient of $\OO_S^{n+1}$, then $[h^*\Lc,h^*({\underline\theta}_{n+1})]$ is an algebro-topological quotient of $\OO_{S'}^{n+1}$. Thus we have a functor of algebro-topological quotients and we shall see now that it is representable by $\Pj_k^n$.
\end{defn}

\begin{thm}\label{Pj^n} {\rm(compare with Proposition \ref{proj-scheme})} For any ringed space $S$ over $k$ one has:
\[\aligned  \Hom_k(S,\Pj^n_k)&=\{ \text{\rm Algebro-topological invertible quotients of }\OO_S^{n+1}\}\\
&=\{ [\Lc,{\underline \theta}_{n+1}]: \U_{{\underline \theta}_{n+1}} \text{ \rm is a covering}\}.\endaligned\]
\end{thm}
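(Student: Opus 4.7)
The plan is to construct the bijection in two stages: identify the universal algebro-topological invertible quotient on $\Pj^n_k$ and build an inverse map from abstract quotients to morphisms via local trivializations and gluing. On $\Pj^n_k$ the natural candidate is the triple $(\OO_{\Pj^n_k}(1),(x_1,\dots,x_{n+1}),\{U_{\{1\}},\dots,U_{\{n+1\}}\})$ from Remark \ref{O(1)}; since $U_{x_i}=U_{\{i\}}$ and the $U_{\{i\}}$ cover $\PP^*_{n+1}$, this yields an algebro-topological invertible quotient $[\OO_{\Pj^n_k}(1),{\underline\theta}^{\mathrm{univ}}_{n+1}]$ of $\OO_{\Pj^n_k}^{n+1}$ with $\theta_i^{\mathrm{univ}}=(x_i,U_{\{i\}})$. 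For any $f\in\Hom_k(S,\Pj^n_k)$, pullback produces $[f^*\OO_{\Pj^n_k}(1),(f^*\theta^{\mathrm{univ}}_1,\dots,f^*\theta^{\mathrm{univ}}_{n+1})]$, which is an algebro-topological invertible quotient of $\OO_S^{n+1}$ because the pullback of an invertible module is invertible and the opens $f^{-1}(U_{\{i\}})$ cover $S$.

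For the inverse, I would begin with a representative $(\Lc,(e_1,\dots,e_{n+1}),(U_1,\dots,U_{n+1}))$ of an algebro-topological invertible quotient and produce a morphism $f\colon S\to\Pj^n_k$ as follows. At the topological level define $f(s):=\{i:s\in U_i\}$, which lies in $\PP^*_{n+1}$ by the covering condition and is continuous with $f^{-1}(U_{\{i\}})=U_i$. Since each $e_i$ is a unit on $U_i\subseteq U_{e_i}$, there are unique $a_{ji}\in\OO_S(U_i)$ with $e_j|_{U_i}=a_{ji}\cdot e_i|_{U_i}$. On the basic open $U_{\{i\}}\subseteq \PP^*_{n+1}$ I would define
\[ \OO_{\Pj^n_k}(U_{\{i\}})=[k[x_1,\dots,x_{n+1}]_{x_i}]_0 \longrightarrow \OO_S(U_i),\qquad x_j/x_i\mapsto a_{ji}, \]
which makes sense because the ratios $x_j/x_i$ generate the source as a $k$-algebra. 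Compatibility on overlaps $U_i\cap U_j$ reduces to the two-cocycle identities $a_{ji}\cdot a_{ij}=1$ and $a_{ki}=a_{kj}\cdot a_{ji}$, both of which follow from $e_k=a_{ki}e_i=a_{kj}e_j$ together with $e_j=a_{ji}e_i$ and the invertibility of $e_i,e_j$ on $U_i\cap U_j$; hence these local ring maps glue to a morphism of sheaves of $k$-algebras on all of $\PP^*_{n+1}$.

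Independence of the chosen representative is automatic, for an isomorphism $\phi\colon\Lc\to\Lc'$ with $\phi(e_i)=e'_i$ yields the same scalars $a_{ji}$. Mutual inverseness is then essentially tautological. Starting from a morphism $f$ and reading off $a_{ji}:=f^*(x_j/x_i)\in\OO_S(f^{-1}(U_{\{i\}}))$ recovers the original ring map, because the ratios $x_j/x_i$ generate $\OO_{\Pj^n_k,\delta}=[k[x_1,\dots,x_{n+1}]_{x_\delta}]_0$ over $k$ for any fixed $i\in\delta$; conversely, starting from $(\Lc,{\underbar e}_{n+1},\U)$ and pulling back the universal quotient under the $f$ just constructed reproduces $\Lc$ with its sections, since $\OO_{\Pj^n_k}(1)$ is obtained by gluing the $\OO_{U_{\{i\}}}\cdot x_i$ along the transition units $x_j/x_i$, whose pullback gives precisely the gluing $e_j=a_{ji}e_i$ defining $\Lc$. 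The main obstacle is the gluing step of the inverse construction: one must verify that the locally defined ring maps on the basic opens $U_{\{i\}}$ are compatible under restriction to arbitrary $U_\delta$ with $\delta\in\PP^*_{n+1}$, which via the concrete description of $\OO_{\Pj^n_k,\delta}$ reduces to iterated use of the two-cocycle identities above.
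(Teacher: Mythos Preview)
Your argument is correct. The forward map via pullback of the universal triple and the inverse via the explicit topological map $f(s)=\{i:s\in U_i\}$ together with the ring maps $x_j/x_i\mapsto a_{ji}$ on the basic opens $U_{\{i\}}$ are both sound, and the cocycle identities you invoke do give the required compatibility on all $U_\delta$, since $\OO_{\Pj^n_k,\delta}$ is the localization of $[k[x_1,\dots,x_{n+1}]_{x_i}]_0$ at the $x_j/x_i$ for $j\in\delta$ and each $a_{ji}$ becomes invertible on $\cap_{j\in\delta}U_j$.

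The paper takes a somewhat different route for the inverse. Rather than constructing the ring map on $\Pj^n_k$ directly, it lifts locally through the quotient morphism $\pi\colon\AAA^{n+1}_k\hskip -1pt{\scriptstyle -\{0\}}\to\Pj^n_k$: on each $U_i$ the $n+1$ algebro-topological functions $\theta^i_j=(f^i_j,U_j\cap U_i)$ (with $f^i_j$ your $a_{ji}$) determine a morphism $h_i\colon U_i\to\AAA^{n+1}_k\hskip -1pt{\scriptstyle -\{0\}}$ by the already-established universal property of the punctured affine space, and then one composes with $\pi$. The gluing on $U_i\cap U_j$ is then immediate from the fact that $h_i$ and $h_j$ differ by the action of the unit $e_i^{-1}e_j\in\OO_S(U_i\cap U_j)^\times$, which is killed by $\pi$. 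This approach exploits the definition of $\Pj^n_k$ as a $\G_m$-quotient and pushes the compatibility check into the group action, whereas your approach unpacks the structure sheaf of $\Pj^n_k$ explicitly and verifies the cocycle identities by hand. Both yield the same morphism; the paper's version is more conceptual and reuses the functor of points of $\AAA^{n+1}_k\hskip -1pt{\scriptstyle -\{0\}}$, while yours is more self-contained and does not rely on the quotient description.
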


\begin{proof} The triplet $\tau=(\OO_{\Pj^n_k}(1),(x_1,\dots,x_{n+1}), \{ U_{\{1\}},\dots,U_{\{n+1\}}\})$ (see Remark \ref{O(1)}) defines an algebro-topologial invertible quotient of $\OO_{\Pj^n_k}^{n+1}$. By Yoneda, this defines a functorial morphism
\[ \aligned \Hom_k(S,\Pj^n)&\to \{ \text{Algebro-topological invertible quotients of }\OO_S^{n+1}\}\\ h&\mapsto [h^*\tau]\endaligned.\] Let us define the inverse. Let $\psi=[\Lc,{\underline \theta}_{n+1}]$ be an algebro-topological invertible quotient of $\OO_S^{n+1}$. Put $\theta_i=(e_i,U_i)$, $i=1,\dots,n+1$.

Let us fix $i\in\{1,\dots,n+1\}$. By definition, ${e_i}_{\vert U_i}\colon\OO_{U_i}\to\Lc_{\vert U_i}$ is an isomorphism. Hence $f_j^i:={e_i}_{\vert U_i}^{-1}\circ {e_j}_{\vert U_i}$ is an algebraic function on $U_i$ that only depends of the isomorphism class $[\Lc,{\underline \theta}_{n+1}]$. We have then  $n+1$ algebro-topological functions on $U_i$
\[ \theta_1^i,\dots, \theta_{n+1}^i,\qquad \theta_j^i:=(f_j^i,U_j\cap U_i)\] that define  a morphism $h_i\colon U_i\to \AAA^{n+1}_k\hskip -1pt {\scriptstyle -\{0\}}$, hence a morphism $$\pi\circ h_i\colon U_i\to\Pj^n_k,$$  where $\pi\colon \AAA^{n+1}_k\hskip -1pt {\scriptstyle -\{0\}}\to\Pj^n_k$ is the quotient morphism. On the intersections $U_i\cap U_j$ one has that ${h_i}_{\vert U_i\cap U_j}= \lambda\cdot {h_j}_{\vert U_i\cap U_j}$, with $\lambda= {e_i}_{\vert U_i\cap U_j}^{-1}\circ {e_j}_{\vert U_i\cap U_j} \in \OO_S(U_i\cap U_j)^\times$. Hence $\pi\circ h_i$ and $\pi\circ h_j$ coincide on $U_i\cap U_j$. This gives a unique morphism $h_\psi\colon S\to\Pj^n_k$ such that ${h_\psi}_{\vert U_i}=\pi\circ h_i$.

 We leave to the reader to check that the assignations $h\mapsto [h^*\tau]$ and $\psi\mapsto h_\psi$ are mutually inverse.
\end{proof}

\begin{rem}\label{rem-projective}[Non-representability, $\pi_0$-representability and comparison with $\Pj^n_{k-\text{sch}}$]\label{norepresentable2} $\,$

(1) The functors $$F,F'\colon \{\text{Ringed spaces over }k\}\to \{\text{Sets}\}$$ defined by
\[\aligned  F(S)&=\{\text{Invertible quotients of }\OO_S^{n+1 } \}\\ F'(S)&=\{\text{Strict invertible quotients of  }\OO_S^{n+1 }  \}\endaligned \]  are not representable (for $n> 0$, since for $n=0$  they are both representable by $\Pj^0_k=(*,k)$). The proof is completely analogous to that of Remark \ref{norepresentable}.

(2) The set $\Hom_k(S,\Pj^n_k)$ has a natural partial order: 
$$[\Lc,{\underline \theta}_{n+1}]\leq [\Lc',{\underline \theta}'_{n+1}]\text{ iff } [\Lc,{\underbar e}_{n+1}]=[\Lc',{\underbar e'}_{n+1}] \text{ and } U_{\theta_i}\subseteq U_{\theta'_i} \text{ for every } i=1,\dots, n+1,$$ 
where $e_i$ (resp. $e'_i$) is the algebraic part of $\theta_i$ (resp. $\theta'_i$).  Thus, $\Hom_k(\quad,\Pj^n_k )$ is a functor from ringed spaces to posets. Now, one has a natural morphism $$\pi\colon \Hom_k(\quad,\Pj^n_k) \to F'$$ defined as: $\pi(S) ([\Lc,{\underline \theta}_{n+1}])= [\Lc,{\underbar e}_{n+1}]$, with $e_i$ the algebraic part of $\theta_i$. Moreover, one has an inclusion $$i(S)\colon F'(S)\to 
 \Hom_k(S,\Pj^n_k)$$ defined as $i(S)([\Lc,{\underbar e}_{n+1} ] )=[\Lc,{\underline \theta}_{n+1}]$, with $\theta_i=(e_i,U_{e_i})$. The composition $\pi(S)\circ i(S)$ is the identity and  $i(S)\circ \pi(S)\leq \id$.
  This means that $F'(S)$, with the discrete topology, is a deformation retract of $\Hom_k(S,\Pj^n_k )$. The elements in the image of $i(S)$  are precisely the maximal elements of $\Hom_k(S,\Pj^n_k )$ (for the partial order) and each $[\Lc,{\underline \theta}_{n+1}]$ is dominated by a unique maximal element. Thus  $F'(S)$ is identified   with both the set of irreducible components  and   the set of connected components of $\Hom_k(S,\Pj^n_k )$. In conclusion $F'$ is $\pi_0$-representable by $\Pj^n_k$:
\[ F'(S)=\pi_0\Hom_k(S,\Pj^n_k ).\]

(3) Let $\Pj^n_{k-\text{sch}}:=\Proj k[x_1,\dots,x_n]$ be the $n$-dimensional projective space over $k$. The triplet 
$$(\OO_{\Pj^n_{k-\text{sch}}}(1), (x_1,\dots, x_{n+1}),\{ U_{x_1},\dots ,U_{x_{n+1}} \})$$
 gives a morphism 
 $$\pi\colon \Pj^n_{k-\text{sch}}\to \Pj^n_k.$$
  For any locally ringed space $S$ over $k$, the induced  morphism (composition with $\pi$) $$\Hom_{k-\text{loc}}(S,\Pj^n_{k-\text{sch}})\to \Hom_k(S,\Pj^n_k)$$ is just the map $[\Lc,\underbar e_{n+1}] \mapsto   [\Lc,\underline \theta_{n+1}]$, with $\theta_i=(e_i,U_{e_i})$. For each morphism of ringed spaces $f\colon S\to \Pj^n_k$ there exists a unique morphism of locally ringed spaces $f_\text{loc}\colon S\to \Pj^n_{k-\text{sch}}$ such that $f\leq \pi\circ f_\text{loc}$.

Finally, $\Pj^n_k$ is the finite ringed space associated to the scheme $\Pj^n_{k-\text{sch}}$ and its standard affine covering $U_{x_1},\dots ,U_{x_{n+1}}$. Thus, we have an equivalence
\[ \Qcoh(\Pj_k^n)\simeq \Qcoh(\Pj^n_{k-\text{sch}}).\] 
\end{rem}

\section{Grassmannian}\label{Grassmannian-section}

In this section we shall construct the grassmannian ringed space $\Grass_k(r,n)$ and show that has a  universal property on ringed spaces analogous to that of the  grassmannian scheme on schemes. We shall first introduce the ringed space of epimorphisms $k^n\to k^r$; the grassmannian will be the quotient of this space by the action of the linear group.
\subsection{Algebro-topological epimorphisms of type $(r,n)$}

\begin{defn} We shall denote by $\HHom_k(k^n,k^r)$ the ringed space representing de functor
\[ S\mapsto \Hom_{\OO_S}(\OO_S^n,\OO_S^r).\]
It is a punctual ringed space and $\OO_{\HHom_k(k^n,k^r)}$ is the polinomial ring in $r\times n$ variables. One has natural isomorphisms $\HHom_k(k,k^n)=\AAA^n_\text{alg}=\HHom(k^n,k)$.
\end{defn}

\begin{defn}\label{Notations r,n} Recall that we denote by $\Delta_{r,n}$  the set of subsets of $r$ elements of $\Delta_n$.  Each $\delta\in\Delta_{r,n}$ induces a  decomposition $k^n=k^r\oplus k^{n-r}$ (if $e_1,\dots,e_n$ is the basis of $k^n$, then $k^r=<e_j>_{j\in\delta}$, $k^{n-r}=<e_j>_{j\notin \delta}$), hence a decomposition $\OO_S^n=\OO_S^r\oplus \OO_S^{n-r}$ for each ringed space $S$ over $k$. For each morphism of $\OO_S$-modules $\phi\colon \OO_S^n\to\OO_S^r$ and each $\delta\in\Delta_{r,n}$ we shall denote 
\[ \phi_\delta\colon \OO_S^r\to\OO_S^r,\quad \phi_{\delta^c}\colon \OO_S^{n-r}\to\OO_S^r\] 
the restrictions of $\phi$ to the factors of the decomposition  $\OO_S^n=\OO_S^r\oplus \OO_S^{n-r}$ induced by $\delta$. 

Finally, let us recall that we denote by $\PP_{r,n}$ the space of parts of $\Delta_{r,n}$ and by $\PP_{r,n}^*$ the open subset of non-empty parts of $\Delta_{r,n}$.
\end{defn}

\begin{defn} Let $\delta\in\Delta_{r,n}$. We define the algebraic function 
\[ \aligned \text{det}_\delta\colon \HHom_k(k^n,k^r) &  \to \HHom_k(k,k)=\AAA^1_{k-\text{alg}}\\ \phi &\mapsto {\det}_\delta\phi:=\wedge^r\phi_\delta \endaligned .\]
\end{defn}

\begin{defn} We say that $\phi\colon\OO_S^n\to\OO_S^r$ is an {\em epimorphism} if it is an stalkwise epimorphism. We say that $\phi$ is an {\em strict epimorphism} if $\wedge^r\phi\colon \OO_S^{\Delta_{r,n}}\to\OO_S$ is a strict epimorphism, i.e. $\U_{\wedge^r\phi}$ is a covering of $S$ (see Definition \ref{epi-strict}). Notice that $\wedge^r\phi$ is given by the collection of algebraic functions $\{\det_\delta \phi\}_{\delta\in\Delta_{r,n}}$.
\end{defn}

Any strict epimorphism is an epimorphism. The converse holds if $S$ is a locally ringed space.

\begin{defn} An {\em algebro-topological epimorphism of type $(r,n)$} on $S$ is a couple $(\phi,\U_{\Delta_{r,n}})$ where:

(1) $\phi\colon\OO_S^n\to\OO_S^r$ is a morphism of modules, 

(2) $\U_{\Delta_{r,n}}=\{ U_\delta\}$ is an open covering of $S$ (indexed by $\Delta_{r,n}$), and 

(3) $\U_{\Delta_{r,n}}\subseteq \U_{\wedge^r\phi}$ (i.e.,   $U_\delta\subseteq U_{\det_\delta\phi}$ for each $\delta\in\Delta_{r,n}$). 

Notice that conditions (2) and (3) imply that $\phi$ is a strict epimorphism. We shall denote
\[ \Epim_{\OO_S}^{\text{alg-top}}(\OO_S^n,\OO_S^r)\] the set of algebro-topological epimorphism of type $(r,n)$ on $S$.
\end{defn}

A morphism of ringed spaces  $h\colon T\to S$ induces a map $$\Epim_{\OO_S}^{\text{alg-top}}(\OO_S^n,\OO_S^r)\to \Epim_{\OO_T}^{\text{alg-top}}(\OO_T^n,\OO_T^r)$$ by sending $(\phi,\U_{\Delta_{r,n}})$ to $(h^*\phi, h^{-1}(\U_{\Delta_{r,n}}))$ (notice that $ \det_\delta(h^*\phi)=h^*(\det_\delta\phi)$).

\begin{thm} The functor 
\[\aligned F\colon \{ \text{\rm Ringed spaces over }k\}&\to {\rm Sets}\\ S&\mapsto F(S)= \Epim_{\OO_S}^{\text{alg-top}}(\OO_S^n,\OO_S^r)\endaligned \] is representable. 
\end{thm}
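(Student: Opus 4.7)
The plan is to construct the representant as a fiber product of ringed spaces already produced in the paper, reusing the recipe that built $\AAA^n_k$ from $\AAA^n_{\text{alg}}$ and $\AAA^n_{\text{top}}$: fuse algebraic and topological data along a map of algebraic functions. Let $N=|\Delta_{r,n}|=\binom{n}{r}$, and label the coordinates of $\AAA^N_{\text{alg}}$ by the symbols $\{T_\delta\}_{\delta\in \Delta_{r,n}}$. The $N$ algebraic functions $\{\det_\delta\}_{\delta\in \Delta_{r,n}}$ on $\HHom_k(k^n,k^r)$ define a morphism $\det\colon \HHom_k(k^n,k^r)\to \AAA^N_{\text{alg}}$, $T_\delta\mapsto \det_\delta$; on the other side, the canonical morphism $\pi_{\text{alg}}\colon \AAA^N_k\hskip -1pt {\scriptstyle -\{0\}}\to \AAA^N_{\text{alg}}$ from subsection \ref{An-comparisons} discards the topological part of each algebro-topological function. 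Then I would define
\[
\EEpim_k(k^n,k^r)\;:=\;\HHom_k(k^n,k^r)\times_{\AAA^N_{\text{alg}}} \AAA^N_k\hskip -1pt {\scriptstyle -\{0\}}.
\]

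The claim that this ringed space represents $F$ would follow almost formally from the universal properties already established. By the fiber-product formula for ringed spaces recorded in Section 1, a morphism $S\to \EEpim_k(k^n,k^r)$ is the same as a pair consisting of a morphism $S\to \HHom_k(k^n,k^r)$, i.e.\ a module map $\phi\colon\OO_S^n\to\OO_S^r$, together with a morphism $S\to \AAA^N_k\hskip -1pt {\scriptstyle -\{0\}}$, i.e.\ a family of algebro-topological functions $\{(g_\delta,U_\delta)\}_{\delta\in \Delta_{r,n}}$ whose topological parts $\{U_\delta\}$ cover $S$, subject to the compatibility that both data induce the same morphism to $\AAA^N_{\text{alg}}$. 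By the universal property of $\AAA^N_{\text{alg}}$ this compatibility is exactly the equation $g_\delta=\det_\delta\phi$ for every $\delta$, and then the inclusion $U_\delta\subseteq U_{g_\delta}$ becomes $U_\delta\subseteq U_{\det_\delta \phi}$. This is literally the datum of an algebro-topological epimorphism of type $(r,n)$, and the assignment is functorial in $S$.

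Unwinding the fiber product gives the concrete model $\EEpim_k(k^n,k^r)=(\PP^*_{r,n},\OO)$ with
\[
\OO_\Lambda\;=\;k[X_{r,n}]\otimes_{k[T_\delta]}k[T_\delta]_{\prod_{\delta\in\Lambda}T_\delta}\;=\;k[X_{r,n}]\bigl[\det_\delta^{-1}:\delta\in\Lambda\bigr]
\]
for each $\Lambda\in\PP^*_{r,n}$, and the restriction maps are the obvious localizations; the universal object is the generic matrix $X_{r,n}$ paired with the tautological covering $\{U_{\{\delta\}}\}_{\delta\in \Delta_{r,n}}$ of $\PP^*_{r,n}$. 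No step is really hard: the only delicate point is the bookkeeping that identifies the fiber-product compatibility with the equalities $g_\delta=\det_\delta\phi$, and this is just an unwrapping of the universal property of $\AAA^N_{\text{alg}}$. The choice of $\PP^*_{r,n}$ (rather than $\PP_{r,n}$) for the underlying topology is automatic, because the covering requirement built into the definition of algebro-topological epimorphism forces the continuous map $S\to \PP_{r,n}$ into $\PP^*_{r,n}$, exactly as for $\AAA^n_k\hskip -1pt {\scriptstyle -\{0\}}$ in subsection \ref{punctured}.
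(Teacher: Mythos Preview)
Your proof is correct and follows essentially the same approach as the paper: the paper also constructs the representant as the fibred product $\HHom_k(k^n,k^r)\times_{\AAA^{\Delta_{r,n}}_{k-\text{alg}}} \AAA^{\Delta_{r,n}}_k\hskip -1pt {\scriptstyle -\{0\}}$ using the morphisms $\phi\mapsto\wedge^r\phi$ and $(f,\U)\mapsto f$. You have in fact supplied more detail than the paper's proof, which simply declares the fibred product and leaves the verification of the universal property implicit.
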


\begin{proof}   Let us consider the morphisms
\[ \aligned\HHom_k(k^n,k^r)&\to\HHom_k(k^{\Delta_{r,n}},k)=\AAA^{\Delta_{r,n}}_{k-\text{alg}}\\ \phi &\mapsto \wedge^r\phi\endaligned \]
\[ \aligned \AAA^{\Delta_{r,n}}_k\hskip -1pt {\scriptstyle -\{0\}}&\to \AAA^{\Delta_{r,n}}_{k-\text{alg}}\\ (f,\U) &\mapsto f\endaligned \] Then $F$ is representable by the fibred product
\[ \HHom_k(k^n,k^r)\times_{\AAA^{\Delta_{r,n}}_{k-\text{alg}}} \AAA^{\Delta_{r,n}}_k\hskip -1pt {\scriptstyle -\{0\}}.\]
\end{proof}

\begin{defn} We shall denote by $\EEpim_k^{\text{alg-top}}(k^n,k^r)$ the representant of $F$. The proof of the theorem shows that
\[ \EEpim_k^{\text{alg-top}}(k^n,k^r)=(\PP^*_{r,n},\OO_{\EEpim})\]
where $\OO_{\EEpim}$ is the sheaf of rings on $\PP^*_{r,n}$ (see Definition \ref{Notations r,n}) defined  as follows: for each $\underline\delta =\{\delta_1,\dots,\delta_l\}\in \PP_{r,n}^*$, $\OO_{\EEpim,\underline\delta}$ is the localization of the polynomial ring $\OO_{\HHom_k(k^n,k^r)}$ by the algebraic functions $\det_{\delta_1},\dots,\det_{\delta_l}$. For each $\underline\delta\leq \underline{\delta'}$ the morphism $\OO_{\EEpim,\underline\delta}\to \OO_{\EEpim,\underline{\delta'}}$ is the natural one.

The universal object is the couple $(\Phi,\U_{\Delta_{r,n}})$ where:

(1) $\Phi\colon \OO_{\EEpim}^n\to\OO_{\EEpim}^r$ is a strict epimorphism, which is the pullback of the universal object of $\HHom_k(k^n,k^r)$ by the natural morphism $\EEpim_k^{\text{alg-top}}(k^n,k^r)\to\HHom_k(k^n,k^r)$.

(2) $\U_{\Delta_{r,n}}=\{ U_\delta\}$ is the universal covering of $\PP^*_{r,n}$; that is, $U_\delta=U_{\{\delta\}}.$ 

One has $\U_{\Delta_{r,n}}=\U_{\wedge^r\Phi}$.
\end{defn}

\subsection{Linear group}\label{GL}

\begin{defn} The functor $$\aligned \bGL_r\colon\{\text{Ringed spaces over } k \}&\to \{\text{Groups}\}
\\ S&\mapsto \Aut_{\OO_S}(\OO_S^r)=\operatorname{GL}_r(\OO_S(S))\endaligned$$
is representable by the punctual ringed space over $k$
\[ \GL_r:=(*,k[x_{ij}]_{\text{det}})\] where $x_{ij}$ are $r\times r$ variables and $\text{det}$ is the determinant of the matrix $(x_{ij})$.
\end{defn}

\begin{defn} An {\em action} of $\GL_r$ on a ringed space $S$ over $k$ is a morphism of ringed spaces
\[\mu\colon \GL_r\times _k S\to S\] satisfying the standard axioms of an action. Since $\GL_r$ is a punctual ringed space, the action $\mu$ is equivalent to a morphism of sheaves of algebras
\[ \mu_\#\colon \OO_S\to\OO_S\otimes_k k[x_{ij}]_\text{det}.\]  
The {\em sheaf of invariants} $\OO_S^{\GL_r}$ is defined by:
\[ \OO_S^{\GL_r}:=\Ker (\mu_\# - i)\] where $i\colon\OO_S\to \OO_S\otimes_k k[x_{ij}]_\text{det}$ is the natural inclusion (i.e., $i(a)=a\otimes 1$).  We shall define the {\em quotient }$S/\GL_r$ as the ringed space
\[  S/\GL_r:=(S,\OO_S^{\GL_r})\] and one has a natural morphism $S\to S/\GL_r$ which is $\GL_r$-equivariant (considering the trivial action of $\GL_r$ on $S/\GL_r$).
\end{defn}

\begin{ejems}\label{Gl-homomorphisms}  \begin{enumerate}\item The left action of $\GL_r$  on itself is defined by the multiplication law of $\GL_r$: $\mu(g,g')=g\circ g'$. One has that
\[\GL_r/\GL_r=(*,k)\] since $(k[x_{ij}]_\text{det})^{\GL_r}=k$. We shall always assume this action of $\GL_r$ on itself unless otherwise specified.

\item Let $S_{\text{trivial}}$ be a ringed space over $k$ endowed with the trivial action of $\GL_r$. Then  
\[ (\GL_r\times_kS_{\text{trivial}})/\GL_r = S_{\text{trivial}}.\] If $S$ is a ringed space with an action of $\GL_r$, then one has an equivariant isomorphism $\GL_r\times_k S\overset\sim\to \GL_r\times_k S_{\text{trivial}}$, given by $(g,s)\mapsto (g,g^{-1}\cdot s)$, and then
\[ (\GL_r\times_k S)/\GL_r\simeq S_{\text{trivial}}.\]
\item $\GL_r$ acts on $\HHom_k(k^n,k^r) $ by left composition: $\mu(g,\phi)=g\circ\phi$.
\end{enumerate}
\end{ejems}

\subsection{Grassmannian}

\begin{defn} One has a natural action of $\GL_r$ on $\EEpim_k^{\text{alg-top}}(k^n,k^r)$, namely
\[\aligned \GL_k\times_k  \EEpim_k^{\text{alg-top}}(k^n,k^r) &\to  \EEpim_k^{\text{alg-top}}(k^n,k^r) \\ (g,(\phi,\U))&\mapsto (g\circ\phi,\U)\endaligned\] (notice that $\U_{\wedge^r(g\circ\phi)}=\U_{\wedge^r\phi}$). We define the {\em grassmannian  ringed space} $\Grass_k(r,n)$ as the quotient
\[ \Grass_k(r,n):=\EEpim_k^{\text{alg-top}}(k^n,k^r)/\GL_r.\]
By definition, 
\[ \Grass_k(r,n)=(\PP^*_{r,n},\OO_{\Grass_k(r,n)})\] 
where  $\OO_{\Grass_k(r,n)}=\OO_{\EEpim}^{\GL_r}$.
\end{defn}

Let us see the local structure of the quotient morphism $\pi\colon \EEpim_k^{\text{alg-top}}(k^n,k^r)\to\Grass_k(r,n)$.

\begin{defn} $\PP^*_{r,n}$ is covered by the open subsets $U_{\{\delta\}}$, $\delta\in\Delta_{r,n}$. We shall denote by $\EEpim_k^\delta(k^n,k^r)$ the open ringed subspace of $\EEpim_k^{\text{alg-top}}(k^n,k^r)$ whose underlying topological space is $U_{\{\delta\}}$:
\[ \EEpim_k^\delta(k^n,k^r):=(U_{\{\delta\}},{\OO_{\EEpim}}_{\vert U_{\{\delta\}}}).\] 
Analogously, $\Grass_k^\delta(r,n)$ denotes the open ringed space of $\Grass_k(r,n)$ with underlying topological space $U_{\{\delta\}}$.
\end{defn}

\begin{rem} $\EEpim_k^\delta(k^n,k^r)$ represents the subfunctor of $\EEpim_k^{\text{alg-top}}(k^n,k^r)$ of the couples $(\phi,\U_{\Delta_{r,n}})\in \Epim_{\OO_S}^{\text{alg-top}}(\OO_S^n,\OO_S^r)$ such that $U_\delta=S$. In particular, it is a $\GL_r$-invariant open ringed subspace of $\EEpim_k^{\text{alg-top}}(k^n,k^r)$ and
\[ \EEpim_k^\delta(k^n,k^r)/\GL_k =\Grass_k^\delta(r,n).\]
\end{rem}

\begin{prop} One has a $\GL_r$-equivariant isomorphism
\[ \EEpim_k^\delta(k^n,k^r)\simeq \GL_r\times_k X_\delta\] where $X_\delta$ is a ringed space endowed with the trivial action of $\GL_r$ (the explicit description of $X_\delta$ shall be given in the proof). Consequently
\[\Grass_k^\delta(r,n)\simeq X_\delta \] and the quotient morphism $\pi\colon\EEpim_k^{\text{alg-top}}(k^n,k^r)\to \Grass_k(r,n)$ is locally trivial: $$\pi^{-1}(\Grass^\delta_k(r,n))\simeq \GL_r\times_k \Grass^\delta_k(r,n).$$
\end{prop}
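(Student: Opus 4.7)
The plan is to apply Yoneda by defining $X_\delta$ via its functor of points. Concretely, let $X_\delta$ represent the subfunctor of $\Epim^{\text{alg-top}}_{\OO_S}(\OO_S^n,\OO_S^r)$ given by
\[
S \mapsto \{(\psi,\U_{\Delta_{r,n}}) : U_\delta = S \text{ and } \psi_\delta = \id_{\OO_S^r}\}.
\]
The normalization $\psi_\delta = \id$ reduces the data of $\psi$ to its $\delta^c$-component $\psi_{\delta^c}$, parametrized by $\HHom_k(k^{n-r},k^r) \simeq \AAA^{r(n-r)}_{k-\text{alg}}$. So $X_\delta$ is representable, constructed as a fibre product of $\AAA^{r(n-r)}_{k-\text{alg}}$ with the open ringed subspace of $\AAA^{\Delta_{r,n}}_k\hskip -1pt {\scriptstyle -\{0\}}$ cut out by $U_\delta = S$, exactly as $\EEpim_k^{\text{alg-top}}(k^n,k^r)$ was built. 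In particular, its underlying topological space is $U_{\{\delta\}} \subseteq \PP^*_{r,n}$.

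The main step is to produce, by Yoneda, mutually inverse functorial bijections on $T$-points. In one direction I send
\[(g,(\psi,\U)) \mapsto (g\circ\psi,\U);\]
this lands in $\EEpim_k^\delta(k^n,k^r)(T)$ because $\det_\epsilon(g\circ\psi) = \det(g)\cdot \det_\epsilon(\psi)$ with $\det(g)$ a unit, hence $U_{\det_\epsilon(g\psi)} = U_{\det_\epsilon(\psi)} \supseteq U_\epsilon$ for every $\epsilon$. In the other direction I send
\[(\phi,\U) \mapsto \bigl(\phi_\delta,\,(\phi_\delta^{-1}\circ\phi,\U)\bigr),\]
which is well-defined precisely because $U_\delta = T$ forces $\phi_\delta \in \GL_r(\OO_T(T))$. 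The matrix identity $\phi = \phi_\delta\cdot(\phi_\delta^{-1}\phi)$ makes these two assignments mutually inverse, and equivariance is immediate: the left $\GL_r$-action on $\GL_r \times_k X_\delta$ (left translation on the first factor, trivial on $X_\delta$) corresponds under the bijection to the action $g'\cdot\phi = g'\circ\phi$ on $\EEpim_k^\delta(k^n,k^r)$.

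The stated consequences now follow formally. By Example \ref{Gl-homomorphisms}(2), since $X_\delta$ carries the trivial $\GL_r$-action,
\[\Grass_k^\delta(r,n) = \EEpim_k^\delta(k^n,k^r)/\GL_r \simeq (\GL_r\times_k X_\delta)/\GL_r \simeq X_\delta,\]
whence $\pi^{-1}(\Grass_k^\delta(r,n)) = \EEpim_k^\delta(k^n,k^r) \simeq \GL_r \times_k \Grass_k^\delta(r,n)$, giving the local triviality of $\pi$. The only mildly delicate point will be to set up $X_\delta$ so that the algebraic constraint $\psi_\delta = \id$ and the topological constraint $U_\delta = S$ interact cleanly with the fibre-product construction; once that is in place, the rest is a straightforward ``pivot at $\delta$'' decomposition of the universal matrix $\Phi$, transparent at the level of stalks of $\OO_{\EEpim}$.
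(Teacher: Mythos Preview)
Your proof is correct and follows essentially the same strategy as the paper: normalize at $\delta$ via the decomposition $\phi\mapsto(\phi_\delta,\phi_\delta^{-1}\circ\phi)$, observe that the covering constraints are unchanged under multiplication by $g\in\GL_r$ (since $\det_{\delta'}(g\phi)=\det(g)\cdot\det_{\delta'}(\phi)$), and invoke Example~\ref{Gl-homomorphisms}(2) for the quotient. The only cosmetic difference is that the paper records $X_\delta$ more explicitly as $\HHom_k(k^{n-r},k^r)\times_{\AAA^{\Delta_{r,n}\smallsetminus\{\delta\}}_{k\text{-}\mathrm{alg}}}\AAA^{\Delta_{r,n}\smallsetminus\{\delta\}}_k$, dropping the index $\delta$ from the covering data (since $U_\delta=S$ is forced), whereas you keep the full index set and work inside the open subspace $U_{\{\delta\}}$; the two descriptions are canonically identified.
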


\begin{proof} Let us consider the algebraic function $\det_\delta\colon \HHom_k(k^n,k^r)\to\AAA^1_\text{alg}$ and let us denote $\HHom^\delta_k(k^n,k^r)=\HHom_k(k^n,k^r)\times_{\AAA^1_\text{alg}}\G_m$. It represents the subfunctor of $\HHom_k(k^n,k^r)$ given by the morphisms $\phi\colon\OO_S^n\to\OO_S^r$ such that $\phi_\delta$ is an isomorphism. $\GL_r$ acts on $\HHom^\delta_k(k^n,k^r)$ by left composition. One has an  isomorphism  
\[ \aligned\HHom_k^\delta(k^n,k^r)&\to \GL_r\times_k\HHom_k(k^{n-r},k^r) \\ \phi&\mapsto (\phi_\delta,\phi_\delta^{-1}\circ \phi_{\delta^c})\\ g+g \cdot\psi &\leftarrow (g,\psi)\endaligned\] which is $\GL_r$-equivariant, considering the trivial action on $\HHom_k(k^{n-r},k^r)$.
For every $\phi=g+g \cdot\psi$, the condition $U_{\delta'}\subseteq U_{\det_{\delta'}\phi}$ is equivalent to the condition $U_{\delta'}\subseteq U_{\det_{\delta'}(1+\psi)}$. 
Thus we have a bijection between the set of couples $(\phi,\{U_{\delta'}\}_{\delta'\in\Delta_{r,n}})\in\EEpim^\delta_k(k^n,k^r)$ and the set of triplets $(g,\psi,\{\U_{\delta'}\}_{\delta'\neq \delta})$ such that $U_{\delta'}\subseteq U_{\det_{\delta'}(1+\psi)}$, whose inverse maps  a triplet $(g,\psi,\{\U_{\delta'}\}_{\delta'\neq \delta})$   to the couple $(g+g\cdot\psi,\{\U_{\delta'}\}_{\delta'\neq \delta}\cup\{U_\delta=S\})$. Hence, if we consider the morphisms
 \[ \aligned \HHom_k(k^{n-r},k^r)&\to \AAA^{\Delta_{r,n}{\scriptscriptstyle-\{\delta\}}}_{k-\text{alg}} \\ \psi&\mapsto ({\det}_{\delta'}(1+\psi))_{\delta'\neq\delta}\endaligned\]
 \[ \aligned \AAA^{\Delta_{r,n}{\scriptscriptstyle-\{\delta\}}}_{k} &\to \AAA^{\Delta_{r,n}{\scriptscriptstyle-\{\delta\}}}_{k-\text{alg}}\\ (f,\U)&\mapsto f \endaligned\] it suffices to define $X_{\delta}$ as their fibred product
 \[ X_{\delta}:= \HHom_k(k^{n-r},k^r)\times_{ \AAA^{\Delta_{r,n}{\scriptscriptstyle-\{\delta\}}}_{k-\text{alg}} }\AAA^{\Delta_{r,n}{\scriptscriptstyle-\{\delta\}}}_{k}.\]
\end{proof} 
\subsection{Algebro-topological quotients of rank $r$}

Let $\E$ be a locally free module of rank $r$ on a ringed space $S$. Let $\phi\colon \OO_S^n\to\E$ be a morphism of modules and $\wedge^r\phi\colon \OO_S^{\Delta_{r,n}}\to\wedge^r\E$ the induced morphism. Notice that $\wedge^r\E$ is an invertible module.
 
\medskip
\noindent{\bf\ \ Notation}. For each $\delta\in\Delta_{r,n}$, let $\phi_\delta\colon\OO_S^r\to\E$ be the restriction of $\phi$ to the factor $\OO_S^r$ of the decomposition $\OO_S^n=\OO_S^r\oplus\OO_S^{n-r}$ induced by $\delta$. Then $\wedge^r\phi\colon \OO_S^{\Delta_{r,n}}\to\wedge^r\E$ is the collection of sections $\wedge^r\phi_\delta$ of $\wedge^r\E$. 

\begin{defn}\label{epi-strict-r} We say that $\phi$ is an {\em epimorphism} if it is a stalkwise epimorphism, This is equivalent to say that $\wedge^r\phi$ is an epimorphism. We say that $\phi$ is a  {\em strict epimorphism} if $\wedge^r\phi$ is a strict epimorphism, i.e. if $\U_{\wedge^r\phi}$ is a covering (see Definition \ref{epi-strict}). Any strict epimorphism is an epimorphism, and the converse holds if $S$ is a locally ringed space.
\end{defn}

\begin{defn} Let us consider couples $$(\E,\phi)$$ where $\E$ is a locally free module of rank $r$ and $\phi\colon\OO_S^n\to\E$ is a morphism of modules.  Let us define the equivalence relation:
\[ (\E, \phi)\sim(\E',\phi')\Leftrightarrow\text{ there exists an isomorphism }\psi\colon \E\to\E' \text{ such that }\psi\circ\phi=\phi' \]  We shall denote $[\E,\phi]$ the equivalence class of a couple $(\E,\phi)$ under this equivalence relation.
\end{defn}

It is immediate to see that if $(\E,\phi)$ and $(\E',\phi')$ are equivalent, then $ \phi$ is an epimorphism (resp. a strict epimorphism) if and only if $\phi'$ is so. Thus, the following definition makes sense.
\begin{defn} A  {\em locally free quotient of rank   $r$ of $\OO_S^n$} (resp. an {\em   strict locally free  quotient  of rank $r$ of $\OO_S^{n}$}) is an equivalence class $[\E,\phi]$ such that $\phi$ is an epimorphism (resp. a strict epimorphism).
\end{defn}

\begin{prop}\label{grass-scheme} Let $\Grass_{k-\text{\rm sch}}(r,n)$ be the grassmannian scheme over $k$. For any locally ringed space $S$ over $k$ one has
\[\aligned  \Hom_{k-\text{\rm loc}}(S,\Grass_{k-\text{\rm sch}}(r,n)) )&= \{\text{\rm Locally free quotients of  rank $r$ of  } \OO_S^{n}\}\\ &= \{\text{\rm Strict locally free quotients of  rank $r$ of  } \OO_S^{n} \}\\ &=\{ [\E,\phi]: \U_{\wedge^r\phi} \text{ \rm  is a covering}\}.\endaligned \]
\end{prop}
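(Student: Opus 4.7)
The plan is to split the proposition into the two obvious equalities and the one substantive equality. The equality of (2) with (3) and of (3) with (4) is immediate from Definition \ref{epi-strict-r}: on a locally ringed space every epimorphism of locally free modules is strict, and $\phi$ being a strict epimorphism is tautologically equivalent to $\U_{\wedge^r\phi}$ covering $S$. Hence the only real content is showing that $\Grass_{k-\text{sch}}(r,n)$ represents the functor of locally free rank $r$ quotients of $\OO_S^n$ on the category of \emph{all} locally ringed spaces over $k$; this is the grassmannian analogue of Proposition \ref{proj-scheme1}.

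The approach I would take parallels the scheme-theoretic case. First I would recall the standard open affine cover $\{U_\delta\}_{\delta \in \Delta_{r,n}}$ of $\Grass_{k-\text{sch}}(r,n)$, where $U_\delta$ is the locus over which $\phi_\delta$ is an isomorphism and where $U_\delta \simeq \AAA^{r(n-r)}_{k-\text{sch}}$. By Proposition \ref{espectroalgebraico}, morphisms of locally ringed spaces $S \to U_\delta$ correspond bijectively to morphisms $\psi : \OO_S^{n-r} \to \OO_S^r$, and via the rule $\psi = \phi_\delta^{-1}\circ\phi_{\delta^c}$, also to equivalence classes $[\E,\phi]$ of rank $r$ quotients for which $\phi_\delta$ is an isomorphism.

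The forward map $\Hom_{k-\text{loc}}(S,\Grass_{k-\text{sch}}(r,n)) \to \{[\E,\phi]\}$ is then the pullback of the universal locally free rank $r$ quotient. For the inverse, given $[\E,\phi]$ with $\phi$ a stalkwise epimorphism, I would set $V_\delta := U_{\wedge^r\phi_\delta} \subseteq S$, the open subset where $\phi_\delta$ is an isomorphism. The locally ringed hypothesis combined with $\phi$ being a stalkwise epimorphism ensures that $\{V_\delta\}_{\delta \in \Delta_{r,n}}$ covers $S$: at any stalk $\OO_{S,s}$ a surjection $\OO_{S,s}^n \twoheadrightarrow \E_s$ between finitely generated free modules over the local ring admits, by Nakayama applied to the residue field $\OO_{S,s}/\mathfrak m_s$, some $r$-element subset $\delta$ whose corresponding columns map to a basis of $\E_s$. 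On each $V_\delta$ the map $\phi_\delta^{-1} \circ \phi_{\delta^c}$ then determines, via the identification above, a morphism $V_\delta \to U_\delta \hookrightarrow \Grass_{k-\text{sch}}(r,n)$ of locally ringed spaces.

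The remaining and main technical point is to verify that these local morphisms glue on the overlaps $V_\delta \cap V_{\delta'}$ and that the two constructions are mutually inverse. Gluing reduces to the observation that the transition functions of the scheme $\Grass_{k-\text{sch}}(r,n)$ between the charts $U_\delta$ and $U_{\delta'}$ are precisely the change-of-trivialization formulas relating $\phi_\delta^{-1}\circ\phi_{\delta^c}$ and $\phi_{\delta'}^{-1}\circ\phi_{{\delta'}^c}$, and so are automatically respected by data coming from a single equivalence class $[\E,\phi]$. Bijectivity then follows from the fact that, for any $f \in \Hom_{k-\text{loc}}(S,\Grass_{k-\text{sch}}(r,n))$, the open subsets $V_\delta$ attached to the pullback quotient $f^*(\E_{\text{univ}},\phi_{\text{univ}})$ coincide with $f^{-1}(U_\delta)$, reducing the statement to the representability of each affine chart $U_\delta$ already granted by Proposition \ref{espectroalgebraico}. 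The principal obstacle is this gluing step, which depends crucially both on the grassmannian scheme being built out of affine pieces along minor-determined opens and on the fact that stalkwise epimorphisms of free modules over local rings admit, at each stalk, an invertible $r\times r$ minor — the precise point at which the locally ringed hypothesis is used.
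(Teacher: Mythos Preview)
The paper does not actually give a proof of this proposition: it is stated without a proof environment, and the identical earlier Proposition~\ref{grass-scheme1} is explicitly left to the reader (``We leave the proofs---rather easy anyway---to the reader''). So there is nothing to compare against on the paper's side.

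Your sketch is correct and is exactly the standard argument one would supply. The reduction of the second and third equalities to Definition~\ref{epi-strict-r} is right, and the substantive first equality is handled in the expected way: pull back the universal quotient for one direction; for the other, use that over a local ring a surjection $\OO_{S,s}^n\twoheadrightarrow\E_s$ onto a free module of rank $r$ has some invertible $r\times r$ minor (Nakayama), so the opens $V_\delta=U_{\wedge^r\phi_\delta}$ cover $S$, then invoke Proposition~\ref{espectroalgebraico} on each affine chart $U_\delta\simeq\AAA^{r(n-r)}_{k-\text{sch}}$ and glue. Your remark that $V_\delta=f^{-1}(U_\delta)$ is where the \emph{local} part of ``locally ringed morphism'' enters a second time is also spot on and is precisely what fails for general ringed spaces (cf.\ the discussion in the Introduction and Remark~\ref{norepresentable3}). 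Nothing is missing; this is the proof the paper is tacitly assuming.
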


\begin{defn}\label{algtopQuot} An {\em  algebro-topological quotient of rank $r$ of $\OO_S^n$} is a couple
\[ ([\E,\phi],\U_{\Delta_{r,n}})\] where:

(1) $[\E,\phi]$ is an equivalence class (with $\E$ a locally free module of rank $r$ and $\phi\colon\OO_S^n\to\E$ a morphism of modules),

(2) $\U_{\Delta_{r,n}}=\{ U_\delta\}$ is a covering of $S$, indexed by $\Delta_{r,n}$, and

(3) $\U_{\Delta_{r,n}}\leq \U_{\wedge^r\phi}$, that is, $U_\delta\subseteq U_{\wedge^r\phi_\delta}$ for every $\delta\in\Delta_{r,n} $.

$[\E,\phi]$ is called the {\em algebraic part} of the algebro-topological quotient and $\U_{\Delta_{r,n}}$ the {\em topological part}. Notice that conditions (2) and (3) imply that $[\E,\phi]$ is a strict locally free quotient of rank $r$ of $\OO_S^n$.
\end{defn} 

If $h\colon S'\to S$ is a morphism of ringed spaces, and $\psi=([\E,\phi],\U_{\Delta_{r,n}})$ is an algebro-topological quotient of rank $r$ of $\OO_S^n$ then 
\[ h^*\psi:=  (h^*[\E,\phi],h^{-1}(\U_{\Delta_{r,n}}))\] is an algebro-topological quotient of rank $r$ of $\OO_{S'}^n$.

\subsection{The universal bundle on $\Grass_k(r,n)$}
 
Let us denote, for short,  $$\EEpim=\EEpim_k^{\text{alg-top}}(k^n,k^r)$$ and let $(\Phi,\U_{\Delta_{r,n}})$ be the universal object; recall that 
\[\Phi\colon \OO_{\EEpim}^n\to\OO_{\EEpim}^r\] is a strict epimorphism, $\U_{\Delta_{r,n}}=\{ U_{\{\delta\}}\}$ and  $U_{\{\delta\}}=  U_{\det_\delta\Phi}$. 

The epimorphism $\Phi$ may be viewed as a morphism over $\EEpim$:
\[ \xymatrix{ \EEpim\times_k\AAA^n_{k-\text{alg}}\ar[rr]^\Phi\ar[rd] & & \EEpim\times_k\AAA^r_{k-\text{alg}}\ar[ld]
\\ & \EEpim & }\] which is described by the morphism of functors $$((\phi,\U),f)\mapsto ((\phi,\U),\phi\circ f)$$
(we are thinking $f\in \AAA^n_{k-\text{alg}}=\HHom_k(k,k^n)$ and analogously for $\AAA^r_{k-\text{alg}}$)
This triangle is $\GL_r$-equivariant, acting trivially on $\AAA^n_{k-\text{alg}}$ and by the left on  $\AAA^r_{k-\text{alg}}=\HHom_k(k,k^r)$. Moreover,   each $\delta$ induces a  morphism over $\EEpim$ 
$$1\times\delta\colon \EEpim\times_k\AAA^r_{k-\text{alg}} \to \EEpim\times_k\AAA^n_{k-\text{alg}}$$ which is $\GL_r$-equivariant (acting trivially on $\AAA^r_{k-\text{alg}}$) and whose composition with $\Phi$ is an isomorphism over $\EEpim^\delta$.

Taking the quotient by $\GL_r$ one obtains  morphisms  over $\Grass_k(r,n)$:
\[\xymatrix{  \Grass_k(r,n)\times_k \AAA^r_{k-\text{alg}}\ar[r]^{\overline{1\times\delta}}\ar[rd] & \Grass_k(r,n)\times_k \AAA^n_{k-\text{alg}}\ar[r]^{\bar\Phi}  \ar[d] &   (\EEpim\times_k\AAA^r_{k-\text{alg}})/\GL_r\ar[ld]\\ &\Grass_k(r,n) & } \] and the composition $\overline\Phi\circ\overline{1\times\delta}$ is an isomorphism over $\Grass^\delta_k(r,n)$.

\begin{defn} Let us denote $\operatorname{Q}=(\EEpim\times_k\AAA^r_{k-\text{alg}})/\GL_r$ and let  $\Q$ be the sheaf of sections of $\operatorname{Q}\to\Grass_k(r,n)$. Then  $\Q$ is a locally free $\OO_{\Grass_k(r,n)}$-module of rank $r$. The morphism $\overline\Phi$ may be viewed as a  morphism of modules $\overline\Phi\colon \OO_{\Grass_k(r,n)}^n\to\Q$ and it satisfies $U_{\{\delta\}}=U_{\wedge^r\overline\Phi_\delta}$. In conclusion
\[ ([\Q,\overline\Phi],\U_{\Delta_{r,n}})\] is an algebro-topological quotient of rank $r$ of $\OO_{\Grass_k(r,n)}^n$. This is called the {\em universal algebro-topological quotient} on the grassmannian.
\end{defn}
 
\begin{thm}\label{Grass-thm}{\rm (compare with Proposition \ref{grass-scheme})} $\Grass_k(r,n)$ represents the functor of algebro-topological quotients of rank $r$ of the free module of rank $n$. That is
\[ \Hom_k(S,\Grass_k(r,n))=\{ \text{\rm Algebro-topological quotients of rank } r \text { \rm  of }\OO_S^n\}.\]
\end{thm}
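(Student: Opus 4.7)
The universal object $\tau=([\Q,\overline\Phi],\U_{\Delta_{r,n}})$ on $\Grass_k(r,n)$ gives, via Yoneda's lemma, a functorial map
\[
\Hom_k(S,\Grass_k(r,n))\;\longrightarrow\;\{\text{algebro-topological quotients of rank }r\text{ of }\OO_S^n\},\qquad h\longmapsto h^*\tau,
\]
since pullback preserves locally free modules of rank $r$, exterior powers, and open coverings. The plan is to exhibit an inverse and verify that both compositions are the identity.

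Given $\psi=([\E,\phi],\U_{\Delta_{r,n}})$ with $\U_{\Delta_{r,n}}=\{U_\delta\}_{\delta\in\Delta_{r,n}}$ a covering of $S$ with $U_\delta\subseteq U_{\wedge^r\phi_\delta}$, first I would show that on $U_\delta$ the restricted map $\phi_\delta|_{U_\delta}\colon\OO_{U_\delta}^r\to\E_{|U_\delta}$ is an isomorphism: since $\E$ is locally free of rank $r$, a morphism between locally free modules of the same rank $r$ is an isomorphism exactly where its $r$-th exterior power is, and by hypothesis $\wedge^r\phi_\delta$ is invertible on $U_\delta$. Setting $\Psi^\delta:=\phi_\delta^{-1}\circ\phi_{|U_\delta}\colon\OO_{U_\delta}^n\to\OO_{U_\delta}^r$, the couple $(\Psi^\delta,\{U_{\delta'}\cap U_\delta\}_{\delta'\in\Delta_{r,n}})$ is an algebro-topological epimorphism of type $(r,n)$ on $U_\delta$ (since $\det_{\delta'}\Psi^\delta=(\wedge^r\phi_\delta|_{U_\delta})^{-1}\wedge^r\phi_{\delta'}|_{U_\delta}$ is a unit on $U_{\delta'}\cap U_\delta$, with $U_\delta=U_{\det_\delta\Psi^\delta}$). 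Equivalently, this is an element of $\EEpim_k^\delta(k^n,k^r)(U_\delta)$, so it defines a morphism $\tilde h_\delta\colon U_\delta\to\EEpim_k^\delta(k^n,k^r)$; composing with the quotient $\pi\colon \EEpim\to\Grass_k(r,n)$ yields $h_\delta:=\pi\circ\tilde h_\delta\colon U_\delta\to\Grass_k^\delta(r,n)$.

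Next I would glue the $h_\delta$ to a global morphism. On the overlap $U_\delta\cap U_{\delta'}$, the composition $g_{\delta\delta'}:=\phi_\delta^{-1}\circ\phi_{\delta'}\colon\OO^r\to\OO^r$ is a unit in $\GL_r(\OO_S(U_\delta\cap U_{\delta'}))$ and satisfies $\Psi^\delta_{|U_\delta\cap U_{\delta'}}=g_{\delta\delta'}\circ\Psi^{\delta'}_{|U_\delta\cap U_{\delta'}}$, i.e.\ $\tilde h_{\delta|U_\delta\cap U_{\delta'}}=g_{\delta\delta'}\cdot \tilde h_{\delta'|U_\delta\cap U_{\delta'}}$ under the $\GL_r$-action on $\EEpim$. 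Since $\pi$ is $\GL_r$-equivariant with trivial action on the target, $h_\delta$ and $h_{\delta'}$ agree on the intersection, and hence glue to a morphism $h_\psi\colon S\to\Grass_k(r,n)$. Independence of the choice of representative in the equivalence class $[\E,\phi]$ is immediate: an isomorphism $\E\xrightarrow{\sim}\E'$ matching $\phi$ with $\phi'$ identifies $\phi_\delta$ with $\phi'_\delta$, hence does not change $\Psi^\delta$.

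Finally, I would check that $\psi\mapsto h_\psi$ and $h\mapsto h^*\tau$ are mutually inverse. One direction: for $\psi=([\E,\phi],\U_{\Delta_{r,n}})$, the algebro-topological quotient $h_\psi^*\tau$ has topological part $\{h_\psi^{-1}(U_{\{\delta\}})\}=\{U_\delta\}$ (by construction of $\tilde h_\delta$) and algebraic part whose restriction to each $U_\delta$ is $\phi_\delta\circ\Psi^\delta=\phi_{|U_\delta}$, canonically identified with $[\E,\phi]_{|U_\delta}$ via $\phi_\delta$; these local identifications glue to an isomorphism of equivalence classes giving $h_\psi^*\tau=\psi$. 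The other direction follows from the universal properties of $\EEpim$ and of the quotient $\pi$, together with the local triviality $\pi^{-1}(\Grass_k^\delta(r,n))\simeq\GL_r\times_k\Grass_k^\delta(r,n)$: any $h\colon S\to\Grass_k(r,n)$ restricted to $h^{-1}(\Grass_k^\delta(r,n))$ admits, after choosing the trivialization provided by the algebraic part of $h^*\tau$, the unique lift $\tilde h_\delta$ produced by the inverse construction applied to $h^*\tau$, so $h_{h^*\tau}=h$. The principal point requiring care is the compatibility of these local lifts on overlaps, for which the key input is precisely the $\GL_r$-equivariance and the local triviality of $\pi$ established earlier.
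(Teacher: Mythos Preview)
Your proof is correct and follows essentially the same approach as the paper's: both use the universal quotient to define the forward map via Yoneda, invert $\phi_\delta$ on $U_\delta$ to produce local lifts $U_\delta\to\EEpim$, and glue after passing to the $\GL_r$-quotient using that the local lifts differ by elements of $\GL_r$ on overlaps. You supply more detail than the paper (the justification that $\phi_\delta$ is an isomorphism on $U_\delta$, independence of the representative, and a sketch of the verification that the constructions are mutually inverse), but the argument is the same.
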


\begin{proof} By Yoneda, the universal algebro-topological quotient $u=([\Q,\Phi],\U_{\Delta_{r,n}})$  defines a functorial morphism
\[\aligned \Hom_k(S,\Grass_k(r,n))&\to \{ \text{\rm Algebro-topological quotients of rank } r \text { \rm  of }\OO_S^n\}\\ h &\mapsto h^*u.\endaligned\] Let us construct the inverse. Let $\theta= ([\E,\phi],\U_{\Delta_{r,n}})]$ be an algebro-topological quotient of rank $r$ of $\OO_S^n$. For each $\delta\in \Delta_{r,n}$, $\phi_\delta\colon\OO_S^r\to\E$ is an isomorphism on $U_\delta$. Hence we have a strict epimorphism
\[ \OO_{U_\delta}^n\overset{{\phi_\delta}_{\vert U_\delta}^{-1}\circ \phi_{\vert U_\delta}}\longrightarrow \OO_{U_\delta}^r\] which, together with the covering $U_\delta\cap \U_{\Delta_{r,n}}$, defines a morphism
\[ h_\delta\colon U_\delta\to\EEpim_k^{\text{alg-top}}(k^n,k^r)\] and, then a morphism $\pi\circ h_\delta\colon U_\delta\to\Grass_k(r,n)$. If we take a different $\delta'$, then $h_\delta$ and $h_{\delta'}$ differ, on $U_\delta\cap U_{\delta'}$, in an element of $\GL_r(\OO_S(U_\delta\cap U_{\delta'}))$; hence $\pi\circ h_\delta$ and $\pi\circ h_{\delta'}$ agree on $U_\delta\cap U_{\delta'}$. This defines a morphism $h_\theta\colon S\to\Grass_k(r,n)$ (unique such that ${h_\theta}_{\vert U_\delta}=\pi\circ h_\delta$).
We leave the reader to check that the assignations $h\mapsto h^*u$, $\theta\mapsto h_\theta$ are mutually inverse.
\end{proof}

\begin{rem}[Non-representability, $\pi_0$-representability and comparison with $\Grass_{k-\text{sch}}(r,n)$]\label{norepresentable3} $\,$
We reproduce Remarks \ref{re-punctured} and \ref{rem-projective} for the grassmannian. The proofs are completely analogous.

(1) The functors $$F,F'\colon \{\text{Ringed spaces over }k\}\to \text{Sets}$$ defined by
\[\aligned  F(S)&=\{\text{Locally free quotients of rank $r$ of }\OO_S^{n } \}\\ F'(S)&=\{\text{Strict locally free quotients of rank $r$ of   }\OO_S^{n }  \}\endaligned \]  are not representable (for $n> r$, since for $n=r$  they are both representable by $(*,k)$).

(2) The set $\Hom_k(S,\Grass_k(r,n))$ has a natural partial order (the partial order induced by inclusion of the topological parts and equality of the algebraic parts); one has a natural morphism $$\pi\colon \Hom_k(\quad,\Grass_k(r,n)) \to F'$$ and  $F'$ is $\pi_0$-representable by $\Grass_k(r,n)$:
\[ F'(S)=\pi_0\Hom_k(S,\Grass_k (r,n)).\]

(3) Let $\Grass_{k-\text{sch}}(r,n)$ be the grassmannian scheme representing (in the category of $k$-schemes) the functor of locally free quotients of rank $r$ of the free module of rank $n$. Let $\Phi_\text{sch}\colon\OO_{\Grass_{k-\text{sch}}(r,n)}^n\to \Q_\text{sch}$ be the universal quotient  and $\{\Grass^\delta_{k-\text{sch}}(r,n)\}_{\delta\in\Delta_{r,n}}$ the standard affine open covering ($\Grass^\delta_{k-\text{sch}}(r,n)$ is the set of points where $(\Phi_\text{sch})_\delta$ is an isomorphism). These data define a morphism  
$$\pi\colon \Grass_{k-\text{sch}}(r,n)\to \Grass_k(r,n).$$ For any locally ringed space $S$ over $k$, the induced  morphism $$\Hom_{k-\text{loc}}(S,\Grass_{k-\text{sch}}(r,n))\to \Hom_k(S,\Grass_k(r,n))$$ is just the map $[\E,\phi] \mapsto   ([\E,\phi],\U_{\wedge^r\phi})$. For each morphism of ringed spaces $f\colon S\to \Grass_k(r, n)$ there exists a unique morphism of locally ringed spaces $f_\text{loc}\colon S\to \Grass_{k-\text{sch}}(r, n)$ such that $f\leq \pi\circ f_\text{loc}$.

Finally, $\Grass_k(r,n)$ is the finite ringed space associated to the scheme $\Grass_{k-\text{sch}}(r,n)$ and the affine covering $\{\Grass^\delta_{k-\text{sch}}(r,n)\}_{\delta\in\Delta_{r,n}}$. Thus, we have an equivalence
\[ \Qcoh(\Grass_k(r,n))\simeq \Qcoh(\Grass_{k-\text{sch}}(r,n)).\] 
\end{rem}

\subsection{The Pl\"ucker embedding}

We have already considered the morphism
\[ \aligned\HHom_k(k^n,k^r)&\to\HHom_k(k^{\Delta_{r,n}},k)=\AAA^{\Delta_{r,n}}_{k-\text{alg}}\\ \phi &\mapsto \wedge^r\phi\endaligned. \] In the same way we may define the Pl\"ucker-type morphism
\[ \aligned P\colon\EEpim_k(k^n,k^r)&\to\EEpim_k(k^{\Delta_{r,n}},k)=\AAA^{\Delta_{r,n}}_k \hskip -1pt {\scriptstyle -\{0\}}\\ (\phi,\U) &\mapsto (\wedge^r\phi,\U).\endaligned \] 
 $\GL_r$ acts on $\EEpim_k(k^n,k^r)$ by the left and $\G_m$ acts on $\AAA^{\Delta_{r,n}}_k \hskip -1pt {\scriptstyle -\{0\}}$ by homotheties. Moreover one has the morphism of groups
 \[ \aligned\operatorname{Det}\colon \GL_r&\to\G_m\\ g&\mapsto \det g.\endaligned\]
 
$P$ is equivariant with respect to  $\operatorname{Det}$. That is, one has 
 \[ P(g\cdot(\phi,\U))=(\det g)\cdot P(\phi,\U).\] Thus, one obtains a morphism between the quotients
 \[\overline P\colon \Grass_k(r,n)\to \Pj^{\binom nr -1}_k\] which is called the {\em Pl\"ucker embedding of the grassmannian}. Its functorial description is
 \[\overline P([\E,\phi],\U)= ([\wedge^r\E,\wedge^r\phi],\U).\] Notice that $P$ and $\overline P$ are the identity on the underlying topological spaces. 
 
The Pl\"ucker embedding  $\overline P$ is compatible with the usual Pl\"ucker embedding of schemes $\overline P_\text{sch}\colon \Grass_{k-\text{sch}}\to \Pj^{\binom nr -1}_{k-\text{sch}} $;   that is, one has a commutative diagram
\[\xymatrix{\Grass_{k-\text{sch}}(r,n)\ar[r]\ar[d]_{\overline P_\text{sch}} & \Grass_k(r,n)\ar[d]^{\overline P} \\ \Pj^{\binom nr -1}_{k-\text{sch}} \ar[r] & \Pj^{\binom nr -1}_k}\] since, for any locally ringed space $S$ over $k$, the diagram
\[\xymatrix{\Hom_{k-\text{loc}}(S,\Grass_{k-\text{sch}}(r,n)) \ar[r]\ar[d] & \Hom_k(S,\Grass_k(r,n))\ar[d]\\ 
 \Hom_{k-\text{loc}}(S,\Pj_{k-\text{sch}}^{\binom nr -1})  \ar[r] & \Hom_k(S,\Pj^{\binom nr -1}_k)}\] is commutative.
 
\section{Stanley-Reisner theory}\label{Stanley-Reisner}

For all this section, $k$ denotes a field. Let us recall some notations: $\PP_n$ is the set of subsets of $\Delta_n=\{ 1,\dots,n\}$, with the topology associated to the partial order given by inclusion, and $\PP^*_n$ is the set of non-empty subsets of $\Delta_n$, which is an open subset of $\PP_n$.   For each $i\in \Delta_n$, $U_{\{i\}}$ denotes the smallest open subset of $\PP_n$ containing $\{ i\}$. Thus, $U_{\{i\}}$ is the set of subsets of $\Delta_n$ containing the element $i$ and $\PP^*_n$ is covered by $U_{\{1\}}, \dots, U_{\{n\}}$.
 
\begin{defn}  By a {\em simplicial complex} we shall mean a finite abstract simplicial complex, i.e., a non empty closed subset $K$ of $\PP_n $, for some $n$. By a {\em reduced simplicial complex} we mean a non empty closed subset $K^*$ of  $\PP^*_n$. Obviously, any simplical complex $K$ produces a reduced one $K^*=K-\{*\}$ (except when $K=\{*\}$) and conversely, a reduced simplicial complex $K^*$ gives a simplicial complex $K=K^*\cup \{*\}$.\end{defn} 

\begin{defn}  Let $K$ be a simplicial complex. For each $p\in K$ we shall denote by $U_p$ (resp. $C_p$) the smallest open (resp. closed) subset of $K$ containing $p$. In other words
\[ U_p=\{ q\in K: q\geq p\},\qquad C_p=\{ q\in K: q\leq p\}. \] The {\em link of $p$} is
\[ \operatorname{link}_K(p)=\{ q\in K^*:  U_p\cap U_q\neq\emptyset\text{ and } 
C_p\cap C_q=\{*\}\}. \] It is a closed subset of $K^*$. If $K^*$ is a reduced simplicial complex, for each $p\in K^*$ we shall denote
\[\operatorname{link}_{K^*}(p)=\operatorname{link}_K(p) \qquad (K=K^*\cup\{*\}).\]
A simplicial complex $K$ is called {\em Cohen-Macaulay} (over $k$) if for any $p\in K$
\[ \widetilde H_i(\operatorname{link}_K(p),k)=0, \text{ for  every }i<\dim \operatorname{link}_K(p),\] where $\widetilde H_i$ denotes reduced homology.
A reduced simplicial complex $K^*$ is called {\em Cohen-Macaulay} (over $k$) if it is pure and for any $p\in K^*$
\[ \widetilde H_i(\operatorname{link}_{K^*}(p),k)=0, \text{ for  every }i<\dim \operatorname{link}_{K^*}(p).\]
\end{defn}

Let us consider the $n$-dimensional affine $k$-scheme $\AAA^n_{k-\text{sch}}=\Spec k[x_1,\dots,x_n]$. The open subsets $U_{x_1}\dots,U_{x_n}$ define a continuous surjective map
\[ \pi\colon \AAA^n_{k-\text{sch}}\to \PP_n.\]  

\begin{defn} Let $K$ be a closed subset of $\PP_n$. Then $\pi^{-1}(K)$ is a closed subset of $\AAA^n_{k-\text{sch}}$, which defines a---unique---reduced closed subscheme $\pi^{-1}(K)_\text{sch}$ of $\AAA^n_{k-\text{sch}}$. This closed subscheme is defined by a radical ideal $I$ of $k[x_1,\dots,x_n]$. This is the {\em Stanley-Reisner ideal} associated to the abstract simplicial complex $K$, and it is ususally denoted by $I_K$. Thus
\[ \pi^{-1}(K)_\text{sch}=\Spec k[x_1,\dots,x_n]/I_K  \] and $k[x_1,\dots,x_n]/I_K$ is the {\em Stanley-Reisner ring} (over $k$) of $K$.
\end{defn}

 Reisner Theorem states that $K$ is a Cohen-Macaulay simplical complex if and only if $\pi^{-1}(K)_\text{sch}$ is a Cohen-Macaulay scheme.
 
 Now let us consider the punctured scheme $\AAA^n_k\hskip -1pt {\scriptstyle -\{0\}}$; the covering $U_{x_1},\dots,U_{x_n}$ defines a continuous surjective map
\[ \pi\colon \AAA^n_{k-\text{sch}}\hskip -1pt {\scriptstyle -\{0\}}\to \PP_n^*.\] 

As before, if $K^*$ is a closed subset of $\PP_n^*$, then $\pi^{-1}(K^*)$ is a closed subset of $\AAA^n_k\hskip -1pt {\scriptstyle -\{0\}}$ which defines a - unique - reduced closed subscheme $\pi^{-1}(K^*)_\text{sch}$ of $\AAA^n_k\hskip -1pt {\scriptstyle -\{0\}}$. Then one has (see \cite[Exercise 5.3.15]{BH}) 
\[ K^*\text{ is Cohen-Macaulay } \Leftrightarrow  \pi^{-1}(K^*)_\text{sch} \text{ is a Cohen-Macaulay scheme}.\]
 Now, let us take the projective scheme $\Pj^{n-1}_{k-\text{sch}}=\Proj k[x_1,\dots,x_n]$. The standard affine covering $U_{x_1},\dots ,U_{x_n}$ of $\Pj^{n-1}_{k-\text{sch}}$  defines a continuous surjective map
\[ \overline\pi\colon \Pj^{n-1}_k \to \PP_n^* \] and one has a commutative diagram
\[ \xymatrix{ \AAA^n_k\hskip -1pt {\scriptstyle -\{0\}}\ar[r]^\pi\ar[d] & \PP_n^*\\ \Pj^{n-1}_{k-\text{sch}}\ar[ur]_{\overline\pi} & }\] A closed subset $K^*$ of $\PP^*_n$ defines a reduced closed subscheme ${\overline\pi}^{-1}(K^*)_\text{sch}$ of $\Pj^{n-1}_{k-\text{sch}}$ and one has:
\[ K^*\text{ is Cohen-Macaulay } \Leftrightarrow  {\overline\pi}^{-1}(K^*)_\text{sch} \text{ is a Cohen-Macaulay scheme}\] since ${\overline\pi}^{-1}(K^*)_\text{sch}$ is Cohen-Macaulay if and only if $\pi^{-1}(K^*)_\text{sch}$ is so.

\subsection{A general problem}

Let $X$ be a smooth $k$-scheme and let $U_1,\dots, U_n$ be affine open (non empty) subsets. Let us denote $D_i=X-U_i$ the complementary closed subset, with the reduced scheme structure, which is a reduced positive divisor. Let 
\[\pi\colon X\to \PP_n\] be the continuous map associated to $U_1,\dots,U_n$. Let us asume that $\pi$ is surjective and let  $K$ be a closed subset of $\PP_n$. Let $\pi^{-1}(K)_\text{sch}$ be  the  associated reduced closed subscheme of $X$. In this case, the Reisner ideal is
\[ \mathcal I_K=\underset{q\notin K}\sum \mathcal I_{D_q},\text{ with } D_q= \sum_{i\in q} D_i .\]

The natural question is how the Cohen-Macaulicity of $K$ is related to that of $\pi^{-1}(K)_\text{sch}$. Does Reisner criterion hold? Some obvious transversality hypothesis on the divisors $D_i$ are necessary to be assumed: since for any $p\in\PP_n$, $C_p$ is a Cohen-Macaulay simplicial complex, one should ask $\pi^{-1}(C_p)_\text{sch}$ to be Cohen-Macaulay; now  
\[ \pi^{-1}(C_p)_\text{sch}=\underset {i\notin p}\bigcap D_i;\] Hence, if we want $\pi^{-1}(C_p)_\text{sch}$ to be Cohen-Macaulay for any $p\in\PP_n$, we must ask the divisors $D_i$ to meet transversally: for any $x\in X$, the divisors $D_i$ passing through $x$ meet transversally at $x$ (i.e., the local equations of these $D_i$ at $x$ form a regular sequence). Now the question is: if $D_i$ meet transversally, does Reisner criterion hold? Very probably, the answer  is no with this generality. Some natural questions would be:

(1) what further hypothesis (on $X$ and on the divisors $D_i$) are necessary to be assumed such that Reisner criterion   holds?

(2)  what simplicial complexes $K$ of $\PP_n$ satisfy that $\pi^{-1}(K)_\text{sch}$ is Cohen-Macaulay for any $X$ and any transversal divisors $D_1,\dots D_n$ (assuming for instance that $n=\dim X$)?

In our opinion, the natural way to relate the Cohen-Macaulicity of $K$ to that of $\pi^{-1}(K)_\text{sch}$ is via duality, at least for proper schemes, in the following way. Let $D_K$ be the dualizing complex of $K$, which is a complex of sheaves of $k$-vector spaces on $K$. On the other hand, let us denote $X_K=\pi^{-1}(K)_\text{sch}$ and let $D_{X_K}$ be the dualizing complex of $X_K$, which is a complex of quasi-coherent (in fact, coherent) $\OO_{X_K}$-modules on $X$. Let $\OO=\pi_*\OO_{X_K}$, which is a sheaf of $k$-algebras on $K$. Then, the category of quasi-coherent modules on $X_K$ is equivalent to the category of quasi-coherent modules on the finite ringed space $(K,\OO)$.  In particular, the dualizing complex $D_{X_K}$ agrees (via this equivalence) with the dualizing complex of $(K,\OO)$. Now, one has a natural morphism of ringed spaces
\[ I\colon (K,\OO)\to (K,k)\] which is given by the identity at the topological level and the natural morphism of sheaves $k\to\OO$. Despite the validity of Reisner criterion, we think that it is worthy to study how the morphism $I$ relates the dualizing complex   $D_K$ (which is the dualizing complex in the category of sheaves of  modules  on $(K,k)$) to the dualizing complex of $(K,\OO)$ (which is the dualizing complex in the category of quasi-coherent modules on $(K,\OO)$). This will be done in a future paper.

\section{Non schematic ringed spaces}	

This section aims to give elementary examples of functors on ringed spaces that are representable by finite ringed spaces which are not schematic (see \cite{Sancho4}) and hence they are not the finite ringed space associated to some affine covering of a scheme (in contrast with $\AAA^n_k$, $\Pj^n_k$ or $\Grass_k(r,n)$). We leave the proofs to the reader, since they are quite elementary.

One of the most elementary and natural functors to be considered on ringed spaces, by the very definition of ringed space, is the following: Let $S$ be a ringed space over $k$, and let us consider couples $(U,f)$ where $U$ is an open subset of $S$ and $f\in\OO_S(U)$ is an angebraic function on $U$. If $h\colon S'\to S$ is a morphism of ringed spaces, then $(h^{-1}(U), h^*(f))$ is a couple on $S'$. Thus, we have a functor 
\[ \aligned F\colon\{\text{Ringed spaces over }k\}&\to \{\text{Sets}\}\\ S&\mapsto F(S)=\{ (U,f): U\in\Open(S), f\in\OO_S(U)\}\endaligned\]

\begin{prop} $F$ is representable by the ringed space over $k$: $$(\PP_1,\OO),$$  where $\OO$ is the sheaf of rings on $\PP_1=\{0<1\}$ defined as follows:
\[ \OO_0=k,\qquad \OO_1=k[x]\] and the morphism $\OO_0\to\OO_1$ is the natural inclusion.
\end{prop}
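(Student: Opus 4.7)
The plan is to unpack what a morphism of ringed spaces $h\colon S\to (\PP_1,\OO)$ over $k$ consists of and check that it records exactly the data of an open subset $U\subseteq S$ together with an element $f\in\OO_S(U)$.

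First I would handle the topological part. By Remark \ref{top}(1) or the $n=1$ case of Theorem \ref{P_n}, a continuous map $\kappa\colon S\to\PP_1$ is the same datum as an open subset $U=\kappa^{-1}(1)$ of $S$. Next, writing out the Alexandrov topology of $\PP_1=\{0<1\}$, the nonempty open subsets are $\PP_1$ and $\{1\}$, with $\OO(\PP_1)=\OO_0=k$, $\OO(\{1\})=\OO_1=k[x]$, and the restriction map being the natural inclusion $k\hookrightarrow k[x]$. Therefore a morphism of sheaves of $k$-algebras $\kappa_\#\colon \OO\to\kappa_*\OO_S$ is equivalent to a commutative square
\[
\xymatrix{ k \ar[r]\ar[d] & \OO_S(S)\ar[d] \\ k[x]\ar[r] & \OO_S(U) }
\]
of $k$-algebra homomorphisms, where the right vertical arrow is the restriction. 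The top horizontal arrow is forced to be the unit structure of the $k$-algebra $\OO_S(S)$, hence carries no information; the bottom horizontal arrow is a $k$-algebra map $k[x]\to\OO_S(U)$ which, by the universal property of the polynomial ring, is equivalent to giving one element $f\in\OO_S(U)$ (the image of $x$). The commutativity of the square is then automatic because both compositions $k\to\OO_S(U)$ coincide with the structural morphism.

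This yields a bijection
\[
\Hom_k(S,(\PP_1,\OO))\;\longleftrightarrow\;\{(U,f):U\in\Open(S),\; f\in\OO_S(U)\}=F(S).
\]
Functoriality in $S$ is routine: for $h\colon S'\to S$ the associated continuous map pulls back to $h^{-1}(U)$, and the $k$-algebra map $k[x]\to\OO_S(U)\to\OO_{S'}(h^{-1}(U))$ sends $x$ to $h^*(f)$, matching the action of $F$ on the pair $(h^{-1}(U),h^*(f))$. The universal object on $(\PP_1,\OO)$ is the couple $(\{1\},x)$, with $x\in\OO(\{1\})=k[x]$.

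There is no real obstacle here; the only point requiring mild care is the bookkeeping of the Alexandrov opens of $\PP_1$ and verifying that the two $k$-algebra maps forced on the square are compatible, both of which reduce to the canonical $k$-algebra structure on $\OO_S$.
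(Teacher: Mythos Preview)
Your argument is correct. The paper itself leaves this proof to the reader, and what you wrote is precisely the straightforward verification one would expect, mirroring the paper's earlier proof that $\AAA^1_k$ represents $\bA$: unwind a morphism into $(\PP_1,\OO)$ as a continuous map (i.e.\ an open $U$) plus compatible $k$-algebra maps on the two opens of $\PP_1$, and observe that the only nontrivial datum is the image of $x$ in $\OO_S(U)$.
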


Now let us consider the functor $F^n=F\times\overset n\cdots\times F$, which assigns to each ringed space $S$ the set of $n$ open subsets of $S$ and $n$ algebraic functions on them. Then:
\begin{cor} $F^n$ is representable by the ringed space
\[ (\PP_n,\OO)\] where $\OO$ is the sheaf of rings on $\PP_n$ defined as follows: for each $\delta=\{i_1,\dots,i_r\}\in\PP_n$, $\OO_\delta=k[x_{i_1},\dots, x_{i_r}]$ and for each $\delta\leq \delta'$, the morphism $\OO_\delta\to\OO_{\delta'}$ is the natural inclusion.
\end{cor}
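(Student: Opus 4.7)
The plan is to deduce the corollary from the preceding proposition by taking an $n$-fold fiber product over $(*,k)$. Since $F^n$ is by definition $F\times\overset n\cdots\times F$, and fiber products of representable functors are represented by the fiber products of the representing objects, it suffices to show that the $n$-fold fiber product of $(\PP_1,\OO^{(1)})$ over $(*,k)$ (where $\OO^{(1)}$ is the sheaf of rings from the previous proposition, with $\OO^{(1)}_0=k$ and $\OO^{(1)}_1=k[x]$) is canonically isomorphic to the ringed space $(\PP_n,\OO)$ described in the statement.

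First I would handle the topological factor. Using Remark 1.5 item (3), one has $\PP_n=\PP_1\times\overset n\cdots\times \PP_1$ as topological spaces; this provides the underlying space of the fiber product. Next, I would compute the structure sheaf using the general formula
\[\OO_{S\times_T S'}=\pi_S^{-1}\OO_S\otimes_{\pi_T^{-1}\OO_T}\pi_{S'}^{-1}\OO_{S'}\]
recalled in the definition of fiber products of ringed spaces. Since $(*,k)$ is punctual, the tensor product is just over the constant sheaf $k$, and since the factors $\PP_1$ have finitely many points with a distinguished basis of open sets $U_p$ (the smallest open containing $p$), one can compute stalks directly. At a point $\delta=(\varepsilon_1,\dots,\varepsilon_n)\in\PP_1^n$, the stalk of the fiber-product sheaf is the $k$-tensor product of the stalks $\OO^{(1)}_{\varepsilon_i}$. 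If we identify $(\varepsilon_1,\dots,\varepsilon_n)\in\PP_1^n$ with the subset $\delta=\{i:\varepsilon_i=1\}\in\PP_n$, then
\[ \bigotimes_{i=1}^{n} \OO^{(1)}_{\varepsilon_i} \;=\; \bigotimes_{i\in\delta} k[x_i] \otimes \bigotimes_{i\notin\delta} k \;=\; k[x_{i_1},\dots,x_{i_r}],\]
which is precisely $\OO_\delta$ as defined in the statement. The transition maps for $\delta\leq\delta'$ arise from the tensor product of the transition maps $k\hookrightarrow k[x]$ in the factors where $\varepsilon_i$ changes from $0$ to $1$, producing the asserted natural inclusion $k[x_{i_1},\dots,x_{i_r}]\hookrightarrow k[x_{j_1},\dots,x_{j_s}]$.

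Finally, the universal property of the fiber product in the category of ringed spaces over $k$ gives
\[ \Hom_k(S,(\PP_n,\OO))=\Hom_k(S,(\PP_1,\OO^{(1)}))\times\overset n\cdots\times \Hom_k(S,(\PP_1,\OO^{(1)}))=F(S)^n=F^n(S),\]
establishing representability. The only mildly technical step is verifying that the stalks of the tensor-product sheaf on $\PP_1^n$ really are the claimed rings with the claimed transition maps; this is straightforward because each $\PP_1$ is a two-point sober poset whose sheaves are determined by their values on the two basic opens, so the tensor product of such sheaves on $\PP_1^n$ is likewise determined pointwise. Once this compatibility is checked, no further obstacle remains.
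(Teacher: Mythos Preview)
Your proof is correct and follows precisely the approach the paper intends: the statement is labeled a corollary of the preceding proposition, and the paper explicitly leaves the proofs in this section to the reader as elementary, so the deduction via the $n$-fold fiber product of $(\PP_1,\OO^{(1)})$ over $(*,k)$---using $\PP_n=\PP_1\times\cdots\times\PP_1$ and the tensor-product description of the structure sheaf---is exactly what is expected.
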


Finally, let us  consider the following subfunctor $H$ of $F^n$:
\[ H(S)=\left\{ (U_1,f_1),\dots,(U_n,f_n)\in F^n(S):\left\{\aligned &  X=U_1\supseteq U_2\supseteq\dots\supseteq U_n,\\  &  {f_i}_{\vert U_{i+1}} \text{ is invertible, for any }i=1,\dots, n-1\endaligned\right\} \right\}\]

\begin{prop} $H$ is representable by the ringed space
\[ (\Delta_n^\text{\rm top},\OO),\] where $\Delta_n^\text{\rm top}$ is the totally ordered poset $\{1<2<\dots<n\}$ and $\OO$ is the sheaf of rings on it defined by:
\[ \OO_1=k[x_1],\quad \OO_2=k[x_1,x_2]_{x_1},\quad \dots \quad, \quad \OO_n=k[x_1,\dots,x_n]_{x_1\cdot\dots\cdot x_{n-1}}\] and $\OO_i\to\OO_{i+1}$ is the natural inclusion.
\end{prop}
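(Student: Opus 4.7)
The plan is to unpack a morphism of ringed spaces $h\colon S\to (\Delta_n^{\text{top}},\OO)$ into its topological part and its algebraic part, and show each factor matches one of the data in $H(S)$.

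First I would analyze continuous maps $h\colon S\to \Delta_n^{\text{top}}$. Since the open subsets of the totally ordered poset $\{1<2<\dots<n\}$ are exactly the upper sets $W_i:=\{i,i+1,\dots,n\}$ (together with $\emptyset$), a continuous map is determined by the decreasing chain of open subsets
\[ V_1\supseteq V_2\supseteq\dots\supseteq V_n,\qquad V_i:=h^{-1}(W_i),\]
with the normalization $V_1=S$ automatic since $W_1=\Delta_n^{\text{top}}$. Conversely, any such chain comes from the continuous map $s\mapsto \max\{i: s\in V_i\}$. This bijection produces the topological part of the data $(U_i,f_i)\in H(S)$ with $U_i=V_i$.

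Next I would unpack the sheaf morphism $h^{\#}\colon \OO\to h_*\OO_S$ of sheaves of $k$-algebras. Because each $i\in\Delta_n^{\text{top}}$ has the smallest open neighbourhood $W_i$, giving $h^{\#}$ is equivalent to giving, for each $i$, a $k$-algebra map
\[ \phi_i\colon \OO_i=k[x_1,\dots,x_i]_{x_1\cdots x_{i-1}}\longrightarrow \OO_S(V_i)\]
such that the obvious squares induced by the inclusions $\OO_i\hookrightarrow\OO_{i+1}$ and $V_{i+1}\subseteq V_i$ commute. Writing $f_j:=\phi_j(x_j)\in\OO_S(V_j)$, compatibility forces $\phi_i(x_j)=f_j\mid_{V_i}$ for $j\leq i$, so the collection $(\phi_i)$ is completely determined by $(f_1,\dots,f_n)$. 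The only remaining constraint is that $\phi_{i+1}$ be well defined, namely that $x_i$ (which is invertible in $\OO_{i+1}$) be sent to a unit of $\OO_S(V_{i+1})$; this is exactly the condition $f_i\mid_{V_{i+1}}\in \OO_S(V_{i+1})^\times$. Thus morphisms $h\colon S\to (\Delta_n^{\text{top}},\OO)$ correspond bijectively to tuples $((V_1,f_1),\dots,(V_n,f_n))\in H(S)$.

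Finally I would verify functoriality: a morphism $g\colon S'\to S$ sends the chain $(V_i)$ to $(g^{-1}(V_i))$ and each $f_i$ to $g^{*}f_i$, which matches the natural functorial structure on $H$. The main (and only nontrivial) point to keep in mind is the duality between the order on the poset and the inclusion order of the $V_i$: because $W_i\supseteq W_{i+1}$ the preimages form a \emph{decreasing} chain, and the localization in $\OO_i$ by $x_1\cdots x_{i-1}$ (rather than something depending on $x_i$) is what encodes the invertibility of $f_{j}\mid_{V_{j+1}}$ for $j<i$ without imposing any invertibility of $f_i$ itself on $V_i$. Everything else is bookkeeping.
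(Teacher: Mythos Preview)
Your argument is correct and is precisely the direct verification the paper has in mind: the authors explicitly omit the proof (``We leave the proofs to the reader, since they are quite elementary''), and your unpacking of a morphism into the chain $V_i=h^{-1}(W_i)$ together with the algebra maps $\phi_i$ determined by $f_i=\phi_i(x_i)$ is the natural and expected route. There is nothing to compare against; your proof would serve perfectly well as the omitted one.
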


\begin{rem} The ringed spaces $(\PP_n,\OO)$ and $(\Delta_n^\text{\rm top},\OO)$  are not schematic, hence they are not the finite ringed space associated to  a finite affine covering of a scheme.
\end{rem}

\end{document}